\newtheorem{theorem}{Theorem}
\newtheorem{proposition}[theorem]{Proposition}
\newtheorem{lemma}[theorem]{Lemma}
\newtheorem{corollary}[theorem]{Corollary}
\theoremstyle{remark}
\theoremstyle{definition}
\newtheorem{remark}[theorem]{Remark}
\newtheorem{definition}[theorem]{Definition}
\numberwithin{equation}{section}
\numberwithin{theorem}{section}
\newcommand\set[1]{\left\{\,#1\,\right\}}
\newcommand\abs[1]{\left|#1\right|}
\newcommand\norm[1]{\left\Vert#1\right\Vert}
\newcommand{\R}{\mathbb{R}}
\newcommand{\T}{\mathbb{T}}
\renewcommand{\P}{\mathbb{P}}
\newcommand{\cC}{{\mathcal C}}
\newcommand{\cW}{{\mathcal W}}
\DeclareMathOperator{\id}{id}
\DeclareMathOperator{\supp}{supp}
\DeclareMathOperator{\tr}{tr}
\DeclareMathOperator{\divv}{div}
\DeclareMathOperator{\Lip}{Lip}
\DeclareMathOperator*{\essinf}{ess\,inf}
\DeclareMathOperator*{\esssup}{ess\,sup}
\begin{document}

\title{On the energy-constrained optimal mixing problem for one-dimensional initial configurations}
\author{Bj\"orn Gebhard}
\date{}
\maketitle

\begin{abstract}
We consider the problem of mixing a passive scalar in a periodic box by incompressible vector fields subject to a fixed energy constraint. In that setting a lower bound for the time in which perfect mixing can be achieved has been given by Lin, Thiffeault, Doering \cite{Lin_Thiffeault_Doering_2011}. While examples by Depauw \cite{Depauw} and Lunasin et al. \cite{Lunasin_etal_2012} show that perfect mixing in finite time is indeed possible, the question regarding the sharpness of the lower bound from \cite{Lin_Thiffeault_Doering_2011} remained open. In the present article we give a negative answer for the special class of initial configurations depending only on one spatial coordinate. 
The new lower bound holds true for distributional solutions satisfying only the uniform energy constraint for the velocity field and a weak compatibility condition for the passive scalar coming from the transport equation. In that weak setting we also provide an example for which the new bound is sharp. 
As a new ingredient in the investigation of optimal mixing we utilize the convex hull inequalities of the transport equation with constraints when seen as a differential inclusion.
\end{abstract}

\section{Introduction}

Let $\T^d$, $d\geq 2$ be the $d$-dimensional flat torus with side length $L>0$. On $\T^d$ we consider the transport equation
\begin{align}\label{eq:transport}
\partial_t\rho +v\cdot\nabla \rho=0
\end{align}
with respect to a fixed non-constant essentially bounded initial distribution $\rho_0$ and divergence-free velocity fields $v$. Passing from $\rho_0\in L^\infty(\T^d)$ non-constant to $\tilde{\rho}_0$ defined by
\begin{align*}
\tilde\rho_0=\frac{\rho_0-\fint\rho_0}{\norm{\rho_0-\fint\rho_0}_{L^\infty(\T^d)}},
\end{align*} 
we will without loss of generality assume that the initial datum satisfies
\begin{align}\label{eq:rho_0_conditions}
\fint\rho_0=0,\quad \norm{\rho_0}_{L^\infty(\T^d)}= 1.
\end{align}
Here $\fint\rho_0$ denotes the average of $\rho_0$, i.e.
\[
\fint \rho_0:=L^{-d}\int_{\T^d}\rho_0(x)\:dx.
\]

We are interested in incompressible velocity fields $v$ that are smooth on some (possibly unbounded) time interval $[0,T)\subset [0,\infty)$, satisfy the energy constraint 
\begin{align}\label{eq:energy_constraint}
\int_{\T^d}\abs{v(t,x)}^2\:dx\leq E,\quad t\in[0,T)
\end{align} 
for some fixed $E>0$, and that perfectly mix the initial distribution $\rho_0$ as $t\rightarrow T$. The latter means that the (unique) solution $\rho(t,x)$ of \eqref{eq:transport} with initial condition $\rho_0$ satisfies
\begin{align}\label{eq:mixing_at_time_T}
\lim_{t\rightarrow T}\norm{\rho(t,\cdot)}_{H^{-1}(\T^d)}=0.
\end{align}
The $H^{-1}$-norm is a frequently used measure for mixing called the functional mixing scale. It has been shown that decay of any negative Sobolev norm $\norm{\cdot}_{H^{-s}}$ to $0$ is equivalent to mixing in the sense of ergodic theory, see \cite{mathew_mezic_petzold} for $s=1/2$ and \cite{Lin_Thiffeault_Doering_2011} for general $s>0$. In view of that we equivalently could have replaced \eqref{eq:mixing_at_time_T} just by weak $L^2$-convergence $\rho(t,\cdot)\rightharpoonup 0$ as $t\rightarrow T$. However, we directly stated the condition of perfect mixing in terms of $\norm{\cdot}_{H^{-1}}$, because we will also use it to evaluate the progress of mixing before $T$. For other mixing scales and their comparison to the functional scale we refer to \cite{Bressan,Lunasin_etal_2012,Thiffeault_survey_mixing_scales,Zillinger_comparison_mixing_scales}.

To be precise we collect the required conditions in the following definition.
\begin{definition}\label{def:perfectly_mixing_velocity_fields}
Let $\rho_0\in L^\infty(\T^d)$ satisfy \eqref{eq:rho_0_conditions} and $T\in(0,\infty]$. We say that a velocity field $v$ is perfectly mixing $\rho_0$ at time $T$ if and only if $v\in \cC^\infty([0,T)\times\T^d;\R^d)$, $v$ satisfies the energy constraint \eqref{eq:energy_constraint}, $\divv v(t,\cdot)=0$ for all $t\in[0,T)$, and the function $\rho\in L^\infty((0,T)\times\T^d)$ defined through the requirement
\begin{align}\label{eq:distr_solution}
\int_0^T\int_{\T^d}\rho\partial_t\psi+\rho v\cdot\nabla \psi\:dx\:dt+\int_{\T^d}\rho_0\psi(0,\cdot)\:dx=0
\end{align}
for all $\psi\in\cC^\infty_c([0,T)\times\T^d)$ satisfies \eqref{eq:mixing_at_time_T}.
\end{definition}
We remark that \eqref{eq:distr_solution} states that $\rho$ is a weak solution of \eqref{eq:transport} considered with $v$. In fact, by the assumed regularity for $v$ there exists only one such solution, which is given in terms of transport through the Lagrangian flow induced by $v$. Due to the regularity of $v$ the time $T$ is the first time for which \eqref{eq:mixing_at_time_T} holds true. However, our analysis will not rely on the requested smoothness of $v$. It also holds true in a weak setting where uniqueness of solutions is lost, see Section \ref{sec:weak_setting}.

Note also that the discussion in Appendix \ref{sec:appendix_lipschitz} implies that $\norm{\rho(t,\cdot)}_{H^{-1}(\T^d)}$ is well-defined for any $t\in[0,T)$ and continuous with respect to time.

The energy constraint optimal mixing problem consists of finding velocity fields $v$ that perfectly mix $\rho_0$ in the least possible time. Next we abstractly define the optimal mixing time, which we seek to characterize in a more specific way.
\begin{definition}\label{def:optimal_mixing_time}
Given $\rho_0\in L^\infty(\T^d)$ satisfying \eqref{eq:rho_0_conditions}, the optimal mixing time $T_{mix}(\rho_0)$ is defined as the infimum of all $T>0$ with the property that there exists a velocity field $v$ that perfectly mixes $\rho_0$ at time $T$.
\end{definition}

\subsection{Lower bounds}

Apriori $T_{mix}(\rho_0)\in[0,+\infty]$. However, examples for perfect mixing in finite time have been given by Depauw \cite{Depauw} and Lunasin et al. \cite{Lunasin_etal_2012}.

On the other hand a lower bound for the optimal mixing time has been derived by Lin, Thiffeault and Doering \cite{Lin_Thiffeault_Doering_2011}, and also in \cite{Lunasin_etal_2012}. It states that
\begin{align}\label{eq:first_lower_bound}
T_{mix}(\rho_0)\geq E^{-\frac{1}{2}}\norm{\rho_0}_{H^{-1}(\T^d)}.
\end{align}
Indeed, if we suppose that $\rho$ is the solution induced by a velocity field $v$ that is perfectly mixing $\rho_0$ at time $T$ and $\varphi$ is defined via $\Delta\varphi =\rho$, then we (for now formally) compute
\begin{align}\begin{split}\label{eq:computation_first_lower_bound}
    \frac{d}{dt}\norm{\rho(t,\cdot)}_{H^{-1}(\T^d)}^2&=\frac{d}{dt}\norm{\nabla\varphi(t,\cdot)}_{L^2(\T^d)}^2=-2\int_{\T^d}\rho(t,x)v(t,x)\cdot\nabla\varphi(t,x)\:dx\\
    &\geq -2\sqrt{E}\norm{\rho(t,\cdot)}_{H^{-1}(\T^d)},
    \end{split}
\end{align}
and thus
\begin{align*}
0=\lim_{t\rightarrow T}\norm{\rho(t,\cdot)}_{H^{-1}(\T^d)}\geq \norm{\rho_0}_{H^{-1}(\T^d)}-\sqrt{E}T,
\end{align*}
implies \eqref{eq:first_lower_bound}. We remark that this computation can be made rigorous, for instance via Lemma \ref{lem:mixing_norm_Lipschitz} in Appendix \ref{sec:appendix_lipschitz}, i.e. the lower bound \eqref{eq:first_lower_bound} holds for any $\rho_0$ satisfying \eqref{eq:rho_0_conditions}.

As mentioned above this estimate can be found in the paper of Lin, Thiffeault, Doering \cite{Lin_Thiffeault_Doering_2011} and also in the aforementioned article of Lunasin et al. \cite{Lunasin_etal_2012}. Regarding the sharpness of the bound Lin, Thiffeault and Doering remark the following, \cite[p. 469]{Lin_Thiffeault_Doering_2011}:
\emph{``Whether or not this limiting mixing rate can actually be achieved, or even approached, by any suitably constrained stirring flow remains to be seen.''}

The motivation of the present paper is to shed some light onto that. For the special class of one-dimensional initial data it turns out that \eqref{eq:first_lower_bound} is in fact not sharp as the following statement shows.
\begin{theorem}\label{thm:lower_bound_1D}
Let $\rho_0\in L^\infty(\T^d)$ satisfy \eqref{eq:rho_0_conditions} and assume that $\rho_0$ depends only on one space-variable. Then 
$\norm{\rho_0}_{H^{-1}(\T^d)}\in [0, H_{max}]$ with $H_{max}:=L^{\frac{d+2}{2}}/\sqrt{48}$ and there holds
\begin{align}\label{eq:new_lower_bound_1D}
T_{mix}(\rho_0)\geq E^{-\frac{1}{2}}H_{max}\sqrt{2}S(\alpha(r_0)),
\end{align}
where $r_0:=\norm{\rho_0}_{H^{-1}(\T^d)}/H_{max}\in[0,1]$ and 
\begin{gather*}
S(\alpha):=\frac{1}{2}\left(\arcsin(\alpha)+\alpha\sqrt{1-\alpha^2}\right),\quad \alpha(r):=\left(1-\sqrt{1-r^2}\right)^\frac{1}{2}.
\end{gather*}
\end{theorem}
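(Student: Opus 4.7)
The strategy is to sharpen the naive inequality $|H'(t)|\leq\sqrt E$ (underlying \eqref{eq:first_lower_bound}) for $H(t):=\|\rho(t,\cdot)\|_{H^{-1}(\T^d)}$ to a nonlinear bound reflecting the constraints inherited from the one-dimensional $\rho_0$. As a preliminary step one establishes the a priori estimate $\|\rho_0\|_{H^{-1}(\T^d)}\leq H_{max}$: writing $\rho_0(x)=\bar\rho_0(x_1)$ with $\bar\rho_0\in L^\infty(\T)$, $\fint\bar\rho_0=0$, $|\bar\rho_0|\leq 1$, separation of variables for $\Delta^{-1}$ gives $\|\rho_0\|_{H^{-1}(\T^d)}^2=L^{d-1}\|\bar\psi\|_{L^2(\T)}^2$ with $\bar\psi$ the $L$-periodic, mean-zero antiderivative of $\bar\rho_0$; since $\|\bar\psi'\|_{L^\infty}\leq 1$, a standard extremal argument (the maximizer is the triangle wave of slopes $\pm 1$) bounds $\|\bar\psi\|_{L^2}^2$ by $L^3/48$, so that $r_0\in[0,1]$.

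The core refined inequality I aim for is
\begin{equation*}
|H'(t)|^2\leq 2E\,\frac{\sqrt{1-r(t)^2}}{1+\sqrt{1-r(t)^2}},\qquad r(t):=H(t)/H_{max}.
\end{equation*}
To derive it, rewrite $H(t)H'(t)=-\int_{\T^d}\rho v\cdot\nabla\varphi\,dx=-\int_{\T^d} v\cdot P(\rho\nabla\varphi)\,dx$ using $\Div v=0$, where $P$ denotes the Leray projector onto divergence-free fields. Cauchy--Schwarz and the energy bound give $|HH'|^2\leq E\|P(\rho\nabla\varphi)\|_{L^2}^2=E(\|\rho\nabla\varphi\|_{L^2}^2-\|\nabla q\|_{L^2}^2)$ via the Helmholtz decomposition $\rho\nabla\varphi=P(\rho\nabla\varphi)+\nabla q$. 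Equimeasurability of $\rho(t,\cdot)$ with $\rho_0$ (a consequence of transport by an incompressible field) yields $\|\rho(t,\cdot)\|_{L^\infty}\leq 1$ and hence $\|\rho\nabla\varphi\|_{L^2}^2\leq H(t)^2$, reducing the refined inequality to the lower estimate
\begin{equation*}
\|\nabla q(t)\|_{L^2}\geq H_{max}-\sqrt{H_{max}^2-H(t)^2}.
\end{equation*}
This is precisely where the convex-hull inequalities for the transport equation viewed as a differential inclusion enter: the non-solenoidal component of the flux $\rho\nabla\varphi$ cannot be arbitrarily small on the set of admissible flux-density pairs satisfying the energy constraint and equimeasurability with the 1D datum $\rho_0$, with equality achieved on an explicit family of extremal configurations inherited from the one-dimensional structure of $\rho_0$.

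Once the differential inequality is established, one integrates it via the substitution $H=H_{max}\sin 2\theta$, $\theta\in[0,\pi/4]$, which reduces it to $|\theta'|\cos\theta\sqrt{\cos 2\theta}\leq\sqrt E/(2H_{max})$. Integrating from $t=0$ (where $\sin 2\theta_0=r_0$) to $t=T$ (where $\theta=0$), and applying the further substitution $\sin\psi=\sqrt 2\sin\theta$, gives
\begin{equation*}
\frac{\sqrt E\,T}{2H_{max}}\geq\int_0^{\theta_0}\cos\theta\sqrt{\cos 2\theta}\,d\theta=\frac{1}{2\sqrt 2}\bigl(\arcsin\alpha_0+\alpha_0\sqrt{1-\alpha_0^2}\bigr)=\frac{S(\alpha(r_0))}{\sqrt 2},
\end{equation*}
which rearranges to \eqref{eq:new_lower_bound_1D}. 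The main obstacle is the lower bound on $\|\nabla q\|$ --- this is where the convex-hull / differential-inclusion input is decisive, and where equimeasurability with the 1D $\rho_0$ (in particular the resulting uniform bound $\|\rho(t,\cdot)\|_{H^{-1}(\T^d)}\leq H_{max}$) becomes essential. The remaining steps --- the one-dimensional rearrangement for $H_{max}$ and the ODE integration --- reduce to routine single-variable calculus.
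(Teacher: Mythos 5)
Your overall architecture is right in spirit (sharpen the linear rate bound to a nonlinear one and integrate it; your substitution $H=H_{max}\sin 2\theta$ and the final calculus are correct and reproduce the paper's bound), and your preliminary estimate $\norm{\rho_0}_{H^{-1}(\T^d)}\leq H_{max}$ for one-dimensional data matches the paper's Lemma on the extremality of the triangle wave. But the proof has a genuine gap at its core: the inequality
\[
\norm{\nabla q(t)}_{L^2}\geq H_{max}-\sqrt{H_{max}^2-H(t)^2},
\]
for the gradient part of the Helmholtz decomposition of $\rho\nabla\varphi$ of the \emph{full $d$-dimensional} density, is simply asserted, with a vague appeal to ``convex-hull inequalities''. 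This is not what the convex hull of $K$ gives: in the paper the hull inequality is $m_1^2\leq e(1-\rho^2)$ for the \emph{vertically averaged} momentum, energy and density (Lemma \ref{lem:convex_hull_with_zero_velocity}), and it says nothing about the solenoidal/gradient splitting of $\rho\nabla\varphi$. Your claimed bound is a nontrivial multi-dimensional variational statement: restricted to one-dimensional profiles it is exactly the paper's $\inf_{X_h}F_1=h_{max}^2\bigl(1-\sqrt{1-h^2/h_{max}^2}\bigr)^2$ (Lemmas \ref{lem:reduced_var_problem}, \ref{lem:reduction_of_actual_variational_problem}), proved there by a symmetric decreasing rearrangement plus an odd rearrangement and convexity --- tools that are genuinely one-dimensional. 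You would additionally have to show that no genuinely $d$-dimensional configuration (equimeasurable with $\rho_0$) can make $\rho\nabla\varphi$ closer to solenoidal at fixed $H(t)$, e.g.\ via a quantitative stability argument near the 1D extremals; nothing of that sort is sketched. A second, related unproven ingredient is the bound $H(t)\leq H_{max}$ for all $t>0$: at positive times $\rho(t,\cdot)$ is no longer one-dimensional, and the paper only proves the $H^{-1}$ bound $h_{max}$ on $\T$ (Corollary \ref{cor:estimate_Hminus1_norm}); without $H(t)\leq H_{max}$ your differential inequality is not even well formed.

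The paper avoids both issues by a different reduction: it passes to the vertical averages $(\bar\rho,\bar v,\bar m,\bar e)$, which leave the 1D initial datum invariant, notes $\norm{\rho(t,\cdot)}_{H^{-1}(\T^d)}\geq\norm{\bar\rho(t,\cdot)}_{H^{-1}(\T^d)}$ (Lemma \ref{lem:properties_of_averaged_solutions}), removes the bulk drift (Lemma \ref{lem:averaged_zero_velocity}), and then derives the ODE inequality \eqref{eq:ode_inequaltiy} \emph{for the averaged norm only}, where the required variational infimum is a tractable one-dimensional problem. Your proposed ODE for the full norm $H(t)$ is a strictly stronger statement than what the paper proves and than what is needed; to salvage your route you would either have to prove the multi-dimensional $\nabla q$ bound (and the multi-dimensional $H_{max}$ bound), or insert the averaging step, at which point you are back to the paper's argument.
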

\begin{remark}\label{rem:lower_bound_is_improvement} This is indeed an improvement compared to the previous lower bound \eqref{eq:first_lower_bound}: Rewriting \eqref{eq:first_lower_bound} in terms of the ratio $r_0=\norm{\rho_0}_{H^{-1}(\T^d)}/H_{max}$ the gap between the right-hand sides of \eqref{eq:new_lower_bound_1D} and \eqref{eq:first_lower_bound} is given by 
\begin{align}\label{eq:difference_of_bounds}
E^{-\frac{1}{2}}H_{max}\left(\sqrt{2}S(\alpha(r_0))-r_0\right)=:E^{-\frac{1}{2}}H_{max}\delta(r_0).
\end{align}
Clearly $\delta(0)=0$. We will show that $\delta'(r_0)>0$ for $r_0\in(0,1)$. For that it is more convenient to invert $\alpha(r)$, which is increasing on $(0,1)$, and to consider $r_0$ as a function of $\alpha_0:=\alpha(r_0)$, i.e. $r_0=\alpha_0\sqrt{2-\alpha_0^2}$. Then
\begin{align*}
\frac{d}{d\alpha_0}\left(\sqrt{2}S(\alpha_0)-\alpha_0\sqrt{2-\alpha_0^2}\right)=\sqrt{2-2\alpha_0^2}-\sqrt{2-\alpha_0^2}+\frac{\alpha_0^2}{\sqrt{2-\alpha_0^2}}>0
\end{align*}
for $\alpha_0\in(0,1)$.
\end{remark}

\subsection{Weak setting}\label{sec:weak_setting}
We emphasize that the proof of Theorem \ref{thm:lower_bound_1D} is not relying on the smoothness of the velocity $v$. In fact, as Theorem \ref{thm:weak_setting} below shows, the lower bound holds for any energy admissible distributional solution as long as the density $\rho$ satisfies condition \eqref{eq:condition_rho_weak_setting} which, recall that $\norm{\rho_0}_{L^\infty(\T^d)}=1$, is a weak compatibility condition with the transport equation.
\begin{theorem}\label{thm:weak_setting}
Let $\rho_0$ be as in Theorem \ref{thm:lower_bound_1D} and $T>0$. Suppose that $\rho\in L^\infty((0,T)\times\T^d)$, $v\in L^\infty(0,T;L^2(\T^d;\R^d))$ with 
\begin{gather}\label{eq:condition_energy_weak_setting}
\norm{v}_{L^\infty(0,T;L^2(\T^d;\R^d))}\leq \sqrt{E},\\
\label{eq:condition_rho_weak_setting}
\norm{\rho}_{L^\infty((0,T)\times\T^d)}\leq 1
\end{gather}
solve the system 
\begin{align}\label{eq:equations_distributional}
\begin{cases}\partial_t\rho+\divv(\rho v)=0,&\text{on }(0,T)\times\T^d,\\
\divv v=0,&\text{on }(0,T)\times\T^d,\\
\rho(0,\cdot)=\rho_0,&\text{at }t=0
\end{cases}
\end{align}
in the sense of distributions. Suppose further that $\norm{\rho(t,\cdot)}_{H^{-1}(\T^d)}\rightarrow 0$ as $t\rightarrow T$. Then $T\geq E^{-\frac{1}{2}}H_{max}\sqrt{2}S(\alpha(r_0))$ with $H_{max},S,\alpha,r_0$ as in Theorem \ref{thm:lower_bound_1D}.
\end{theorem}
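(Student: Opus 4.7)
The plan is to reduce the $d$-dimensional problem to a one-dimensional continuity equation via transverse averaging, then exploit the ``convex hull inequalities'' alluded to in the introduction to refine the differential inequality for $\|\bar\rho(t)\|_{H^{-1}(\T)}$. Assume without loss of generality that $\rho_0=\rho_0(x_1)$, and set
\begin{align*}
\bar\rho(t,x_1)&:=L^{-(d-1)}\int_{\T^{d-1}}\rho(t,x_1,x_\perp)\,dx_\perp,\\
\bar m(t,x_1)&:=L^{-(d-1)}\int_{\T^{d-1}}(\rho v_1)(t,x_1,x_\perp)\,dx_\perp.
\end{align*}
Integrating \eqref{eq:equations_distributional} in $x_\perp$ (the transverse divergence contributions vanish by periodicity) yields $\partial_t\bar\rho+\partial_{x_1}\bar m=0$ on $(0,T)\times\T$ in the distributional sense, with $\bar\rho(0,\cdot)=\rho_0$. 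Fourier truncation to modes with zero transverse wavenumber shows $L^{d-1}\|\bar\rho(t)\|_{H^{-1}(\T)}^2=\|\bar\rho(t)\|_{H^{-1}(\T^d)}^2\le\|\rho(t)\|_{H^{-1}(\T^d)}^2$, so perfect mixing of $\rho$ transfers to $\|\bar\rho(t)\|_{H^{-1}(\T)}\to 0$. It therefore suffices to prove the analogous one-dimensional lower bound with $E$ replaced by $E_1:=E/L^{d-1}$ and $H_{max}$ by $H_{max,1}:=L^{3/2}/\sqrt{48}$.

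The decisive refinement of the energy constraint is obtained pointwise: setting $\bar a:=\overline{\rho^2}\in[\bar\rho^2,1]$ and $\bar e:=\overline{v_1^2}$, the pointwise Cauchy--Schwarz $|\bar m|^2=|\overline{\rho v_1}|^2\le\bar a\,\bar e$ integrates to the convex-hull inequality
\begin{equation*}
\int_\T\frac{|\bar m|^2}{\bar a}\,dx_1\le\int_\T\bar e\,dx_1\le E_1,
\end{equation*}
strictly sharper than the naive bound $\|\bar m\|_{L^2(\T)}^2\le E_1$ whenever $\bar a<1$; this is the relaxation of the nonlinear constraints $m=\rho v$, $|v|^2\le E$, $|\rho|\le 1$ to the averaged $(\bar\rho,\bar a,\bar m)$ variables. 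Combining with the one-dimensional analogue of \eqref{eq:computation_first_lower_bound},
\begin{equation*}
\frac{d}{dt}\|\bar\rho(t)\|_{H^{-1}(\T)}^2=-2\int_\T\bar\varphi'\bar m\,dx_1,\qquad\partial_{x_1}^2\bar\varphi=\bar\rho,
\end{equation*}
which is made rigorous via Lemma~\ref{lem:mixing_norm_Lipschitz}, and the weighted Cauchy--Schwarz $|\int\bar\varphi'\bar m|^2\le\int\bar a(\bar\varphi')^2\cdot\int|\bar m|^2/\bar a$, yields
\begin{equation*}
-\frac{d}{dt}\|\bar\rho(t)\|_{H^{-1}(\T)}^2\le 2\sqrt{E_1}\,\Bigl(\int_\T\bar a(\bar\varphi')^2\,dx_1\Bigr)^{1/2}.
\end{equation*}
The central technical step is to bound the weighted expression $\int_\T\bar a(\bar\varphi')^2\,dx_1$ above by $\Phi(\|\bar\rho\|_{H^{-1}(\T)}^2)$ for an explicit $\Phi$: with $X=\|\bar\rho\|_{H^{-1}(\T)}^2$ parametrised as $X=H_{max,1}^2\alpha^2(2-\alpha^2)$, the target is $\Phi(X)=2H_{max,1}^2\alpha^2(1-\alpha^2)$, chosen precisely so that the separated ODE $-dX/\sqrt{\Phi(X)}\le 2\sqrt{E_1}\,dt$ integrates, via $dS/d\alpha=\sqrt{1-\alpha^2}$, to $T\ge\sqrt{2}\,H_{max,1}S(\alpha(r_0))/\sqrt{E_1}$, which after undoing the one-dimensional normalisation is exactly \eqref{eq:new_lower_bound_1D}.

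The main obstacle is the proof of the weighted inequality $\int_\T\bar a(\bar\varphi')^2\,dx_1\le\Phi(X)$. The pointwise bounds $\bar a\le 1$, $\bar a\ge(\bar\varphi'')^2$, and $|\bar\varphi''|\le 1$ alone give only the trivial $\Phi(X)=X$, which merely reproduces the Lin--Thiffeault--Doering bound; in particular at the sawtooth extremum ($\alpha=1$) the pointwise analysis gives $\int\bar a(\bar\varphi')^2=H_{max,1}^2$ whereas the target $\Phi$ vanishes. The required refinement must therefore exploit the full convex-hull description of admissible triples $(\bar\rho,\bar a,\bar m)$ globally, likely by tracking simultaneously the evolution of the averaged second moment $\int_\T\bar a\,dx_1$ and coupling it to the $H^{-1}$-decay through an interpolation-type inequality on the convex hull of $\{(\rho,\rho^2,\rho v):|\rho|\le 1\}$. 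This coupling, absent from the standard Lin--Thiffeault--Doering computation, is the genuinely new ingredient from the differential-inclusion viewpoint, and once it produces the required $\Phi$ the integration step is purely a quadrature.
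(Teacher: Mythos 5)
Your skeleton is right where it overlaps the paper — transverse averaging to a 1D continuity equation, the comparison $\norm{\bar{\rho}(t,\cdot)}_{H^{-1}(\T)}\lesssim\norm{\rho(t,\cdot)}_{H^{-1}(\T^d)}$, the Lipschitz-in-time argument for $\frac{d}{dt}\norm{\bar\rho}_{H^{-1}}^2$, and the final quadrature with $S'(\alpha)=\sqrt{1-\alpha^2}$, including the correct target $\Phi(X)=2H_{max,1}^2\alpha^2(1-\alpha^2)$. But the two steps that actually produce the improvement are missing, and the framework you chose cannot supply the first of them. Your pointwise inequality $|\bar m|^2\le \bar a\,\bar e$ with $\bar a=\overline{\rho^2}$ is strictly too weak: for solutions with $|\rho|=1$ almost everywhere (exactly the kind produced in the weak setting, cf.\ Theorem \ref{thm:sharpness_weak_setting}) one has $\bar a\equiv 1$, so your weighted bound collapses to $\int_\T(\bar\varphi')^2=\norm{\bar\rho}_{H^{-1}(\T)}^2$ and reproduces only the Lin--Thiffeault--Doering bound, never the claimed one. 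The inequality the argument needs is $\bar m_1^2\le \bar e\,(1-\bar\rho^2)$, with the weight given by the \emph{averaged density itself}, not by the average of $\rho^2$. This is obtained in the paper by observing that the pointwise values $(\rho,v,\rho v,|v|^2)$ lie in $K$ of \eqref{eq:definition_of_K}, so the transverse averages lie in $K^{co}$, computing $K^{co}$ explicitly (Lemma \ref{lem:convex_hull_appendix}), and first removing the spatially constant bulk velocity $\bar v_1(t)$ by a Galilean-type shift that preserves the $H^{-1}$-norm and the energy budget (Lemma \ref{lem:averaged_zero_velocity}); then Lemma \ref{lem:convex_hull_with_zero_velocity} gives the stated inequality. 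Your relaxation in the variables $(\bar\rho,\bar a,\bar m)$ loses precisely this information.

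The second gap is the "central technical step" you leave open: bounding $\int_\T(1-\bar\rho^2)(\bar\varphi')^2\,dx_1$ by $\Phi\bigl(\norm{\bar\rho}_{H^{-1}(\T)}^2\bigr)$. Your suggestion — to track the time evolution of $\int_\T\bar a\,dx_1$ and couple it to the $H^{-1}$-decay — is not what is needed and, as noted above, any bound formulated through $\bar a$ is doomed by the $|\rho|=1$ examples. The paper instead solves a purely static constrained variational problem: maximize $F(w)=\int_\T(1-w_x^2)w^2\,dx$ over $1$-Lipschitz, zero-mean $w$ with $\int_\T w^2=h^2$ (here $w=\bar\varphi'$, so $w_x=\bar\rho$). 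This is done by a symmetric decreasing rearrangement (Lemma \ref{lem:existence_of_symmetric_decreasing_competitor}), an odd rearrangement in the sense of Cabr\'e et al.\ (Lemmas \ref{lem:odd_rearr}, \ref{lem:application_of_odd_rearrangement}), and a convexity argument identifying the explicit maximizer $w^{\alpha(h)}$ (Lemmas \ref{lem:reduced_var_problem}, \ref{lem:reduction_of_actual_variational_problem}), yielding exactly $\sup F=h^2-h_{max}^2\bigl(1-\sqrt{1-h^2/h_{max}^2}\bigr)^2=2h_{max}^2\alpha^2(1-\alpha^2)$, i.e.\ your postulated $\Phi$. Without the hull inequality in the form $\bar m_1^2\le\bar e(1-\bar\rho^2)$ and without a proof of this variational bound, the proposal establishes nothing beyond \eqref{eq:first_lower_bound}.
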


\begin{remark}
a) By distributional solution we mean that in addition to \eqref{eq:distr_solution} there holds 
\[
\int_0^T\int_{\T^d}v\cdot\nabla\psi\:dx\:dt=0
\] for all $\psi\in\cC^{\infty}([0,T]\times\T^d)$.

b) We also here point out that $t\mapsto \norm{\rho(t,\cdot)}_{H^{-1}(\T^d)}$ can assumed to be continuous in view of Appendix \ref{sec:appendix_lipschitz}.

c) Except for the statement that $\norm{\rho_0}_{H^{-1}(\T^d)}\leq H_{max}$, Theorem \ref{thm:lower_bound_1D} is a consequence of Theorem \ref{thm:weak_setting}.

d) We remark that Theorem \ref{thm:weak_setting} is false if condition \eqref{eq:condition_rho_weak_setting} is not imposed. This follows from the non-uniqueness results for the transport equation of Modena, Sz\'ekelyhidi \cite{Modena_Sz_AnnPDE_2018,Modena_Sz_CalcVar_2019} and Modena, Sattig \cite{Modena_Sattig_2020}. Indeed, applying for example \cite[Theorem 1.2]{Modena_Sattig_2020} to $p=2$, $\bar{u}\equiv 0$ and $\bar{\rho}(t,x)=\chi(t)\rho_0(x_1)$ with a smooth $\rho_0$ satisfying \eqref{eq:rho_0_conditions} and $\chi(t)$ smoothly transitioning from $1$ near $t=0$ to $0$ near $t=\delta$, $\delta>0$ arbitrarily small, one obtains solutions $\rho\in\cC^0([0,\delta];L^2(\T^d))$, $v\in \cC^0([0,\delta];L^2(\T^d))$ satisfying \eqref{eq:condition_energy_weak_setting} that are perfectly mixed at time $\delta$. In fact the velocity fields can have arbitrarily small energy $E$ and in addition enjoy certain Sobolev regularity.
\end{remark}

\subsection{Sharpness}
In the last part of our analysis we will look at a specific example. The computation in Remark \ref{rem:lower_bound_is_improvement} shows that the gap between the lower bound \eqref{eq:first_lower_bound} and the lower bound \eqref{eq:new_lower_bound_1D} is maximal for $r_0=\alpha_0=1$, i.e. for a ``maximally unmixed'' $\rho_0$. Lemma \ref{lem:estimate_L2_norm} states that this state is up to translation and permutation of coordinates uniquely given by
\begin{align}\label{eq:definition_hat_rho}
\hat{\rho}_0(x):=\begin{cases}
1,&x_1\in(-L/2,0),\\
-1,&x_1\in (0,L/2).
\end{cases}
\end{align}
Theorem \ref{thm:lower_bound_1D} therefore yields the estimate
\begin{equation}\label{eq:lower_bound_smooth_for_rho0_hat}
T_{mix}(\hat{\rho}_0)\geq E^{-\frac{1}{2}}H_{max}\frac{\sqrt{2}\pi}{4}=:\hat{T}_0.
\end{equation}
\begin{theorem}\label{thm:sharpness_weak_setting}
For any $\varepsilon>0$ there exist infinitely many distributional solutions $(\rho_\varepsilon,v_\varepsilon)$ of \eqref{eq:equations_distributional} on $(0,\hat{T}_0+\varepsilon)\times\T^d$ with initial data $\rho_0=\hat{\rho}_0$. The solutions satisfy \eqref{eq:condition_energy_weak_setting} with $T=\hat{T}_0+\varepsilon$ and $\abs{\rho}=1$ almost everywhere on $(0,\hat{T}_0+\varepsilon)\times\T^d$. Furthermore, $\norm{\rho_\varepsilon(t,\cdot)}_{H^{-1}(\T^d)}\rightarrow 0$ as $t\rightarrow \hat{T}_0+\varepsilon$.
\end{theorem}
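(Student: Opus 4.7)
The plan is to combine the construction of a smooth relaxed subsolution with a convex integration argument, following the differential inclusion viewpoint highlighted in the introduction. The subsolution will realize the mixing in the relaxed class $|\bar\rho| \leq 1$, while the convex integration step will produce infinitely many exact solutions with $|\rho| = 1$ almost everywhere and with the same initial data, energy bound, and perfect mixing property.

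First, I would construct a pair $(\bar\rho, \bar v) \in \cC^\infty([0, \hat{T}_0 + \varepsilon) \times \T^d; \R \times \R^d)$ satisfying the strong transport system $\partial_t \bar\rho + \bar v \cdot \nabla \bar\rho = 0$, $\Div \bar v = 0$, with initial datum $\bar\rho(0, \cdot) = \hat{\rho}_0$, a strict pointwise deficit $|\bar\rho(t, x)| \leq 1 - \eta$ on a space-time region $\Omega$ whose time-slices $\Omega_t$ fill $\T^d$ as $t \to \hat{T}_0 + \varepsilon$, a strict energy deficit $\|\bar v(t, \cdot)\|_{L^2}^2 \leq E - \eta$, and $\|\bar\rho(t, \cdot)\|_{H^{-1}} \to 0$ as $t \to \hat{T}_0 + \varepsilon$. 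A natural template comes from the ODE implicit in Theorem \ref{thm:weak_setting}: the identity $S(\alpha) = \int_0^\alpha \sqrt{1 - s^2}\, ds$ shows that the extremal $H^{-1}$-decay satisfies $\dot{\alpha} \propto \sqrt{1 - \alpha^2}$, whose solutions are sinusoidal in time. I would build $\bar\rho$ so that its $H^{-1}$-norm tracks (slightly below) this extremal profile, with $\bar v$ chosen as an essentially one-dimensional shear-like field that both transports $\bar\rho$ and can be smoothly tapered to zero near $\hat{T}_0 + \varepsilon$ using the $\varepsilon$-slack.

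With $(\bar\rho, \bar v)$ in hand, the next step is a convex integration iteration adapted to the transport equation with the $L^\infty$ constraint $|\rho| = 1$, in the spirit of Modena--Sz\'ekelyhidi \cite{Modena_Sz_AnnPDE_2018,Modena_Sz_CalcVar_2019} and Modena--Sattig \cite{Modena_Sattig_2020}, and realized via the convex hull inequalities for the transport differential inclusion developed elsewhere in the paper. Each iteration would add oscillatory perturbations $(\tilde\rho_k, \tilde v_k)$ localized in $\Omega$ and decreasing the pointwise defect $1 - (\bar\rho + \sum_k \tilde\rho_k)^2$; the perturbations vanish near $t = 0$ and are kept arbitrarily small in $H^{-1}$ near $t = \hat{T}_0 + \varepsilon$, while the $L^2$-cost of the velocity corrections is absorbed by the energy slack $\eta$. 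The resulting limit $(\rho, v)$ would solve \eqref{eq:equations_distributional}, have $|\rho| = 1$ almost everywhere, retain the initial datum $\hat{\rho}_0$, obey \eqref{eq:condition_energy_weak_setting}, and be perfectly mixed at $\hat{T}_0 + \varepsilon$. Infinitely many distinct solutions arise either from the many parameters (frequencies, phases, cutoffs) of the oscillatory building blocks, or equivalently from a Baire-category argument inside a suitable complete metric space of admissible subsolutions.

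The hard part will be the coupled balancing of three budgets inside the subsolution: the energy slack $\eta$ must absorb the $L^2$-cost of the convex integration perturbation, the pointwise slack $1 - |\bar\rho|$ must exceed the amplitude of the density corrections, and the $H^{-1}$-norm of $\bar\rho$ must still reach zero at $\hat{T}_0 + \varepsilon$. All three budgets degenerate as $\varepsilon \to 0$, and it is precisely the extra $\varepsilon$ of time horizon that leaves room to keep them simultaneously strictly positive; compatibility with the sinusoidal extremal profile inherited from $S$ is what pins the reachable time down to $\hat{T}_0 + \varepsilon$ rather than to a larger value.
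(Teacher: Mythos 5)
There is a genuine gap, and it sits in both halves of your plan. First, the ``relaxed subsolution'' you propose cannot exist as stated: you ask for a smooth pair $(\bar\rho,\bar v)$ solving the \emph{exact} transport system $\partial_t\bar\rho+\bar v\cdot\nabla\bar\rho=0$, $\divv\bar v=0$, starting from $\hat\rho_0$, yet with $\abs{\bar\rho}\leq 1-\eta$ on space--time regions filling the torus. A smooth divergence-free field transports the initial datum along its flow, so the distribution of $\bar\rho(t,\cdot)$ is preserved and $\abs{\bar\rho(t,x)}=1$ almost everywhere for all $t$; no pointwise deficit can ever open up. The relaxation has to be made at the level of the momentum, not of the transport equation: one keeps only the linear system $\partial_t\rho+\divv m=0$, $\divv v=0$ and replaces $m=\rho v$ by membership of $(\rho,v,m,e)$ in the interior of the convex hull $K^{co}$ of the constraint set \eqref{eq:definition_of_K}. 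This is exactly what the paper does: the sharp subsolution of Proposition \ref{prop:example_with_sharpness} has $v\equiv 0$ and a momentum $m_1=\lambda(t)(1-\rho^2)w$ that is \emph{not} of the form $\rho v$, built from the explicit extremal profiles $W^{\alpha(t)}$ with $\alpha(t)=S^{-1}(\pi/4-ct)$ solving the nonlocal conservation law \eqref{eq:1D_nonlocal_conservation_law}; the strictness needed for convex integration is created not by lowering the profile but by an energy offset (evolution run with $E-2\varepsilon$, energy density defined with $E-\varepsilon$), which yields $\abs{m}^2<e(1-\rho^2)$ in the mixing zone and a mixing time $T_\varepsilon=\hat T_0\sqrt{E/(E-2\varepsilon)}$.

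Second, the convex integration machinery you invoke is the wrong one for this statement. The Modena--Sz\'ekelyhidi / Modena--Sattig schemes trade the $L^\infty$ bound for integrability and do not preserve $\norm{\rho}_{L^\infty}\leq 1$, let alone produce $\abs{\rho}=1$ a.e.; the paper itself points out in Remark 1.5\,d) that those constructions are precisely the reason Theorem \ref{thm:weak_setting} fails without \eqref{eq:condition_rho_weak_setting}. What is needed is the $L^\infty$ Tartar-framework differential inclusion for the set $K$ (Appendices \ref{sec:appendix_convex_hull}, \ref{sec:convex_integration_appendix}), and here two technical points you do not address are essential: $K$ is unbounded, so one must work with the compact truncations $K_{e\leq\gamma}$ and verify via Lemma \ref{lem:convex_hull_refinement} that the subsolution lies in $\mathrm{int}(K^{co}_{e\leq\gamma})$; and the extremal subsolution's energy density blows up as $t\to 0$ (since $\alpha(0)=1$), so no single $\gamma$ works and one must convex integrate on an exhaustion $\mathscr{U}_j$ of the mixing zone approaching $t=0$, with $\gamma(t_j)\to\infty$, and pass to a limit. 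Your Baire-category remark for producing infinitely many solutions is in the right spirit, but without the correct subsolution notion, the correct ($L^\infty$-preserving) inclusion framework, and the truncation/exhaustion step, the argument as proposed does not go through.
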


Theorem \ref{thm:sharpness_weak_setting} shows that Theorem \ref{thm:weak_setting} is sharp for $\hat{\rho}_0$, meaning that the bound on the optimal mixing time for $\hat{\rho}_0$ is sharp if one only asks for the natural apriori bounds \eqref{eq:condition_energy_weak_setting}, \eqref{eq:condition_rho_weak_setting} and nothing else. We emphasize that this of course does not answer the question if there holds equality in \eqref{eq:lower_bound_smooth_for_rho0_hat} due to the lack of smoothness of $v$. On a related note we also like to mention that there is no Lagrangian picture for the mixing solutions of Theorem \ref{thm:sharpness_weak_setting}, meaning that the mixing does not occur through classical transport under a volume preserving flow, simply because the velocity fields do not induce well-defined (regular) Lagrangian flows. 

However, with respect to extending Theorem \ref{thm:sharpness_weak_setting} to a smooth, or at least somewhat smoother, setting we point out that our analysis in Section \ref{sec:sharpness_on_averaged_level} gives a precise description of the vertical averages of any solution, smooth or non-smooth, which would realize the mixing time $\hat{T}_0$ for $\hat{\rho}_0$, see Section \ref{sec:sharpness} for further discussion.

\subsection{Outline of the strategy}
One of the key aspects in our analysis is to shift the focus from the actual solutions $(\rho,v)$ to averaged solutions $(\bar{\rho},\bar{v})$. In the context of fluid dynamics these are typically called subsolutions and they are utilized to construct turbulently mixing solutions emanating from classical hydrodynamic instabilities like Kelvin-Helmholtz \cite{GK_EE,Mengual_Sz_sheets,Sz_KH}, Rayleigh-Taylor \cite{GHK_LAP,GK_Boussinesq,GKSz_RT}, Saffman-Taylor \cite{Arnaiz_Castro_Faraco,Castro_Cordoba_Faraco_invent,Castro_Faraco_G,Castro_Faraco_Mengual_turned,Foerster_Sz,Mengual_different_mobilities,Noisette_Sz,Sz_ipm} and recently also for circular vortex filaments \cite{Gancedo_HT_Mengual_circular_filament}.

However, in our case we use vertical averaging (or the corresponding direction dictated by $\rho_0$) leaving the initial datum invariant and creating an artificial mixing at positive times. This way any lower bound on the $H^{-1}$-norm of $\bar{\rho}(t,\cdot)$ serves as a lower bound for $\norm{\rho(t,\cdot)}_{H^{-1}}$. This step and further properties of the averaged solutions are presented in Sections \ref{sec:vertical_averages}, \ref{sec:general_1D_subsols}.

Thereafter we investigate the decay rate of all possible vertically averaged solutions. This is done in two steps. First, in Section \ref{sec:stepest_descent} we translate computation \eqref{eq:computation_first_lower_bound}, which lead to the lower bound of \cite{Lin_Thiffeault_Doering_2011,Lunasin_etal_2012}, to the level of averaged solutions. This gives us an inequality of the form
\begin{equation}\label{eq:formal_decay_rate_inequality_1}
\frac{d}{dt}\norm{\bar{\rho}(t,\cdot)}^2_{H^{-1}}\geq -C F[\bar{\rho},\bar{\varphi}],
\end{equation}
where $C>0$ is an (explicit) constant depending on $L,E,d$ and $F[\bar{\rho},\bar{\varphi}]$ stands symbolically for an integral involving $\bar{\rho}$ and the potential $\bar{\varphi}$ defined through $\Delta\bar{\varphi}=\bar{\rho}$.

In order to get a closed inequality only involving $\norm{\bar{\rho}(t,\cdot)}_{H^{-1}}$ we investigate the variational problem to maximize $F[\bar{\rho},\bar{\varphi}]$ under a constraint prescribing the $H^{-1}$-norm of $\bar{\rho}$. The problem can be solved explicitly relying on two symmetrizations and convexity, see Section \ref{sec:variational_problem}. 

This way we have turned \eqref{eq:formal_decay_rate_inequality_1} into an ordinary differential inequality of the type
\[
\frac{d}{dt}\norm{\bar{\rho}(t,\cdot)}^2_{H^{-1}}\geq -Cf\big(\norm{\bar{\rho}(t,\cdot)}_{H^{-1}}\big)
\]
which is finally leading to \eqref{eq:new_lower_bound_1D}, see Section \ref{sec:analysis_of_ode}.

Section \ref{sec:sharpness} addresses the sharpness of the derived lower bound for the specific initial data \eqref{eq:definition_hat_rho}. The first part concerns sharpness on the level of averaged solutions. Remarkably it turns out that there is an explicit $\bar{\rho}(t,x)$ that satisfies \eqref{eq:formal_decay_rate_inequality_1} with equality for all $t$, while at each time it is also a maximizer for the corresponding constrained variational problem studied in Section \ref{sec:variational_problem}. Thus $\bar{\rho}$ realizes $T_{mix}(\hat{\rho}_0)$ on averaged level. This is done in Section \ref{sec:sharpness_on_averaged_level}. 

Finally, in order to get actual solutions at least in the weak setting having the same mixing time as $\bar{\rho}$ up to an arbitrary small error, i.e. in order to prove Theorem \ref{thm:sharpness_weak_setting}, we utilize convex integration in a typical Baire-category manner, see Section \ref{sec:convex_integration}.

\subsection{Background and related results}
The question of mixing (or lack thereof) in fluids arises in many fields of science and day-to-day life, we refer to the examples given in \cite{Alberti_Crippa_Mazzucato,CZ_Crippa_Iyer_Mazzucato_survey,Thiffeault_survey_mixing_scales}.

The here considered problem is part of a whole family of optimal mixing problems which ask to design incompressible velocity fields $v(t,x)$ subject to certain constraints such that a given initial configuration $\rho_0(x)$ gets mixed via \eqref{eq:transport} in the best possible way. Frequent constraints are a uniform in time bound on a (homogeneous) $W^{s,p}$-norm, where the energy-constrained ($s=0,p=2$), enstrophy-constrained ($s=1,p=2$) and palenstrophy-constrained ($s=2,p=2$) optimal mixing problems form the physically most relevant cases.

In the case $s<1$, $p\in[1,\infty]$, which includes the energy-constrained setting, uniqueness for solutions of $\eqref{eq:transport}$ is lost and thus perfect mixing in finite time is possible. As explained in the previous sections a first lower bound on the mixing time has been given in \cite{Lin_Thiffeault_Doering_2011} and examples with a finite mixing time can be found in \cite{Depauw,Lunasin_etal_2012}.

In contrast for $s=1$, $p\in[1,\infty]$ perfect mixing in finite time is excluded due to the DiPerna-Lions uniqueness result \cite{DiPernaLions}. In the case $p>1$ lower exponential bounds for different mixing scales have first been given in \cite{Crippa_DeLellis_lower_bound,Iyer_Kiselev_Xu_lower_bound,Seis_lower_bound} and then also in \cite{Brue_Nguyen_lower_bound,HSSS_lower_bound,Leger_lower_bound,Meyer_Seis_lower_bound}. First examples that realize an exponential decay of the mixing norm have been given in \cite{Alberti_Crippa_Mazzucato} for $p\leq\infty$ and \cite{Yao_Zlatos} for exponents $p$ below a certain value (the enstrophy constrained case $p=2$ is covered) but for any initial configuration. Further constructions emphasizing for instance universality (or lack thereof) of the rate with respect to the initial data or time periodicity of the used flow can be found in \cite{Bedrossian_Blumenthal_PS,Blumenthal_CotiZelati_Gvalani,Cooperman,Elgindi_Liss_Mattingly,Elgindi_Zlatos,MyersHill_Sturman_Wilson}. An exponential bound from below for the case $s=1$ and $p=1$, which is related to Bressan's conjecture \cite{Bressan}, is open. 
The same is true for divergence free vectorfields uniformly bounded in $BV$ where finite time mixing is exclude as well \cite{Ambrosio}. However, in \cite{Bianchini_Zizza} it has been shown that exponentially fast mixing $L^\infty_tBV_x$ vector fields are in fact dense with respect to $\norm{\cdot}_{L^1_tL^1_x}$. 

The case $s>1$ inherits the exponential lower bound from $s=1$. For the special class of cellular vector fields the bound could be improved to a polynomial rate \cite{Crippa_Schulze_2017,Crippa_Schulze_2023}. Exponentially mixing (and thus not cellular) examples for a certain range of $(s,p)$ have been constructed in \cite{Elgindi_Zlatos}. For general exponents $s>1$, $p\in[1,\infty]$ cellular examples with polynomial decay have been given in \cite{Alberti_Crippa_Mazzucato}.      

In other types of optimal mixing constraints the velocity field is required to satisfy an additional fluid pde subject to control terms through which the fluid can be manipulated, see for instance \cite{Mathew_etal_Stokes} for Stokes flows controlled by finite-dimensional forces and \cite{Hu_Wu_Navier_Stokes} for Navier-Stokes under the influence of a boundary control.

For further background and results about mixing and also its link to related topics like enhanced or anomalous dissipation we refer to the recent survey \cite{CZ_Crippa_Iyer_Mazzucato_survey} as a great starting point for reading, as well as the aforementioned articles \cite{Alberti_Crippa_Mazzucato,Thiffeault_survey_mixing_scales}.

\subsection{Further questions}
As mentioned earlier the question if the optimal mixing time for $\hat{\rho}_0$ can also be approached under the influence of smooth velocity fields, i.e. if there holds equality in \eqref{eq:lower_bound_smooth_for_rho0_hat}, remains open. It would also be interesting to see if the here followed approach through the analysis of an averaged picture can be extended to non-flat initial data and to other disciplines of optimal mixing described in the previous subsection. For instance to prescribe $\norm{v}_{L^\infty_tW^{s,p}_x}$ instead of $\norm{v}_{L^\infty_tL^2_x}$ or to ask $v$ to satisfy in addition a fluid equation with an external force through which the possibilities of stirring are restricted.

\section{Improved lower bound for one-dimensional initial data}\label{sec:improved_bound_1D_data}

In this section we will derive \eqref{eq:new_lower_bound_1D} for one-dimensional initial data relying on an investigation of the associated averaged evolution.
\begin{definition}\label{def:one_dimensional_initial_data}
We say that $\rho_0$ as in \eqref{eq:rho_0_conditions} is one-dimensional provided $\rho_0$ depends only on $x_1$, i.e. $\rho_0(x)=\rho_0(x_1)$.
\end{definition}

\subsection{Vertical averages}\label{sec:vertical_averages}

Let $\rho_0$ be one-dimensional and $\rho \in L^\infty((0,T)\times\T^d)$, $v\in L^{\infty}(0,T;L^2(\T^d;\R^d))$ be a distributional solution of \eqref{eq:equations_distributional} satisfying \eqref{eq:condition_energy_weak_setting}, \eqref{eq:condition_rho_weak_setting}. Of course the influence of $v$ will break the initially present symmetry of the density. We will evade this loss of symmetry by passing to corresponding space averages. 

For that we recast the transport equation together with the imposed constraints as a differential inclusion. We begin by introducing the following additional quantities
\begin{align*}
m:=\rho v\in L^\infty(0,T;L^2(\T^d;\R^d)),\quad e:=\abs{v}^2\in L^\infty(0,T;L^1(\T^d)).
\end{align*}
Then the tuple $z:=(\rho,v,m,e)\in L_{t,x}^\infty\times L^\infty_tL^2_x\times L^\infty_tL^2_x\times L^\infty_tL^1_x$ satisfies the linear equations
\begin{align}\label{eq:linear_equation}
\partial_t\rho+\divv m=0,\quad \divv v=0
\end{align}
with initial condition $\rho_0$ (the notion of solution is understood in analogy to \eqref{eq:distr_solution}), as well as the pointwise constraint
\begin{equation}\label{eq:nonlinear_constraint}
z(t,x)\in K\quad \text{for almost every }(t,x)\in(0,T)\times\T^d,
\end{equation}
where the set $K\subset\R\times\R^d\times\R^d\times\R$ is given by
\begin{equation}\label{eq:definition_of_K}
K:=\set{(\rho,v,m,e):\abs{\rho}\leq 1,~m=\rho v,~\abs{v}^2=e}.
\end{equation}
Here we have used \eqref{eq:condition_rho_weak_setting}. Furthermore, the energy constraint \eqref{eq:energy_constraint}, or \eqref{eq:condition_energy_weak_setting}, now reads 
\begin{equation}\label{eq:integral_e}
\int_{\T^d}e(t,x)\:dx\leq E
\end{equation}
for almost every $t\in(0,T)$.

For $x\in\T^d$ we write $(x_1,x')\in\T\times\T^{d-1}$ and define the vertically averaged tuple $\bar{z}=(\bar{\rho},\bar{v},\bar{m},\bar{e})$ by
\begin{equation}\label{eq:definition_average_of_z}
\bar{z}(t,x_1):=L^{-(d-1)}\int_{\T^{d-1}}z(t,x_1,x')\:dx'.
\end{equation}
Moreover, $K^{co}$ denotes the closed convex hull of $K$.
\begin{lemma}\label{lem:properties_of_averaged_solutions}
For any one-dimensional $\rho_0$ and any perfectly mixing $v$ the tuple $\bar z=(\bar{\rho},\bar v,\bar m,\bar e)\in L_{t,x}^\infty\times L^\infty_tL^2_x\times L^\infty_tL^2_x\times L^\infty_tL^1_x$ has the following properties:
\begin{enumerate}[(i)]
\item\label{itm:averaged_i} $(\bar{\rho},\bar{m}_1)$ satisfies $\partial_t\bar{\rho}+\partial_{x_1}\bar{m}_1=0$ on $(0,T)\times\T$, $\bar{\rho}(0,\cdot)=\rho_0$  in the sense of distributions,
\item\label{itm:averaged_ii} $\partial_{x_1}\bar{v}_1=0$ on $(0,T)\times\T^d$ in the sense of distributions,
\item\label{itm:averaged_iii} $L^{d-1}\int_{\T}\bar{e}(t,x_1)\:dx_1\leq E$ for almost every $t\in(0,T)$,
\item\label{itm:averaged_iv} for almost every $(t,x_1)\in(0,T)\times \T$ there holds $\bar{z}(t,x_1)\in K^{co}$,
\item\label{itm:averaged_v} there holds $\norm{\rho(t,\cdot)}_{H^{-1}(\T^d)}\geq \norm{\bar{\rho}(t,\cdot)}_{H^{-1}(\T^d)}=L^{\frac{d-1}{2}}\norm{\bar{\rho}(t,\cdot)}_{H^{-1}(\T)}$ for any $t\in[0,T)$.
\end{enumerate}
\end{lemma}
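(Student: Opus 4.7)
The key observation is that items \eqref{itm:averaged_i}--\eqref{itm:averaged_iii} and \eqref{itm:averaged_v} reduce to Fubini after testing with functions of the form $\tilde\psi(t,x)=\psi(t,x_1)$, whereas item \eqref{itm:averaged_iv} is the structural heart and will be obtained from the Hahn--Banach characterization of $K^{co}$ as an intersection of closed affine half-spaces.

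For \eqref{itm:averaged_i} I would plug $\tilde\psi$ with $\psi\in\cC^\infty_c([0,T)\times\T)$ into the distributional formulation of $\partial_t\rho+\divv m=0$; since $\nabla\tilde\psi=(\partial_{x_1}\psi,0,\ldots,0)$ only the component $m_1$ contributes, and carrying out the $x'$-integration via Fubini---together with the observation that $\rho_0$ is left unchanged by averaging because it depends only on $x_1$---yields the claimed identity on $\T$ up to the common factor $L^{d-1}$. Item \eqref{itm:averaged_ii} follows from the analogous test of $\divv v=0$, while item \eqref{itm:averaged_iii} is immediate from Fubini applied to \eqref{eq:condition_energy_weak_setting}.

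The main step is \eqref{itm:averaged_iv}. Since $K^{co}$ is a closed convex subset of the finite-dimensional space $\R\times\R^d\times\R^d\times\R$, it equals the intersection of countably many closed affine half-spaces $H_{\xi,c}=\{z:\langle\xi,z\rangle\leq c\}$ containing $K$. For any such half-space, the pointwise constraint \eqref{eq:nonlinear_constraint} gives $\langle\xi,z(t,x_1,x')\rangle\leq c$ almost everywhere; integrating in $x'$ over $\T^{d-1}$ and dividing by $L^{d-1}$ yields $\langle\xi,\bar z(t,x_1)\rangle\leq c$ for a.e.\ $(t,x_1)$. Intersecting over the chosen countable family gives $\bar z(t,x_1)\in K^{co}$ for a.e.\ $(t,x_1)$. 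The only measure-theoretic caveat is integrability of $x'\mapsto z(t,x_1,x')$ for a.e.\ $(t,x_1)$, which is provided by Fubini applied to the ambient $L^\infty_{t,x}\times L^\infty_tL^2_x\times L^\infty_tL^2_x\times L^\infty_tL^1_x$ bounds.

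For \eqref{itm:averaged_v} I would argue by duality. Given $\psi\in H^1(\T^d)$, set $\overline\psi(x_1):=L^{-(d-1)}\int_{\T^{d-1}}\psi(x_1,x')\,dx'$ and view it as a function on $\T^d$ independent of $x'$. Fubini gives $\int_{\T^d}\bar\rho\,\psi\,dx=\int_{\T^d}\bar\rho\,\overline\psi\,dx=\int_{\T^d}\rho\,\overline\psi\,dx$, while Jensen's inequality applied both to $\overline\psi$ and to $\partial_{x_1}\overline\psi=\overline{\partial_{x_1}\psi}$ yields $\|\overline\psi\|_{H^1(\T^d)}\leq\|\psi\|_{H^1(\T^d)}$. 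Taking the supremum over $\|\psi\|_{H^1(\T^d)}\leq 1$ then produces $\|\rho(t,\cdot)\|_{H^{-1}(\T^d)}\geq\|\bar\rho(t,\cdot)\|_{H^{-1}(\T^d)}$; restricting the supremum defining $\|\bar\rho\|_{H^{-1}(\T^d)}$ to $x'$-independent test functions (which is lossless since $\psi\mapsto\overline\psi$ is an $H^1$-contraction preserving the pairing with $\bar\rho$) and using the scaling $\|g\|_{H^1(\T^d)}=L^{(d-1)/2}\|g\|_{H^1(\T)}$ for $g=g(x_1)$ produces the remaining factor $L^{(d-1)/2}$. The main obstacle I anticipate is the clean packaging of \eqref{itm:averaged_iv} through the half-space description of $K^{co}$; once set up, the remaining items reduce to routine test-function and averaging calculations.
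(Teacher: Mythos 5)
Your proposal is correct and follows essentially the same route as the paper: items \eqref{itm:averaged_i}--\eqref{itm:averaged_iii} by testing with $x'$-independent functions and Fubini, item \eqref{itm:averaged_iv} by the standard fact that averaging against a probability measure preserves membership in the closed convex hull (your half-space formulation is just the usual way to make the paper's ``any sort of averaging'' precise), and item \eqref{itm:averaged_v} by an $H^{1}$/$H^{-1}$ duality argument. The only cosmetic difference is that in \eqref{itm:averaged_v} the paper pairs $\rho$ directly with the potential $\varphi$ of $\bar\rho$ (which depends only on $x_1$), i.e.\ your duality argument specialized to the optimal test function; for consistency with the paper's homogeneous definition of $\norm{\cdot}_{H^{-1}}$ you should phrase the contraction in terms of $\norm{\nabla\overline\psi}_{L^2}\leq\norm{\nabla\psi}_{L^2}$ rather than the full $H^1$ norm, which your Jensen step already gives.
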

\begin{proof} The first two properties are direct consequences of the linearity of the equations \eqref{eq:linear_equation} and the symmetry of the initial data $\rho_0$. Property \ref{itm:averaged_iii} is immediate. Property \ref{itm:averaged_iv} is a consequence of \eqref{eq:nonlinear_constraint} in combination with any sort of averaging. Regarding \ref{itm:averaged_v} one first of all sees that if $\Delta \varphi(t,\cdot)=\bar{\rho}(t,\cdot)$ on $\T^d$, then $\varphi(t,x)=\varphi(t,x_1)$. Thus $\norm{\bar\rho(t,\cdot)}_{H^{-1}(\T^d)}=L^{\frac{d-1}{2}}\norm{\bar{\rho}(t,\cdot)}_{H^{-1}(\T)}$. Moreover, 
\begin{align*}
&\norm{\bar{\rho}(t,\cdot)}_{H^{-1}(\T^d)}^2=\int_{\T^d}(\partial_{x_1}\varphi(t,x_1))^2\:dx=-L^{d-1}\int_{\T}\bar{\rho}(t,x_1)\varphi(t,x_1)\:dx_1\\
&\hspace{22pt}=-\int_{\T^d}\rho(t,x)\varphi(t,x_1)\:dx\leq \sup_{\norm{\nabla \phi}_{L^2(\T^d)}=1}\int_{\T^d}\rho(t,x)\phi(x)\:dx \norm{\partial_{x_1}\varphi(t,\cdot)}_{L^2(\T^d)}\\
&\hspace{22pt}=\norm{\rho(t,\cdot)}_{H^{-1}(\T^d)}\norm{\bar{\rho}(t,\cdot)}_{H^{-1}(\T^d)},
\end{align*}
which finishes the proof of the lemma.
\end{proof}
Clearly \ref{itm:averaged_v} is important for us, as it allows to translate any lower bound on $\norm{\bar{\rho}(t,\cdot)}_{H^{-1}(\T)}$ to a lower bound on the mixing norm $\norm{\rho(t,\cdot)}_{H^{-1}(\T^d)}$. Note also that in \ref{itm:averaged_iv} the original set $K$ appearing in the pointwise constraint \eqref{eq:nonlinear_constraint} simply has been replaced by its abstractly defined closed convex hull $K^{co}$. However, in order to give a meaningful lower bound on $\norm{\bar{\rho}(t,\cdot)}_{H^{-1}(\T)}$ a precise description of this convex hull will be crucial. 

Before turning to the hull though, we like to state a further simplification saying that it is enough to consider velocity fields $v$ for which the first component of the vertical average is vanishing. In other words, for an efficient mixing in the one-dimensional case there should be no nontrivial background velocity that moves the fluid in $x_1$-direction as a bulk without mixing it.

\begin{lemma}\label{lem:averaged_zero_velocity} Let $\rho_0$ be one-dimensional, $T>0$ and $(\rho,v)$ be a distributional solution of \eqref{eq:equations_distributional} satisfying \eqref{eq:condition_energy_weak_setting}, \eqref{eq:condition_rho_weak_setting}. Then there exists another distributional solution $(\eta,w)$ of \eqref{eq:equations_distributional} with $w$ satisfying \eqref{eq:condition_energy_weak_setting}, $\eta$ satisfying \eqref{eq:condition_rho_weak_setting} and such that $\norm{\rho(t,\cdot)}_{H^{-1}(\T^d)}=\norm{\eta(t,\cdot)}_{H^{-1}(\T^d)}$ for all $t\in[0,T)$. Moreover, $\bar{w}_1(t,x_1)=0$ for almost every $(t,x_1)\in(0,T)\times\T$.
\end{lemma}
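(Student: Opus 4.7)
The strategy is to pass to a moving reference frame in the $x_1$-direction which cancels the non-mixing bulk drift. As in the proof of Lemma~\ref{lem:properties_of_averaged_solutions}\ref{itm:averaged_ii}, the divergence-free condition on $v$ combined with $\rho_0$ being one-dimensional yields $\partial_{x_1}\bar v_1=0$ in the sense of distributions; since $\bar v_1$ is already $x'$-independent by construction, there exists $c\in L^\infty(0,T)$ with $\bar v_1(t,x_1)=c(t)$. I would then define the Lipschitz shift $s(t):=\int_0^t c(\tau)\,d\tau$ and set
\[
\eta(t,x):=\rho(t,x_1+s(t),x'),\qquad w(t,x):=v(t,x_1+s(t),x')-c(t)e_1.
\]

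Next I would verify that $(\eta,w)$ is a distributional solution of \eqref{eq:equations_distributional} satisfying the constraints \eqref{eq:condition_energy_weak_setting} and \eqref{eq:condition_rho_weak_setting}. The pointwise bound $|\eta|\leq 1$ and $\divv w=0$ are immediate from the corresponding properties of $\rho$ and $v$ together with translation-invariance in $x_1$. For the transport equation, given $\psi\in \cC^\infty_c([0,T)\times\T^d)$, I would introduce the auxiliary test function $\tilde\psi(t,y):=\psi(t,y_1-s(t),y')$, perform the substitution $y_1=x_1+s(t)$ in the weak formulation for $(\eta,w)$, and use the chain rule identity $\partial_t\tilde\psi(t,y)=(\partial_t\psi)(t,y_1-s(t),y')-c(t)(\partial_{x_1}\psi)(t,y_1-s(t),y')$ to reduce everything to the weak formulation for $(\rho,v)$ tested against $\tilde\psi$. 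For the energy, since $\int_{\T^d}v_1\,dx=L^d c(t)$ by the definition of $c$, expanding $|w|^2$ yields
\[
\int_{\T^d}|w(t,x)|^2\,dx=\int_{\T^d}|v(t,x)|^2\,dx-c(t)^2 L^d\leq E.
\]

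Finally, shifts in $x_1$ on $\T^d$ commute with the Laplacian and preserve $L^2$-norms, so $\norm{\eta(t,\cdot)}_{H^{-1}(\T^d)}=\norm{\rho(t,\cdot)}_{H^{-1}(\T^d)}$ for every $t\in[0,T)$; the initial condition $\eta(0,\cdot)=\rho_0$ follows from $s(0)=0$ and the one-dimensionality of $\rho_0$; and integrating $w_1$ in $x'$ gives $\bar w_1(t,x_1)=\bar v_1(t,x_1+s(t))-c(t)=c(t)-c(t)=0$, as required. The main obstacle will be making the change of variables rigorous at the level of distributions, since the natural test function $\tilde\psi$ is only Lipschitz in time rather than smooth; I expect to resolve this by a standard mollification in $t$ and pass to the limit using the $L^\infty$-bound on $\rho$ and the $L^\infty_tL^2_x$-bound on $v$.
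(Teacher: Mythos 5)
Your proposal is correct and follows essentially the same route as the paper: the paper also uses the volume-preserving shift $\Phi_t(x)=(x_1+\int_0^t\bar v_1(s)\,ds,x')$, sets $\eta=\rho\circ\Phi_t$, $w=v\circ\Phi_t-\bar v_1(t)e_1$, and concludes via the same energy identity $\int|w|^2=\int|v|^2-L^d\bar v_1^2$ and translation invariance of the $H^{-1}$-norm. The only difference is that you explicitly flag (and propose to handle by time-mollification) the fact that the shifted test function is merely Lipschitz in $t$, a point the paper leaves as ``easy to check.''
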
 
\begin{proof}
By Lemma \ref{lem:properties_of_averaged_solutions} \ref{itm:averaged_ii} the first component of the averaged velocity $\bar{v}$ is spatially constant almost everywhere for almost every $t$. Define $\Phi_t:\T^d\rightarrow\T^d$ by 
\[
\Phi_{t,1}(x):=x_1+\int_0^t\bar{v}_1(s)\:ds,\quad \Phi_{t,j}(x)=x_j,~j=2,\ldots,d,
\]
such that $\Phi_t$ is in fact the Lagrangian flow associated to the vectorfield $(t,x)\mapsto(\bar{v}_1(t),0,\ldots,0)^T=\bar{v}_1(t)e_1\in\R^d$. Note that $\Phi_t$ is a volume preserving diffeomorphism.

Furthermore, we define 
\begin{align*}
\eta(t,x):=\rho(t,\Phi_t(x)),\quad w(t,x):=v(t,\Phi_t(x))-\bar{v}_1(t)e_1,
\end{align*}
where $\rho$ is the solution associated with $v$. Clearly the field $w$ is also incompressible. It is also easy to check that $\eta\in L^\infty((0,T)\times\T^d)$ satisfies $\partial_t\eta+\divv(\eta w)=0$, $\eta(0,\cdot)=\rho_0$ in the sense of distributions.

Moreover, if $\Delta \varphi(t,\cdot)=\rho(t,\cdot)$, then $\Delta (\varphi(t,\cdot)\circ \Phi_t)=\eta(t,\cdot)$ and $\nabla(\varphi(t,\cdot)\circ\Phi_t)=\nabla\varphi(t,\cdot)\circ \Phi_t$. Thus the volume preserving nature of $\Phi_t$ implies $\norm{\eta(t,\cdot)}_{H^{-1}(\T^d)}=\norm{\rho(t,\cdot)}_{H^{-1}(\T^d)}$, $t\in[0,T)$. 

Finally, we compute
\begin{align*}
\bar{w}_1(t,x_1)=L^{-(d-1)}\int_{\T^{d-1}}v_1(t,\Phi_t(x))\:dx-\bar{v}_1(t)=\bar{v}_1(t)-\bar{v}_1(t)=0,
\end{align*}
and
\begin{align*}
\int_{\T^d}\abs{w(t,x)}^2\:dx&=\int_{\T^d}\abs{v(t,\Phi_t(x))-\bar{v}_1(t)e_1}^2\:dx\\
&=\int_{\T^d}\abs{v(t,x)}^2\:dx-2\bar{v}_1(t)\int_{\T^d}v_1(t,x)\:dx +L^d\bar{v}_1(t)^2\\
&=\int_{\T^d}\abs{v(t,x)}^2\:dx-L^d\bar{v}_1(t)^2\leq E.
\end{align*}
Thus $w$ satisfies all the requested properties.
\end{proof}

Next we state a consequence of being in the closed convex hull $K^{co}$, $K$ was defined in \eqref{eq:definition_of_K}, and having a vanishing first velocity component. 
\begin{lemma}\label{lem:convex_hull_with_zero_velocity}
Let $z=(\rho,v,m,e)$ be a point in $K^{co}$ with $v_1=0$. Then there holds
\begin{align}\label{eq:special_hull_inequality}
m_1^2\leq e(1-\rho^2).
\end{align}
\end{lemma}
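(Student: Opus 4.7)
The plan is to first establish the inequality on the ordinary convex hull $\conv(K)$ by a direct algebraic computation, and then use that the inequality $m_1^2 \leq e(1-\rho^2)$ defines a closed subset of $\R \times \R^d \times \R^d \times \R$ to extend it to the closed convex hull $K^{co}$. A small subtlety to keep in mind is that the slice $K \cap \{v_1 = 0\}$ is not itself convex, and the hypothesis $v_1 = 0$ for the averaged point $z$ does not mean the constituents of an approximating convex combination have vanishing first velocity component. The key is therefore to derive an auxiliary inequality valid on all of $\conv(K)$ and only insert $v_1 = 0$ at the very end.

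Concretely, for a finite convex combination $z = \sum_i \lambda_i z_i$ of elements $z_i = (\rho_i, v_i, \rho_i v_i, |v_i|^2) \in K$ with $|\rho_i|\leq 1$, I would compute
\begin{equation*}
m_1 - \rho v_1 \;=\; \sum_i \lambda_i \rho_i v_{i,1} - \rho\sum_i \lambda_i v_{i,1} \;=\; \sum_i \lambda_i (\rho_i - \rho)\, v_{i,1}.
\end{equation*}
The weighted Cauchy--Schwarz inequality then yields
\begin{equation*}
(m_1 - \rho v_1)^2 \;\leq\; \Bigl(\sum_i \lambda_i (\rho_i - \rho)^2\Bigr)\Bigl(\sum_i \lambda_i v_{i,1}^2\Bigr).
\end{equation*}
The first factor equals $\sum_i \lambda_i \rho_i^2 - \rho^2$ and is bounded by $1 - \rho^2$ thanks to $|\rho_i|\leq 1$, while the second factor is at most $\sum_i \lambda_i |v_i|^2 = e$. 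Combining these estimates yields the stronger inequality $(m_1 - \rho v_1)^2 \leq (1-\rho^2)\,e$ valid on all of $\conv(K)$.

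To conclude for an arbitrary $z \in K^{co}$ with $v_1 = 0$, I would approximate $z$ by a sequence $z^{(n)} \in \conv(K)$ in the Euclidean topology of $\R \times \R^d \times \R^d \times \R$. Each $z^{(n)}$ satisfies $(m_1^{(n)} - \rho^{(n)} v_1^{(n)})^2 \leq (1-(\rho^{(n)})^2)\,e^{(n)}$ by the previous paragraph, and passing to the limit using $v_1^{(n)} \to v_1 = 0$ gives precisely $m_1^2 \leq (1-\rho^2)\,e$, as required. The only conceptual hurdle is realising that the useful identity is not $m_1 = \sum \lambda_i \rho_i v_{i,1}$ but the recentred version $m_1 - \rho v_1 = \sum \lambda_i (\rho_i - \rho) v_{i,1}$, which is what allows Cauchy--Schwarz to produce the factor $1-\rho^2$ rather than merely $1$; beyond this observation no real obstacle is expected.
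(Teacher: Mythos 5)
Your proof is correct, and it takes a genuinely different route from the paper. The paper deduces the lemma from its explicit characterization of $K^{co}$ (Lemma \ref{lem:convex_hull_appendix}): a point of the hull with $\abs{\rho}<1$ satisfies $\abs{m-\rho v}^2\leq (e-\abs{v}^2)(1-\rho^2)$, from which $m_1^2=(m_1-\rho v_1)^2\leq \abs{m-\rho v}^2+\abs{v}^2(1-\rho^2)\leq e(1-\rho^2)$ follows once $v_1=0$, with the case $\abs{\rho}=1$ handled separately via $m=\rho v$. You instead bypass the hull characterization entirely: on finite convex combinations the recentred identity $m_1-\rho v_1=\sum_i\lambda_i(\rho_i-\rho)v_{i,1}$ together with weighted Cauchy--Schwarz and $\abs{\rho_i}\leq 1$ gives $(m_1-\rho v_1)^2\leq(1-\rho^2)e$ on $\conv(K)$, and since this inequality defines a closed set and $K^{co}=\overline{\conv(K)}$ in $\R^{2d+2}$, it survives the passage to the closed convex hull, after which $v_1=0$ is inserted. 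Your recentring observation is exactly the right move, and the estimates of both factors are correct. What each approach buys: yours is shorter and self-contained for this lemma, needing no description of $K^{co}$ at all (and in fact yields the slightly stronger inequality $(m_1-\rho v_1)^2\leq e(1-\rho^2)$ on all of $K^{co}$); the paper's route carries the overhead of Lemma \ref{lem:convex_hull_appendix}, but that full characterization is needed anyway in the convex-integration part (Section \ref{sec:convex_integration} and Appendix \ref{sec:convex_integration_appendix}), so for the paper it comes for free, whereas as a standalone argument your derivation is the more economical one.
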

The proof is relying on the explicit characterization of $K^{co}$, which has nothing to do with the vertical averages considered in this section and is therefore moved to Appendix \ref{sec:appendix_convex_hull}.

\subsection{General one-dimensional subsolutions}\label{sec:general_1D_subsols}

Motivated by Lemmas \ref{lem:properties_of_averaged_solutions}, \ref{lem:averaged_zero_velocity}, \ref{lem:convex_hull_with_zero_velocity} we collect the important properties of vertical averages in the following definition of a general one-dimensional averaged solution, or one-dimensional subsolution in the convex-integration jargon.
\begin{definition}\label{def:one_dimensional_subsolution}
Let $\rho_0$ be as in Definition \ref{def:one_dimensional_initial_data} and $T>0$. The pair $(\rho,m_1)\in L^{\infty}((0,T)\times\T)\times L^\infty(0,T;L^2(\T))$ is called a one-dimensional subsolution for $\rho_0$ provided 
\begin{enumerate}[(i)]
\item\label{itm:one_dim_subsol_i} $\norm{\rho}_{L^\infty((0,T)\times\T)}\leq 1$,
\item\label{itm:one_dim_subsol_ii} there holds $\partial_t\rho +\partial_{x_1}m_1=0$ on $(0,T)\times\T$, $\rho(0,\cdot)=\rho_0$ in the sense of distributions,
\item\label{itm:one_dim_subsol_iii} there exists $e\in L^\infty(0,T; L^1(\T))$ such that for almost every $t\in(0,T)$ there holds $L^{d-1}\int_{\T}e(t,x_1)\:dx_1\leq E$, 
\item\label{itm:one_dim_subsol_iv} and such that almost everywhere on $(0,T)\times\T$ there holds
$m_1^2\leq e(1-\rho^2)$.
\end{enumerate}
If in addition $\norm{\rho(t,\cdot)}_{H^{-1}(\T)}\rightarrow 0$ as $t\rightarrow T$ we say that $(\rho,m)$ is a perfectly mixing subsolution for $\rho_0$ with mixing time $T$.
\end{definition}
Note here that in view of Lemma \ref{lem:mixing_norm_Lipschitz} we again assume that the $H^{-1}$-norm of any $\rho$ as in Definition \ref{def:one_dimensional_subsolution} is well-defined for any $t\in[0,T)$.
\begin{definition}\label{def:optimal_mixing_time_for_1D_subsols} Let $\rho_0$ be a one-dimensional initial data.
The optimal mixing time for $\rho_0$ in the class of one-dimensional subsolutions $T^{1sub}_{mix}(\rho_0)$ is defined as the infimum of all $T$ for which there exists a subsolution in the sense of Definition \ref{def:one_dimensional_subsolution} that is perfectly mixing $\rho_0$ at time $T$.  
\end{definition}
Utilizing actual vertical averaging as introduced in Section \ref{sec:vertical_averages} and Lemmas \ref{lem:properties_of_averaged_solutions}, \ref{lem:averaged_zero_velocity}, \ref{lem:convex_hull_with_zero_velocity} one deduces the following relation.
\begin{lemma}\label{lem:existence_of_general_1D_subsolution} 
For one-dimensional $\rho_0$ and any distributional solution $(\rho,v)$ as in Theorem \ref{thm:weak_setting} there exists a one-dimensional subsolution $(\tilde{\rho},m_1)$ for $\rho_0$ with 
\begin{align*}
\norm{\rho(t,\cdot)}_{H^{-1}(\T^d)}\geq L^{\frac{d-1}{2}}\norm{\tilde{\rho}(t,\cdot)}_{H^{-1}(\T)},~t\in[0,T).
\end{align*}
In particular $T_{mix}(\rho_0)\geq T_{mix}^{1sub}(\rho_0)$ for all one-dimensional initial distributions $\rho_0$.
\end{lemma}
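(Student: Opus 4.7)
The plan is to obtain $(\tilde\rho,m_1)$ by a two-step procedure: first normalize the velocity so that its vertical average has vanishing first component, then vertically average the resulting tuple. Given $(\rho,v)$ as in Theorem~\ref{thm:weak_setting}, I would first apply Lemma~\ref{lem:averaged_zero_velocity} to obtain another distributional solution $(\eta,w)$ of \eqref{eq:equations_distributional} satisfying \eqref{eq:condition_energy_weak_setting} and \eqref{eq:condition_rho_weak_setting}, with $\bar w_1\equiv 0$ a.e.\ on $(0,T)\times\T$ and $\norm{\eta(t,\cdot)}_{H^{-1}(\T^d)}=\norm{\rho(t,\cdot)}_{H^{-1}(\T^d)}$ for every $t\in[0,T)$. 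Setting $z:=(\eta,w,\eta w,\abs{w}^2)$ and writing $\bar z=(\bar\eta,\bar w,\bar m,\bar e)$ for its vertical average as in \eqref{eq:definition_average_of_z}, I would then put $\tilde\rho:=\bar\eta$, $m_1:=\bar m_1$, $e:=\bar e$.

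The verification of the four conditions of Definition~\ref{def:one_dimensional_subsolution} is then straightforward. Condition~\ref{itm:one_dim_subsol_i} follows from $\abs{\eta}\leq 1$ a.e.\ together with averaging. Condition~\ref{itm:one_dim_subsol_ii} is Lemma~\ref{lem:properties_of_averaged_solutions}\ref{itm:averaged_i}, noting $\bar\rho_0=\rho_0$ by one-dimensionality of the initial datum. Condition~\ref{itm:one_dim_subsol_iii} is Lemma~\ref{lem:properties_of_averaged_solutions}\ref{itm:averaged_iii}. The crucial condition~\ref{itm:one_dim_subsol_iv} is where the preparatory step pays off: Lemma~\ref{lem:properties_of_averaged_solutions}\ref{itm:averaged_iv} yields $\bar z(t,x_1)\in K^{co}$ a.e., and since the first component of $\bar w$ vanishes, Lemma~\ref{lem:convex_hull_with_zero_velocity} applied pointwise gives precisely $m_1^2\leq e(1-\tilde\rho^2)$ a.e.

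The $H^{-1}$-inequality then follows by chaining Lemma~\ref{lem:averaged_zero_velocity} with Lemma~\ref{lem:properties_of_averaged_solutions}\ref{itm:averaged_v}:
\[
\norm{\rho(t,\cdot)}_{H^{-1}(\T^d)}=\norm{\eta(t,\cdot)}_{H^{-1}(\T^d)}\geq \norm{\bar\eta(t,\cdot)}_{H^{-1}(\T^d)}=L^{\frac{d-1}{2}}\norm{\tilde\rho(t,\cdot)}_{H^{-1}(\T)}.
\]
If $(\rho,v)$ perfectly mixes $\rho_0$ at time $T$, then $\norm{\tilde\rho(t,\cdot)}_{H^{-1}(\T)}\to 0$ as $t\to T$, so $(\tilde\rho,m_1)$ is a perfectly mixing one-dimensional subsolution at time $T$, and taking infima yields $T_{mix}(\rho_0)\geq T_{mix}^{1sub}(\rho_0)$. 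There is no serious obstacle to this step; the lemma is essentially a packaging of the three preceding results. The only non-cosmetic subtlety is that the abstract convex-hull inclusion of Lemma~\ref{lem:properties_of_averaged_solutions}\ref{itm:averaged_iv} becomes exploitable only after eliminating the bulk $x_1$-drift via Lemma~\ref{lem:averaged_zero_velocity}, which is what lets Lemma~\ref{lem:convex_hull_with_zero_velocity} convert the convex-hull membership into the pointwise quadratic inequality required by Definition~\ref{def:one_dimensional_subsolution}\ref{itm:one_dim_subsol_iv}.
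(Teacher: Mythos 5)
Your proposal is correct and is exactly the argument the paper has in mind: the paper gives no written proof of this lemma, stating only that it follows by vertical averaging together with Lemmas \ref{lem:properties_of_averaged_solutions}, \ref{lem:averaged_zero_velocity} and \ref{lem:convex_hull_with_zero_velocity}, which is precisely your two-step procedure (first remove the bulk $x_1$-drift via Lemma \ref{lem:averaged_zero_velocity}, then average and invoke Lemma \ref{lem:convex_hull_with_zero_velocity} to get the pointwise inequality of Definition \ref{def:one_dimensional_subsolution}\ref{itm:one_dim_subsol_iv}). The chaining of the $H^{-1}$-inequalities and the passage to the infimum for $T_{mix}(\rho_0)\geq T_{mix}^{1sub}(\rho_0)$ are likewise as intended.
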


\subsection{Estimating the decay rate: Steepest descent}\label{sec:stepest_descent}

In order to estimate $T_{mix}^{1sub}(\rho_0)$ we begin by extending the computation leading to the bound \eqref{eq:first_lower_bound} to the level of one-dimensional subsolutions.

\begin{lemma}\label{lem:first_estimate_for_subsolutions}
Let $\rho_0$ be as in Definition \ref{def:one_dimensional_initial_data} and $(\rho,m_1)$ be a corresponding one-dimensional subsolution. Then for almost every $t\in(0,T)$ there holds 
\begin{align}\label{eq:derivative_mixing_norm_1D_subsols}
\frac{d}{dt}\norm{\rho(t,\cdot)}_{H^{-1}(\T)}^2\geq -2\left(\frac{E}{L^{d-1}}\right)^{\frac{1}{2}}\left(\int_{\T}\left(1-\rho(t,x_1)^2\right)(\partial_{x_1}\varphi(t,x_1))^2\:dx_1\right)^{\frac{1}{2}},
\end{align}
where $\varphi$ is the one-dimensional potential associated with $\rho$, i.e. $\partial_{x_1}^2\varphi(t,\cdot)=\rho(t,\cdot)$ for all $t\in[0,T)$.
\end{lemma}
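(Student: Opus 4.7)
The plan is to adapt the energy-balance computation \eqref{eq:computation_first_lower_bound} from the excerpt to the one-dimensional subsolution framework, with the pointwise identity $m=\rho v$ there replaced by the convex-hull inequality in Definition \ref{def:one_dimensional_subsolution}\ref{itm:one_dim_subsol_iv}. First I set up the potential: since $\partial_t\rho+\partial_{x_1}m_1=0$ and $\int_\T\rho_0\,dx_1=0$, the spatial mean of $\rho(t,\cdot)$ is preserved, so the potential $\varphi(t,\cdot)$ determined by $\partial_{x_1}^2\varphi=\rho$ and $\int_\T\varphi\,dx_1=0$ is well defined and periodic in $x_1$, and
\[
\norm{\rho(t,\cdot)}_{H^{-1}(\T)}^2=\int_\T(\partial_{x_1}\varphi(t,x_1))^2\,dx_1.
\]
Formally differentiating in time, integrating by parts twice using the periodicity of $\varphi$, and substituting $\partial_t\rho=-\partial_{x_1}m_1$, one obtains
\[
\frac{d}{dt}\norm{\rho(t,\cdot)}_{H^{-1}(\T)}^2=-2\int_\T m_1(t,x_1)\,\partial_{x_1}\varphi(t,x_1)\,dx_1.
\]

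Next I apply constraint \ref{itm:one_dim_subsol_iv}, which gives $|m_1|\leq\sqrt{e}\sqrt{1-\rho^2}$ almost everywhere, followed by a weighted Cauchy–Schwarz inequality placing the factor $\sqrt{1-\rho^2}$ onto $\partial_{x_1}\varphi$:
\[
\Big|\int_\T m_1\,\partial_{x_1}\varphi\,dx_1\Big|\leq\Big(\int_\T e\,dx_1\Big)^{1/2}\Big(\int_\T(1-\rho^2)(\partial_{x_1}\varphi)^2\,dx_1\Big)^{1/2}.
\]
Constraint \ref{itm:one_dim_subsol_iii} supplies $\int_\T e\,dx_1\leq E/L^{d-1}$, and combining the two displays yields the stated inequality \eqref{eq:derivative_mixing_norm_1D_subsols}.

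The main obstacle is the rigor of the time differentiation, since $\rho$ lies only in $L^\infty_{t,x}$ and $m_1$ only in $L^\infty_tL^2_x$, so neither object has a classical time derivative. This is bypassed by invoking Lemma \ref{lem:mixing_norm_Lipschitz} of Appendix \ref{sec:appendix_lipschitz}, which guarantees (local) Lipschitz regularity of $t\mapsto\norm{\rho(t,\cdot)}_{H^{-1}(\T)}$ and hence a.e. differentiability; the identity for the derivative may then be tested against an arbitrary time cutoff and recovered in the usual way. Equivalently, one may mollify $(\rho,m_1,e)$ in the time variable, establish the identity exactly for the smoothed triple, and pass to the limit; the pointwise bound $m_1^2\leq e(1-\rho^2)$ is preserved by time-averaging because concavity of $(\rho,e)\mapsto\sqrt{e(1-\rho^2)}$ renders the admissible set in $(\rho,m_1,e)$ convex, while \ref{itm:one_dim_subsol_iii} is preserved trivially.
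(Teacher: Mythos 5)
Your proposal is correct and follows essentially the same route as the paper: invoke Lemma \ref{lem:mixing_norm_Lipschitz} to get Lipschitz continuity and the identity $\frac{d}{dt}\norm{\rho(t,\cdot)}_{H^{-1}(\T)}^2=-2\int_\T m_1\partial_{x_1}\varphi\,dx_1$, then combine the hull inequality \ref{itm:one_dim_subsol_iv}, Cauchy--Schwarz, and the energy bound \ref{itm:one_dim_subsol_iii}. The only cosmetic difference is that the paper applies these bounds after averaging over $[t,t+h]$ and then uses Lebesgue differentiation, whereas you estimate directly at almost every fixed $t$, which is equally valid since the relevant null sets in $t$ can be discarded simultaneously.
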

\begin{proof}
By Lemma \ref{lem:mixing_norm_Lipschitz} and Definition \ref{def:one_dimensional_subsolution} \ref{itm:one_dim_subsol_ii} the left-hand side indeed exists at almost every $t\in(0,T)$ and is given by 
\begin{align*}
\frac{d}{dt}\norm{\rho(t,\cdot)}_{H^{-1}(\T)}^2=-2\int_{\T}m_1(t,x_1)\partial_{x_1}\varphi(t,x_1)\:dx_1.
\end{align*}
By Definition \ref{def:one_dimensional_subsolution} \ref{itm:one_dim_subsol_iii},\ref{itm:one_dim_subsol_iv} we then estimate for $h>0$
\begin{align*}
\int_{t}^{t+h}\int_{\T}&m_1\partial_{x_1}\varphi\:dx_1\:ds\leq \int_{t}^{t+h}\int_{\T}\sqrt{e(1-\rho^2)}\abs{\partial_{x_1}\varphi}\:dx_1\:ds\\
&\leq\int_t^{t+h}\left(\int_\T e\:dx_1\right)^{\frac{1}{2}}\left(\int_\T (1-\rho^2)(\partial_{x_1}\varphi)^2\:dx_1\right)^{\frac{1}{2}}\:ds\\
&\leq \left(\frac{E}{L^{d-1}}\right)^{\frac{1}{2}}\int_t^{t+h}\left(\int_\T (1-\rho^2)(\partial_{x_1}\varphi)^2\:dx_1\right)^{\frac{1}{2}}\:ds.
\end{align*}
Dividing by $h$ and passing to $h\rightarrow 0$, which is possible at almost every $t\in(0,T)$ since $(1-\rho^2)(\partial_{x_1}\varphi)^2\in L^\infty(0,T;L^1(\T))$, one arrives at \eqref{eq:derivative_mixing_norm_1D_subsols}.
\end{proof}

\begin{remark}\label{rem:sharpness_first_bound}
As the proof shows the above estimate is sharp for
\begin{align}\label{eq:sharp_choice_of_m_1D}
m_1(t,x_1)=\lambda(t)\left(1-\rho(t,x_1)^2\right)\partial_{x_1}\varphi(t,x_1),\\
e(t,x_1)=\lambda(t)^2\left(1-\rho(t,x_1)^2\right)\partial_{x_1}\varphi(t,x_1)^2,\label{eq:sharp_choice_of_e}
\end{align}
where $\lambda(t)$ is (formally when $\abs{\rho(t,\cdot)}=1$ or $\partial_{x_1}\varphi(t,\cdot)=0$ almost everywhere) defined as 
\begin{equation}\label{eq:lambda_of_t}
\lambda(t):=\left(\frac{E}{L^{d-1}}\right)^{\frac{1}{2}}\left(\int_{\T}\left(1-\rho(t,x_1)^2\right)(\partial_{x_1}\varphi(t,x_1))^2\:dx_1\right)^{-\frac{1}{2}}.
\end{equation}
Plugging this $m_1$ into the linear equation in Definition \ref{def:one_dimensional_subsolution} \ref{itm:one_dim_subsol_ii} one obtains that the in the sense of Lemma \ref{lem:first_estimate_for_subsolutions} steepest descending one-dimensional subsolution is characterized through the one-dimensional nonlocal conservation law
\begin{equation}\label{eq:1D_nonlocal_conservation_law}
\partial_t\rho+\lambda(t)\partial_{x_1}\left((1-\rho^2)\partial_{x_1}\varphi\right)=0,\quad \partial_{x_1}^2\varphi=\rho.
\end{equation}
This equation can be seen as the analogue of the instantaneously optimal stirring velocity field discussed in \cite[Section 4]{Lin_Thiffeault_Doering_2011}, but on the level of one-dimensional subsolutions instead of actual solutions. 

We also like to mention that equation \eqref{eq:1D_nonlocal_conservation_law}, at least if $\lambda(t)\equiv 1$, which in a first step can be achieved by rescaling time, fits very well into the class of nonlocal conservation laws considered by Amorim \cite{Amorim_2012}, who proves existence and $L^1$-stability estimates of entropy solutions emanating from BV initial data. 
\end{remark}

\subsection{Estimating the decay rate: Variational problems}\label{sec:variational_problem}

We continue to estimate the right-hand side of \eqref{eq:derivative_mixing_norm_1D_subsols}. In the following let us for simplicity write $x$ instead of $x_1$ and denote the spatial derivative $\partial_x w$ by $w_x$. For $h>0$ we introduce the set of functions 
\begin{equation}\label{eq:definition_of_X}
X_h:=\set{w\in \Lip(\T):\norm{w_x}_{L^\infty(\T)}\leq 1,~\int_{\T}w\:dx=0,~\int_{\T}w^2\:dx=h^2},
\end{equation}
as well as the functional $F:H^1(\T)\rightarrow \R$,
\begin{equation}\label{eq:definition_of_F}
F(w):=\int_{\T}(1-w_x^2)w^2\:dx.
\end{equation}
We remark that $F$ is well-defined, since $H^1(\T)$ embeds into the space of $\frac{1}{2}$-H\"older continuous, and thus bounded, functions.
Moreover, we will see in Lemma \ref{lem:estimate_L2_norm}  that the set $X_h$ is nonempty precisely for $h\in[0,L^\frac{3}{2}/\sqrt{48}]$.

Now for a one-dimensional subsolution $(\rho,m_1)$ with potential $\varphi$ we set $w:=\partial_{x_1}\varphi(t,\cdot)=\varphi_x(t,\cdot)$ to conclude the following statement from Lemma \ref{lem:first_estimate_for_subsolutions} and  Corollary \ref{cor:estimate_Hminus1_norm} below.
\begin{lemma}\label{lem:abstract_infimum}
Let $(\rho,m_1)$ be a one-dimensional subsolution and let us abbreviate $h(t):=\norm{\rho(t,\cdot)}_{H^{-1}(\T)}$. Then $h(t)^2\in[0,L^3/48]$, $t\in[0,T)$ and for almost every $t\in(0,T)$ there holds
\begin{align*}
\frac{d}{dt}h(t)^2\geq-2\left(\frac{E}{L^{d-1}}\right)^{\frac{1}{2}}\left(\sup_{w\in X_{h(t)}}F(w)\right)^{\frac{1}{2}}.
\end{align*}
\end{lemma}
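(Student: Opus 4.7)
The plan is to combine Lemma~\ref{lem:first_estimate_for_subsolutions} with the definition of $X_{h(t)}$ by recognizing that, at each admissible time $t$, the gradient of the one-dimensional potential $w := \varphi_x(t,\cdot)$ is an admissible competitor in the variational problem $\sup_{w \in X_{h(t)}} F(w)$. Once this is verified, the integrand appearing in \eqref{eq:derivative_mixing_norm_1D_subsols} is precisely $F(w)$, hence bounded by the supremum, and the desired differential inequality is immediate.

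To carry this out, I would fix $t \in (0,T)$ in the full-measure set on which \eqref{eq:derivative_mixing_norm_1D_subsols} is valid, let $\varphi(t,\cdot)$ be the mean-zero periodic solution of $\varphi_{xx}(t,\cdot) = \rho(t,\cdot)$, and set $w := \varphi_x(t,\cdot)$. The three defining properties of $X_{h(t)}$ then follow in order: first, $w$ is Lipschitz with $\|w_x\|_{L^\infty(\T)} = \|\rho(t,\cdot)\|_{L^\infty(\T)} \leq 1$ by Definition~\ref{def:one_dimensional_subsolution}\ref{itm:one_dim_subsol_i}; second, $\int_\T w \, dx$ vanishes because $w$ is the derivative of a periodic function; and third, the standard isometry gives $\int_\T w^2 \, dx = \|\varphi_x(t,\cdot)\|_{L^2(\T)}^2 = \|\rho(t,\cdot)\|_{H^{-1}(\T)}^2 = h(t)^2$. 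Moreover, since $\rho \in L^\infty$ one has $w \in H^1(\T)$, so $F(w)$ is well-defined in view of the $H^1(\T) \hookrightarrow \mathcal{C}^0(\T)$ embedding already noted after \eqref{eq:definition_of_F}.

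With $w \in X_{h(t)}$ established, the substitution
\[
\int_{\T} \bigl(1 - \rho(t,x)^2\bigr)\bigl(\partial_x \varphi(t,x)\bigr)^2 \, dx = \int_{\T} \bigl(1 - w_x^2\bigr) w^2 \, dx = F(w) \leq \sup_{w' \in X_{h(t)}} F(w')
\]
plugged into \eqref{eq:derivative_mixing_norm_1D_subsols} yields the claimed pointwise-in-$t$ bound on $\frac{d}{dt} h(t)^2$.

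Finally, for the a priori range $h(t)^2 \in [0, L^3/48]$: integrating the conservation law in Definition~\ref{def:one_dimensional_subsolution}\ref{itm:one_dim_subsol_ii} against the constant function $1$ shows that $\int_\T \rho(t,\cdot)\, dx$ is independent of $t$, and hence equal to $\int_\T \rho_0\, dx = 0$ by \eqref{eq:rho_0_conditions}. Combined with $\|\rho(t,\cdot)\|_{L^\infty(\T)} \leq 1$, this places $\rho(t,\cdot)$ in the class to which Corollary~\ref{cor:estimate_Hminus1_norm} applies, supplying the upper bound $L^3/48$. I expect no real obstacle: the content of the lemma is organizational, the only subtlety being to ensure that the pointwise-in-time manipulations are carried out at times where both the derivative on the left-hand side of \eqref{eq:derivative_mixing_norm_1D_subsols} and the $L^1$-integrability of $(1-\rho^2)\varphi_x^2$ are simultaneously available, which Lemma~\ref{lem:first_estimate_for_subsolutions} already guarantees on a full-measure set.
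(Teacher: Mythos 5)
Your proposal is correct and follows essentially the same route as the paper, which obtains the lemma directly from Lemma~\ref{lem:first_estimate_for_subsolutions} and Corollary~\ref{cor:estimate_Hminus1_norm} by observing that $w=\varphi_x(t,\cdot)$ lies in $X_{h(t)}$ so that the integral in \eqref{eq:derivative_mixing_norm_1D_subsols} equals $F(w)\leq\sup_{X_{h(t)}}F$. Your verification of the three membership conditions for $X_{h(t)}$ and of the conservation of the zero mean needed for Corollary~\ref{cor:estimate_Hminus1_norm} just makes explicit what the paper leaves implicit.
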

It thus remains to investigate the stated supremum for which we will show the following statement.
\begin{lemma}\label{lem:supremum}
For $h\in [0,L^{\frac{3}{2}}/\sqrt{48}]$ the supremum of $F$ over $X_h$ is given by
\begin{align*}
\sup_{w\in X_h}F(w)=h^2-\frac{L^3}{48}\left(1-\sqrt{1-\frac{h^2}{L^3/48}}\right)^2.
\end{align*}
\end{lemma}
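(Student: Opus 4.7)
My approach would start from the observation $F(w) = \int_{\T} w^2\,dx - \int_{\T} w_x^2 w^2\,dx = h^2 - J(w)$ with $J(w) := \int_{\T} w^2 w_x^2\,dx = \tfrac{1}{4}\int_{\T}((w^2)_x)^2\,dx$. Maximizing $F$ over $X_h$ is therefore equivalent to minimizing $J$. It is then natural to pass to $u := w^2$, under which the constraint $\|w_x\|_{L^\infty}\leq 1$ translates to the pointwise inequality $u_x^2 \leq 4u$ (equivalent to $|(\sqrt u)_x|\leq 1$); the problem becomes: minimize $\tfrac{1}{4}\int_{\T}u_x^2\,dx$ over $u\in\Lip(\T)$ with $u\geq 0$, $\int_{\T}u = h^2$, $u_x^2\leq 4u$ pointwise, and the structural constraint that signs can be chosen on the connected components of $\{u>0\}$ to produce a continuous $w$ with $|w| = \sqrt{u}$ and $\int_{\T}w = 0$.

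I would then apply two rearrangement steps. The first replaces $u$ on each connected component $I_k$ of $\{u>0\}$ by its symmetric decreasing rearrangement about the midpoint of $I_k$. Since $u$ vanishes at the endpoints of each $I_k$ by continuity, the rearranged function remains continuous on $\T$, the Lipschitz bound on $\sqrt u$ is preserved, and the P\'olya--Szeg\H{o} inequality applied on each interval $I_k$ decreases $\int u_x^2$; meanwhile $\int_{I_k}u$ and $\int_{I_k}\sqrt u$ are untouched so the sign pattern still yields $\int w = 0$. The second reduction uses convexity of $J$ combined with a comparison between configurations having different numbers of components, to reduce to the case where $\{u>0\}$ has exactly two intervals on which $w$ has opposite signs and which are related by $w(x+L/2) = -w(x)$. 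I expect this second step---ruling out the multi-component configurations and equalizing the positive and negative parts---to be the main obstacle, because the sign-choice requirement is genuinely nonconvex; one likely exploits the convexity of $u\mapsto\int u_x^2$ on the convex admissible set $\{u\geq 0:\,u_x^2\leq 4u,\,\int_{\T}u = h^2\}$ together with a direct redistribution argument.

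Once the reduction to an antisymmetric $w$ is in place, the problem decouples on $[0,L/2]$ to: minimize $\tfrac{1}{4}\int_0^{L/2}u_x^2\,dx$ over $u\geq 0$ satisfying $u(0)=u(L/2)=0$, $u_x^2\leq 4u$, and $\int_0^{L/2}u = h^2/2$. The Euler--Lagrange equation with Lagrange multiplier $\mu>0$ for the integral constraint dictates $u_{xx} = -\mu/2$ in the interior region $\{u_x^2<4u\}$, so $u = u_0 - \tfrac{\mu}{4}(x-L/4)^2$ there, while near the endpoints the constraint is active and enforces $u = x^2$ on $[0,x^*]$ and $u = (L/2-x)^2$ on $[L/2-x^*,L/2]$. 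Smooth $C^1$-matching at $x^*$ yields $x^* = \mu L/(4(\mu+4))$ and $u_0 = L^2\mu/(16(\mu+4))$; the integral constraint then produces the parametrization $h^2 = L^3\mu(\mu+8)/(48(\mu+4)^2)$, and a direct computation delivers $J_{\min} = L^3\mu^2/(48(\mu+4)^2)$. Finally, the identity $1 - h^2/(L^3/48) = 16/(\mu+4)^2$ converts the $\mu$-parametrization into the closed form $\sup_{w\in X_h}F(w) = h^2 - (L^3/48)(1 - \sqrt{1-h^2/(L^3/48)})^2$ claimed in the lemma, with global optimality following from convexity of $J$ on the reduced admissible set.
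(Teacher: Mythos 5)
Your overall architecture mirrors the paper's: the reformulation $\sup_{X_h}F=h^2-\inf J$, the passage to $u=w^2$ and convexity of $u\mapsto\int u_x^2$, the reduction to a single bump on a half-period, and the explicit active-set/parabola profile. Your computations in the reduced problem are correct and consistent with the paper's minimizer $w^{\alpha(h)}$: with $\alpha^2=\mu/(\mu+4)$ one has $x^*=\alpha^2L/4$, and your parametrization $h^2=L^3\mu(\mu+8)/(48(\mu+4)^2)$, $J_{\min}=L^3\mu^2/(48(\mu+4)^2)$ together with $1-h^2/(L^3/48)=16/(\mu+4)^2$ does reproduce the claimed closed form; the a posteriori global optimality via the first-variation inequality $\int u_x^2-\int\bar u_x^2\ge-2\int\bar u_{xx}(u-\bar u)$, using that the constraint forces $u\le x^2$ near the endpoints, is exactly how the paper argues in Lemma \ref{lem:reduced_var_problem}.

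The genuine gap is the step you yourself flag: the reduction to a configuration with exactly two sign-alternating bumps related by $w(\cdot+L/2)=-w(\cdot)$. Your first rearrangement is done componentwise on $\{u>0\}$, so it does not decrease the number of components, and nothing in your sketch equalizes the positive and negative parts; convexity of $\int u_x^2$ on the set $\{u\ge0,\,u_x^2\le4u,\,\int u=h^2\}$ cannot close this, precisely because the constraint $\int_\T w\,dx=0$ with $|w|=\sqrt u$ and a sign choice per component is not a convex constraint on $u$. The paper resolves this with two specific devices. First, in Lemma \ref{lem:existence_of_symmetric_decreasing_competitor} the symmetric decreasing rearrangement is applied globally to the positive and negative parts $w^\pm$ (not per component), which already merges everything into one positive and one negative bump while preserving the zero mean, the $L^2$-norm and the Lipschitz bound, and not increasing $\int w^2w_x^2$. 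Second, instead of proving antisymmetry of the optimizer, Lemmas \ref{lem:odd_rearr} and \ref{lem:application_of_odd_rearrangement} (the odd rearrangement of Cabr\'e et al.) show one may \emph{relax} the problem to the class $\cW_0$ where only $w(0)=0$ is imposed and the zero-mean constraint is dropped; the relaxed minimizer is odd, hence admissible for the original problem, so the relaxation is exact (Lemma \ref{lem:reduction_of_actual_variational_problem}). Without an argument of this kind your ``reduce to the antisymmetric two-interval case'' remains an unproven claim, and it is the mathematically substantive part of the lemma; the Euler--Lagrange computation that follows it is, as you found, routine.
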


\subsubsection{Symmetric decreasing rearrangement}
In order to estimate $F(w)$ for $w\in X_h$ we will rely on symmetries. 
First of all we recall, see \cite[Proposition 1.30]{Baernstein_book_2019}, that the symmetric decreasing rearrangement of a Lebesgue measurable function $f:\R\rightarrow[0,\infty)$ with 
\begin{equation}\label{eq:condition_for_symmetric_decreasing_rearrangement}
\lambda_f(t):=\abs{\set{x\in\R:f(x)>t}}<\infty\quad\text{for all }t>\essinf f
\end{equation}
is uniquely characterized as the everywhere defined function $f^\sharp:\R\rightarrow\R$ that is equidistributed with $f$, i.e. $\lambda_f(t)=\lambda_{f^\sharp}(t)$ for all $t\in\R$, as well as even, non-increasing on $[0,\infty)$ and right continuous on $(0,\infty)$. The main properties of $f^\sharp$ that we will use are
\begin{gather}
\label{eq:symm_rearr_Lp_norms}
\int_{\R}f^p\:dx=\int_{\R}(f^\sharp)^p\:dx,~p\in[1,\infty),
\end{gather}
and that it decreases the modulus of continuity, as well as Dirichlet type integrals. More precisely, let $f\in \Lip(\R)$, $f\geq 0$ satisfy \eqref{eq:condition_for_symmetric_decreasing_rearrangement}, then for $1\leq p\leq \infty$ there holds
\begin{equation}\label{eq:decrease_of_W1p_norms}
\norm{f^\sharp_x}_{L^p(\R)}\leq \norm{f_x}_{L^p(\R)},
\end{equation}
see \cite[Theorems 3.6, 3.7]{Baernstein_book_2019}. For finite $p\geq 1$ this also holds true for functions just in the corresponding Sobolev space, \cite[Theorem 3.20]{Baernstein_book_2019}, but we will deal with Lipschitz functions anyway. Moreover, since $f\geq 0$ there also holds
\begin{gather}
\label{eq:symm_rearr_quadrat} (f^\sharp)^2=(f^2)^\sharp.
\end{gather}

The next lemma states that we can always find a symmetric competitor. We define
\begin{align}\label{eq:definition_of_X_without_h}
X:=\set{w\in \Lip(\T):\norm{w_x}_{L^\infty(\T)}\leq 1,~\int_{\T}w\:dx=0}.
\end{align}
\begin{lemma}\label{lem:existence_of_symmetric_decreasing_competitor}
Let $w\in X$. Then there exists $\tilde{w}\in X$ such that $\tilde{w}$ when restricted to $(-L/2,L/2)$ is even and non-increasing on $[0,L/2)$ and such that 
\begin{gather}\label{eq:symm_rearr_conserves_L2}
\int_{\T}\tilde{w}^2\:dx=\int_{\T}w^2\:dx,\\
\label{eq:decreasing_through_symmetric_rearrangement}
\int_{\T}\tilde{w}^2\tilde{w}_x^2\:dx\leq \int_{\T}w^2w_x^2\:dx,\\
\label{eq:conservation_of_inf_sup_symm_rearr}
\inf \tilde{w}=\inf w,\quad \sup \tilde{w}=\sup w.
\end{gather}
\end{lemma}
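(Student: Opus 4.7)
The plan is to build $\tilde w$ by applying the symmetric decreasing rearrangement to the positive and negative parts of $w$ separately and gluing the two pieces on $\T$ so that the positive bump sits around $x = 0$ and the negative bump sits around the antipodal point $\pm L/2$. If $w \equiv 0$ one takes $\tilde w := 0$, so assume $w \not\equiv 0$; then $\int_\T w = 0$ forces both $w_+ := \max(w, 0)$ and $w_- := \max(-w, 0)$ to be nontrivial non-negative $1$-Lipschitz functions with disjoint supports on $\T$. Setting $\alpha := \abs{\set{w > 0}}/2$ and $\beta := \abs{\set{w < 0}}/2$, I define
\begin{equation*}
\tilde w(x) := \begin{cases} w_+^\sharp(x), & \abs{x} \leq \alpha,\\ 0, & \alpha \leq \abs{x} \leq L/2 - \beta,\\ -w_-^\sharp(L/2 - \abs{x}), & L/2 - \beta \leq \abs{x} \leq L/2, \end{cases}
\end{equation*}
where $w_+^\sharp$, $w_-^\sharp$ are the symmetric decreasing rearrangements on $\R$ of the compactly supported non-negative functions $w_+$, $w_-$ (with supports $[-\alpha,\alpha]$ and $[-\beta,\beta]$).

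Evenness and monotonicity on $[0, L/2)$ are built in. Since non-negative Lipschitz rearrangements vanish at the edge of their supports ($w_+^\sharp(\pm\alpha) = 0$ and $w_-^\sharp(\pm\beta) = 0$), $\tilde w$ is continuous at the gluing points $\pm\alpha$, $\pm(L/2-\beta)$, and by the even dependence on $\abs{x}$ also across the identified endpoints $\pm L/2$ of $\T$. The identities $\int_\T \tilde w = 0$, \eqref{eq:symm_rearr_conserves_L2} and \eqref{eq:conservation_of_inf_sup_symm_rearr} then follow from equimeasurability \eqref{eq:symm_rearr_Lp_norms} applied to $w_+, w_-$ (the disjointness of supports gives $\int w^2 = \int w_+^2 + \int w_-^2$, with the analogous splitting for $\tilde w$). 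The Lipschitz bound $\norm{\tilde w_x}_{L^\infty(\T)} \leq 1$ follows from \eqref{eq:decrease_of_W1p_norms} with $p = \infty$ applied separately to $w_+, w_-$, together with the continuity of $\tilde w$ across the gluing seams.

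For the key inequality \eqref{eq:decreasing_through_symmetric_rearrangement} the pointwise identity $4 w^2 w_x^2 = ((w^2)_x)^2$ is decisive. Since $w_+$ and $w_-$ have disjoint supports,
\begin{equation*}
\int_\T w^2 w_x^2 \, dx = \int_\T w_+^2 (w_+)_x^2 \, dx + \int_\T w_-^2 (w_-)_x^2 \, dx,
\end{equation*}
and analogously $\int_\T \tilde w^2 \tilde w_x^2 \, dx = \int_\R (w_+^\sharp)^2 ((w_+^\sharp)_x)^2 \, dx + \int_\R (w_-^\sharp)^2 ((w_-^\sharp)_x)^2 \, dx$ (the flat gap region contributes zero). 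Applying \eqref{eq:decrease_of_W1p_norms} with $p = 2$ to the non-negative function $w_\pm^2$ and using \eqref{eq:symm_rearr_quadrat} to identify $(w_\pm^2)^\sharp = (w_\pm^\sharp)^2$ yields $\int (((w_\pm^\sharp)^2)_x)^2 \leq \int ((w_\pm^2)_x)^2$; dividing by $4$ (via $((w_\pm^\sharp)^2)_x = 2 w_\pm^\sharp (w_\pm^\sharp)_x$ and the analogue for $w_\pm$) gives $\int (w_\pm^\sharp)^2 ((w_\pm^\sharp)_x)^2 \leq \int w_\pm^2 (w_\pm)_x^2$, and summing the two estimates proves \eqref{eq:decreasing_through_symmetric_rearrangement}.

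The main obstacle is the torus geometry. The cited Polya--Szego machinery lives on $\R$, whereas $\tilde w$ must be even on $\T$, monotone on $[0, L/2)$, and Lipschitz across two gluing seams and the identified endpoints $\pm L/2$. The specific choice that matches the supports of $w_+^\sharp$ and $w_-^\sharp$ to the two complementary arcs of $\T$, combined with the vanishing of non-negative rearrangements at the boundary of their supports, is precisely what allows the gluing to take place without increasing the Lipschitz constant; all other properties then reduce to standard equimeasurability arguments and a direct application of $L^2$-Polya--Szego to $w_\pm^2$.
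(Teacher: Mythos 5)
Your proposal is correct and follows essentially the same route as the paper: split $w$ into its positive and negative parts, symmetrically rearrange each, glue the positive bump at the origin and the negative bump at the antipodal point, and obtain \eqref{eq:decreasing_through_symmetric_rearrangement} from the identity $4w^2w_x^2=\left((w^2)_x\right)^2$ combined with \eqref{eq:symm_rearr_quadrat} and \eqref{eq:decrease_of_W1p_norms} for $p=2$, with the remaining properties following from equimeasurability. The only details worth making explicit are the preliminary cut of the torus at a zero of $w_\pm$ (the paper's translation so that $w(\pm L/2)=0$), which turns $w_\pm$ into compactly supported Lipschitz functions on $\R$ to which the cited rearrangement theorems actually apply (the periodic extension fails \eqref{eq:condition_for_symmetric_decreasing_rearrangement}), and the inequality $\alpha+\beta\le L/2$ (the paper's \eqref{eq:relation_supports}) which makes your case distinction in the definition of $\tilde w$ consistent.
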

\begin{proof}
Let us abbreviate $l:=L/2$. After a translation we can assume that $w(\pm l)=0$ such that the two functions $\hat{w}_\pm:\R\rightarrow [0,\infty)$,\begin{align*}
\hat{w}_\pm(x)=\begin{cases}
w^\pm(x),&x\in(-l,l),\\
0,&x\notin(-l,l)
\end{cases}
\end{align*}
are still Lipschitz continuous with Lipschitz constant $\leq 1$. Here $w^\pm\geq 0$ denote the positive and negative part of $w$. Since $\hat{w}_\pm$ satisfy \eqref{eq:condition_for_symmetric_decreasing_rearrangement}, their symmetric decreasing rearrangements $v_\pm:=(\hat{w}_\pm)^\sharp:\R\rightarrow [0,\infty)$ are well-defined. By \eqref{eq:decrease_of_W1p_norms} for $p=\infty$ they are also Lipschitz continuous with Lipschitz constant $\leq 1$.

Since $v_\pm$ and $w_\pm$ are equidistributed we observe that $a_1:= \abs{\supp v_+}/2$, $a_2:=\abs{\supp v_-}/2$ satisfy
\begin{align}\label{eq:relation_supports}
a_1+a_2\leq l.
\end{align}

We then define $\tilde{w}:\T\rightarrow\R$ as the $2l$-periodic extension of $v:[-l,l]\rightarrow\R$,
\begin{align*}
v(x)=v_+(x)-v_-(x-l)-v_-(x+l).
\end{align*} 
As a consequence of \eqref{eq:relation_supports} and the monotonicity properties of $v_\pm$ there holds
\begin{align*}
v(x)=\begin{cases}
-v_-(x+l),&x\in[-l,-l+a_2),\\
v_+(x),&x\in(-a_1,a_1),\\
-v_-(x-l),&x\in (l-a_2,l],\\
0,&\text{otherwise}.
\end{cases}
\end{align*}
In particular $v(l)=v(-l)=-v_-(0)$, which implies that $\tilde{w}$ is well-defined as a Lipschitz continuous function with $\norm{\tilde{w}_x}_{L^\infty(\T)}\leq 1$.

Next we will verify the three requested integral (in)equalities $\int_{\T}\tilde{w}\:dx=0$, \eqref{eq:symm_rearr_conserves_L2} and \eqref{eq:decreasing_through_symmetric_rearrangement}. Actually we will do it only for \eqref{eq:decreasing_through_symmetric_rearrangement}, the other two can be seen in the same way by use of \eqref{eq:symm_rearr_Lp_norms}.
By translation of the two integrals where $v(x)=-v_-(x\pm l)$ there holds
\begin{align*}
\int_{\T}\tilde{w}^2\tilde{w}_x^2\:dx&=\int_{-a_1}^{a_1}v_+^2v_{+,x}^2\:dx+\int_{-a_2}^{a_2}v_-^2v_{-,x}^2\:dx\\
&=\int_{\R}\left(\left(\frac{1}{2}\left((\hat{w}_+)^\sharp\right)^2\right)_x\right)^2\:dx+\int_{\R}\left(\left(\frac{1}{2}\left((\hat{w}_-)^\sharp\right)^2\right)_x\right)^2\:dx.
\end{align*}
Now swapping symmetrization with squaring, cf. \eqref{eq:symm_rearr_quadrat}, and \eqref{eq:decrease_of_W1p_norms} for $p=2$ imply
\begin{align*}
\int_{\T}\tilde{w}^2\tilde{w}_x^2\:dx&\leq \int_{\R}\left(\left(\frac{1}{2}\left(\hat{w}_+\right)^2\right)_x\right)^2\:dx+\int_{\R}\left(\left(\frac{1}{2}\left(\hat{w}_-\right)^2\right)_x\right)^2\:dx\\
&=\int_{-l}^l(w^+)^2(w^+_x)^2+(w^-)^2(w^-_x)^2\:dx=\int_{\T}w^2w_x^2\:dx.
\end{align*}
Finally it is also easy to check that $v$ is even with respect to the origin and non-increasing on $[0,l]$.
\end{proof}
\subsubsection{Odd rearrangement}

In addition to the symmetric decreasing rearrangement $f^\sharp$ we will also use the odd rearrangement of a strictly increasing function introduced by Cabr\'e et al. \cite{Cabre_etal_antisymmetry_2018}. Again we will state its definition and its properties taylored to our needs and not in full generality. The interested reader should consult \cite{Cabre_etal_antisymmetry_2018} for more details.
\begin{definition}\label{def:odd_rearrangement}
Let $a>0$, $f\in \Lip([-a,a])$ be such that $\essinf f_x>0$ and $f(-a)=-f(a)$. Its flipped $f_*\in \Lip([-a,a])$ is defined as $f_*(x):=-f(-x)$. The odd rearrangement $f^o:[-a,a]\rightarrow\R$ of $f$ is defined as the inverse function of $\frac{1}{2}f^{-1}+\frac{1}{2}\left(f^{-1}\right)_*$, where $f^{-1}$ is the inverse of $f$.
\end{definition}
\begin{lemma}\label{lem:odd_rearr} For $f$ as in Definition \ref{def:odd_rearrangement} the odd rearrangement $f^o$ has the following properties:
\begin{enumerate}[(i)]
\item \label{itm:odd_rearr_i} $f^o$ is odd and $f^o(a)=f(a)$,
\item \label{itm:odd_rearr_ii} $f^o\in \Lip([-a,a])$ with $\essinf f_x\leq \essinf f^o_x\leq \esssup f^o_x\leq \esssup f_x$,
\item \label{itm:odd_rearr_iii} for every even continuous function $G:[-f(a),f(a)]\rightarrow\R$ there holds 
\[
\int_{-a}^aG\circ f^o\:dx=\int_{-a}^aG\circ f\:dx,
\]
\item \label{itm:odd_rearr_iv} for any even continuous $G:[-f(a),f(a)]\rightarrow [0,\infty)$ there holds
\[
\int_{-a}^a(f^o_x)^2G\circ f^o\:dx\leq \int_{-a}^a(f_x)^2G\circ f\:dx,
\]
and if $G$ is strictly positive almost everywhere, then equality holds if and only if $f=f_*=f^o$.
\end{enumerate}
\end{lemma}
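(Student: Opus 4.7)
The plan is to analyze everything through the auxiliary function $g:=\tfrac{1}{2}f^{-1}+\tfrac{1}{2}(f^{-1})_*$, of which $f^o$ is by definition the inverse, together with its derivative $h:=(f^{-1})_y$. Using $(f^{-1})_*(y)=-f^{-1}(-y)$, I would first read off that $g(y)=\tfrac{1}{2}(f^{-1}(y)-f^{-1}(-y))$ is odd; the hypothesis $\essinf f_x>0$ makes $f^{-1}$ Lipschitz with a strictly positive essential lower bound on $h$, so $g$ is strictly increasing and $f^o=g^{-1}$ is well-defined and Lipschitz. The boundary condition $f(-a)=-f(a)$ gives $g(\pm f(a))=\pm a$, so $f^o:[-a,a]\to[-f(a),f(a)]$ is odd with $f^o(a)=f(a)$, establishing \ref{itm:odd_rearr_i}. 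Differentiating yields $g_y(y)=\tfrac{1}{2}(h(y)+h(-y))$ almost everywhere, and the identity $(f^o)_x(x)=1/g_y(f^o(x))$ reveals $(f^o)_x$ as the harmonic mean of $f_x$ evaluated at the two preimages $f^{-1}(\pm f^o(x))$; since the harmonic mean of positive numbers is trapped between their minimum and maximum, \ref{itm:odd_rearr_ii} follows.

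For \ref{itm:odd_rearr_iii} and \ref{itm:odd_rearr_iv} I would change variables $y=f^o(x)$ (with $dx=g_y\,dy$), respectively $y=f(x)$ (with $dx=h\,dy$), turning $\int_{-a}^{a}G\circ f^o\,dx$ and $\int_{-a}^{a}G\circ f\,dx$ into $\int G\,g_y\,dy$ and $\int G\,h\,dy$, and their weighted counterparts into $\int G/g_y\,dy$ and $\int G/h\,dy$. Exploiting that $G$ is even, I would symmetrize the $h$-integrals: $\int G(y)h(y)\,dy=\tfrac{1}{2}\int G(y)(h(y)+h(-y))\,dy=\int G(y)g_y(y)\,dy$, which is \ref{itm:odd_rearr_iii}, and $\int G/h\,dy=\tfrac{1}{2}\int G(y)\bigl(1/h(y)+1/h(-y)\bigr)\,dy$. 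The inequality \ref{itm:odd_rearr_iv} then reduces to the pointwise AM-HM inequality
\[
\frac{2}{a+b}\leq \frac{1}{2}\left(\frac{1}{a}+\frac{1}{b}\right),\quad a,b>0,
\]
with equality if and only if $a=b$, applied to $a=h(y)$, $b=h(-y)$ and integrated against $G\geq 0$.

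The main delicate point I anticipate is the equality statement in \ref{itm:odd_rearr_iv}. Under $G>0$ almost everywhere, equality in the integrated AM-HM inequality forces $h(y)=h(-y)$ for almost every $y$, i.e.\ $h$ is even. Then $y\mapsto f^{-1}(y)-f^{-1}(0)$ is an odd antiderivative of $h$, so $f^{-1}(y)+f^{-1}(-y)=2f^{-1}(0)$; evaluating at $y=f(a)$ and using $f(-a)=-f(a)$ pins $f^{-1}(0)=0$, hence $f^{-1}$ (and therefore $f$) is odd, so $f=f_*$. In that case $(f^{-1})_*=f^{-1}$, so $g=f^{-1}$ and $f^o=f$, as required. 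Everything apart from this propagation from pointwise AM-HM equality plus boundary symmetry to global oddness is standard Lipschitz change of variables together with the elementary numerical AM-HM inequality, and I would not dwell on those steps.
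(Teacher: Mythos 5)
Your proposal is correct and follows essentially the same route as the paper: work with $g=\tfrac12 f^{-1}+\tfrac12(f^{-1})_*$, change variables to rewrite both integrals over $[-f(a),f(a)]$, symmetrize using evenness of $G$, and conclude (iv) from the pointwise convexity of $z\mapsto 1/z$ (your AM–HM formulation is the same inequality), with the equality case forcing $(f^{-1})'(y)=(f^{-1})'(-y)$ a.e.\ and then $f=f_*=f^o$ via the endpoint condition. Your harmonic-mean reading of $(f^o)_x$ for (ii) and the odd-antiderivative phrasing of the equality case are just equivalent reformulations of the paper's argument.
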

\begin{proof}
First of all note that the condition $f(-a)=-f(a)$ implies that the map $g:=\frac{1}{2}f^{-1}+\frac{1}{2}(f^{-1})_*$ is well-defined from $[-f(a),f(a)]$ to $[-a,a]$. It is easy to see that $g$ is odd and $g(f(a))=a$. Thus $f^o=g^{-1}$ satisfies \ref{itm:odd_rearr_i}. Also \ref{itm:odd_rearr_ii} follows in a straightforward way by the properties of Lipschitz maps and the fact that $g$ is a convex combination of $f^{-1}$ and $(f^{-1})_*$. Property \ref{itm:odd_rearr_iii} can be seen via change of variables.

Also in \ref{itm:odd_rearr_iv} we transform the integrals to rewrite
\begin{align*}
\int_{-a}^a (f^o_x)^2G\circ f^o\:dx=\int_{-f(a)}^{f(a)}\frac{G(y)}{g'(y)}\:dy=\int_{-f(a)}^{f(a)}\frac{G(y)}{\frac{1}{2}(f^{-1})'(y)+\frac{1}{2}((f^{-1})_*)'(y)}\:dy,
\end{align*}
as well as
\begin{align*}
\int_{-a}^a f_x^2G\circ f\:dx&=\frac{1}{2}\int_{-a}^af_x^2G\circ f\:dx+\frac{1}{2}\int_{-a}^a(f_{*,x})^2G\circ f_*\:dx\\
&=\frac{1}{2}\int_{-f(a)}^{f(a)}\frac{G(y)}{(f^{-1})'(y)}\:dy+\frac{1}{2}\int_{-f(a)}^{f(a)}\frac{G(y)}{((f_*)^{-1})'(y)}\:dy.
\end{align*}
Since $(f_*)^{-1}=(f^{-1})_*$ the stated inequality follows by pointwise comparison and the convexity of $(0,\infty)\ni z\mapsto 1/z\in\R$.
 
Moreover, if $G$ is strictly positive almost everywhere, by using the strict convexity of $1/z$ one sees that equality implies $(f^{-1})'(y)=((f_*)^{-1})'(y)$ for almost every $y\in (-f(a),f(a))$. Due to $f(a)=f_*(a)$ it follows $f^{-1}=(f_*)^{-1}$ and thus $f=f_*=f^o$.
\end{proof}

In order to apply the odd rearrangement we introduce the following two sets of functions
\begin{align}\label{eq:definition_W}
\cW&:=\Big\{w\in \Lip([-L/4,L/4]):0\leq w_x\leq 1\text{ a.e.,~}\int_{-L/4}^{L/4}w\:dx=0\Big\},\\\label{eq:definition_W0}
\cW_0&:=\Big\{w\in \Lip([-L/4,L/4]):0\leq w_x\leq 1\text{ a.e.,~}w(0)=0\Big\}.
\end{align}
\begin{lemma}\label{lem:application_of_odd_rearrangement}
For each $w\in\cW\setminus\cW_0$ there exists $\tilde{w}\in \cW_0$ with
\begin{align}\label{eq:odd_rearr_conserves_L2}
\int_{-L/4}^{L/4}\tilde{w}^2\:dx=\int_{-L/4}^{L/4}w^2\:dx,\\
\label{eq:odd_rearr_decreases_F}
\int_{-L/4}^{L/4}\tilde{w}^2\tilde{w}_x^2\:dx\leq \int_{-L/4}^{L/4}w^2w_x^2\:dx.
\end{align}
\end{lemma}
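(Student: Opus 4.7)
The natural tool is the odd rearrangement from Definition~\ref{def:odd_rearrangement}: for a function $f$ on $[-a,a]$ with $f(-a)=-f(a)$ and $\essinf f_x>0$, Lemma~\ref{lem:odd_rearr}\ref{itm:odd_rearr_iii}--\ref{itm:odd_rearr_iv} applied with the even weight $G(y)=y^2$ yields precisely the two desired properties \eqref{eq:odd_rearr_conserves_L2} and \eqref{eq:odd_rearr_decreases_F} for the resulting odd function $f^o$, which automatically satisfies $f^o(0)=0$ and inherits the bound $0\leq f^o_x\leq 1$. The obstruction to a direct application is that a general $w\in\cW$ does not satisfy the antisymmetry $w(-L/4)=-w(L/4)$ at the endpoints: the mean-zero condition combined with monotonicity only yields $w(-L/4)\leq 0\leq w(L/4)$.

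My plan is to reduce to the antisymmetric setting by a truncation. By a density argument (perturbing $w$ by a small strictly-increasing mean-zero function and passing to the limit via lower semicontinuity) we may assume $w$ is strictly increasing. The map $w(x)\mapsto -w(-x)$ preserves $\cW$ and leaves $\int w^2\:dx$ and $\int w^2 w_x^2\:dx$ invariant, so we may assume $|w(-L/4)|\leq |w(L/4)|$. Setting $M:=-w(-L/4)$ and $w_M(x):=\min\{w(x),M\}$, the truncated function is Lipschitz-$1$, nondecreasing, and satisfies $w_M(-L/4)=-M=-w_M(L/4)$, so Definition~\ref{def:odd_rearrangement} applies (after a further $\varepsilon$-regularization which is removed in the limit). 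Lemma~\ref{lem:odd_rearr} then produces $w_M^o\in\cW_0$ with
\[
\int (w_M^o)^2\:dx=\int w_M^2\:dx\leq\int w^2\:dx,\qquad \int (w_M^o)^2(w_M^o)_x^2\:dx\leq\int w_M^2(w_M)_x^2\:dx=\int_{\{w<M\}}w^2 w_x^2\:dx.
\]
Both quantities are bounded by their $w$-counterparts, with generic strict loss: an $L^2$-deficit of $\int_{\{w>M\}}(w^2-M^2)\:dx$ and a Dirichlet-deficit of $\int_{\{w>M\}}w^2 w_x^2\:dx$.

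To restore the $L^2$-deficit and produce the final $\tilde w\in\cW_0$, one perturbs $w_M^o$ by replacing a portion of it near $\pm L/4$ with a Lipschitz-$1$ sloped piece of some slope $\sigma\in[0,1]$ over a symmetric pair of intervals. The total $L^2$ norm depends continuously on $\sigma$, ranging from $\int w_M^2\:dx$ at $\sigma=0$ to a value no smaller than $\int w^2\:dx$ at $\sigma=1$ (which is possible because $\int w^2\leq L^3/96$ for all $w\in\cW$, while the $\sigma=1$ perturbation reaches this maximum), so the intermediate value theorem selects the correct $\sigma$. The main obstacle I foresee is verifying that this last modification does not destroy the inequality \eqref{eq:odd_rearr_decreases_F}: the extra contribution $\sigma^2\int_{\mathrm{ext}}\tilde w^2\:dx$ added to $\int\tilde w^2\tilde w_x^2\:dx$ must be absorbed by the Dirichlet-deficit $\int_{\{w>M\}}w^2 w_x^2\:dx$ discarded in the truncation step. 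A careful estimate here, using the Lipschitz-$1$ bound on $w$ to compare the length of the extension interval with that of the set $\{w>M\}$ and a Cauchy--Schwarz type comparison of the two integrals, should close the argument; checking that this comparison is uniform over the admissible shapes of $w$ is the genuinely technical point.
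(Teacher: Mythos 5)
Your plan is a genuinely different route from the paper's, and it is not complete: the crucial last step is exactly the point you leave open. After truncating at level $M=-w(-L/4)$ you have discarded both an $L^2$-mass $\int_{\{w>M\}}(w^2-M^2)\,dx$ and a Dirichlet mass $\int_{\{w>M\}}w^2w_x^2\,dx$, and your restoration step must put the first back \emph{exactly} (the lemma demands equality in \eqref{eq:odd_rearr_conserves_L2}) while charging the second no more than what was discarded. These two deficits are not comparable in any obvious way: the Dirichlet deficit involves $w_x^2$ on $\{w>M\}$ and can be made very small (take $w$ nearly flat just above level $M$ over a long stretch) while the $L^2$-deficit stays of a fixed size, so the admissible slope $\sigma$ and the length of the extension intervals have to be tuned against the geometry of $\{w>M\}$; moreover your intermediate-value argument at $\sigma=1$ only reaches the global maximum $L^3/96$ if you are allowed to overwrite essentially all of $w_M^o$, which then interacts again with \eqref{eq:odd_rearr_decreases_F}. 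You acknowledge this yourself ("the genuinely technical point"), but without that estimate the proof is not there, and it is not clear it can be made uniform over $\cW$.

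The paper sidesteps the whole difficulty with a different observation: since $w$ has zero average on $(-L/4,L/4)$ and $w(0)\neq 0$, the function $x\mapsto w(x)+w(-x)$ must change sign, so there is an \emph{interior} point $x_0\in(0,L/4]$ with $w(-x_0)=-w(x_0)$. One then adds $\varepsilon x$ to get a strictly positive slope, applies the odd rearrangement of Definition~\ref{def:odd_rearrangement} only on $[-x_0,x_0]$, and leaves $w_\varepsilon$ untouched outside; Lemma~\ref{lem:odd_rearr}~\ref{itm:odd_rearr_iii} with $G(y)=y^2$ gives exact conservation of the $L^2$-integral and \ref{itm:odd_rearr_iv} gives the Dirichlet inequality, with $\varepsilon\to0$ handled by uniform convergence plus weak lower semicontinuity of $\int\bigl(\tfrac12(\tilde w_\varepsilon^2)_x\bigr)^2dx$. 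There is no truncation, hence no deficit to restore; the only price is that $\tilde w$ may lose the zero-average property, which is harmless because $\cW_0$ does not require it (the paper notes this explicitly). I would encourage you to replace the truncate-and-restore scheme by this interior antisymmetry point; your endpoint-antisymmetry requirement is stronger than what the rearrangement actually needs.
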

We remark that $\tilde{w}$ might have lost the vanishing average property, i.e. potentially $\tilde{w}\notin \cW$. 

\begin{proof}[Proof of Lemma \ref{lem:application_of_odd_rearrangement}] We abbreviate again $l:=L/2$.
Let $w\in \cW\setminus \cW_0$, i.e. $w(0)\neq 0$. We claim that there exists a point $x_0\in (0,l/2]$ such that $w(-x_0)=-w(x_0)$. Otherwise $w(x)+w(-x)$ has a distinguished sign on $(0,l/2]$, say $w(x)+w(-x)>0$ for all $x\in(0,l/2]$. But then integration gives the contradiction
\[
0<\int_0^{l/2}w(x)+w(-x)\:dx=\int_{-l/2}^{l/2}w(x)\:dx=0.
\]
Now take $x_0\in(0,l/2]$ as above and define for $\varepsilon>0$ the function $w_\varepsilon(x):=w(x)+\varepsilon x$ which now has a uniformly positive slope and also satisfies $w_\varepsilon(-x_0)=-w_\varepsilon(x_0)$. Thus we can apply the odd rearrangement as stated in Definition \ref{def:odd_rearrangement} to the restriction $w_{\varepsilon|[-x_0,x_0]}$. We then set $\tilde{w}_\varepsilon:[-l/2,l/2]\rightarrow \R$,
\begin{align}\label{eq:definitino_wtilde_epsilon}
\tilde{w}_\varepsilon(x)=\begin{cases}
\left(w_{\varepsilon|[-x_0,x_0]}\right)^o(x),&x\in(-x_0,x_0),\\
w_\varepsilon(x),&\text{else}.
\end{cases}
\end{align}
Properties \ref{itm:odd_rearr_i},\ref{itm:odd_rearr_ii} of Lemma \ref{lem:odd_rearr} show that $\tilde{w}_\varepsilon\in \Lip([-l/2,l/2])$ with $0\leq \tilde{w}_{\varepsilon,x}\leq 1+\varepsilon$ almost everywhere. We also know $\tilde{w}_\varepsilon(0)=0$. Along a uniformly converging subsequence for $\varepsilon\rightarrow 0$ we therefore find a Lipschitz continuous limit $\tilde{w}\in \Lip([-l/2,l/2])$ with $0\leq \tilde{w}_x\leq 1$ and $\tilde{w}(0)=0$, i.e. $\tilde{w}\in\cW_0$.

Next, by construction $\tilde{w}$ coincides with $w$ outside of $(-x_0,x_0)$. Thus Lemma \ref{lem:odd_rearr} \ref{itm:odd_rearr_iii} implies
\begin{align*}
\int_{-l/2}^{l/2}\tilde{w}^2\:dx-\int_{-l/2}^{l/2}w^2\:dx&=\lim_{\varepsilon\rightarrow 0}\int_{-x_0}^{x_0}\big((w_{\varepsilon|[-x_0,x_0]})^o\big)^2-w^2\:dx\\
&=\lim_{\varepsilon\rightarrow 0}\int_{-x_0}^{x_0}w_\varepsilon^2-w^2\:dx=0.
\end{align*}
This shows property \eqref{eq:odd_rearr_conserves_L2}.

We recall that $\tilde{w}$ is the uniform limit of Lipschitz functions $\tilde{w}_\varepsilon$ with $0\leq \tilde{w}_{\varepsilon,x} \leq 1+\varepsilon$. In addition we can assume that $(\tilde{w}_\varepsilon^2)_x$ converges weakly in $L^2((-l/2,l/2))$ to $(\tilde{w}^2)_x$. By Lemma \ref{lem:odd_rearr} \ref{itm:odd_rearr_iv} with $G(y)=y^2$ we conclude
\begin{align*}
\int_{-l/2}^{l/2}\tilde{w}^2\tilde{w}_x^2\:dx&=\int_{-l/2}^{l/2}\left(\frac{1}{2}(\tilde{w}^2)_x\right)^2\:dx\leq \liminf_{\varepsilon\rightarrow 0}\int_{-l/2}^{l/2}\left(\frac{1}{2}(\tilde{w}_\varepsilon^2)_x\right)^2\:dx\\
&\leq \liminf_{\varepsilon\rightarrow 0}\int_{-l/2}^{l/2}\left(\frac{1}{2}(w_\varepsilon^2)_x\right)^2\:dx=\int_{-l/2}^{l/2}w^2w_x^2\:dx.
\end{align*}
This shows property \eqref{eq:odd_rearr_decreases_F} and finishes the proof of the lemma.
\end{proof}

For later use let us summarize how $\tilde{w}$ is formed.
\begin{lemma}\label{lem:application_of_odd_rearr2}
The function $\tilde{w}\in\cW_0$ from Lemma \ref{lem:application_of_odd_rearrangement} is the uniform limit of Lipschitz functions $\tilde{w}_\varepsilon$ defined through \eqref{eq:definitino_wtilde_epsilon} where $x_0$ is some point in $(0,L/4]$ and $w_\varepsilon(x)=w(x)+\varepsilon x$.
\end{lemma}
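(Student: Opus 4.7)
The plan is to observe that this statement is essentially a compact restatement of the construction already carried out inside the proof of Lemma \ref{lem:application_of_odd_rearrangement}; there is no genuinely new content beyond repackaging the explicit procedure. Hence my proof would consist of pointing to that construction and verifying that every ingredient mentioned in the present lemma (the point $x_0$, the perturbation $w_\varepsilon$, the definition \eqref{eq:definitino_wtilde_epsilon}, and the passage to the uniform limit) was produced there.

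More concretely, I would first recall why one can choose $x_0 \in (0,L/4]$ with $w(-x_0) = -w(x_0)$: if $w(x) + w(-x)$ had a definite sign on $(0,L/4]$, then integrating from $0$ to $L/4$ would yield $\int_{-L/4}^{L/4} w\,dx \neq 0$, contradicting the zero-mean condition built into $\cW$. Thus such an $x_0$ exists and $w_\varepsilon(x) = w(x) + \varepsilon x$ inherits the antisymmetry $w_\varepsilon(-x_0) = -w_\varepsilon(x_0)$ while acquiring the lower slope bound $\essinf (w_\varepsilon)_x \geq \varepsilon > 0$, so it satisfies the hypotheses of Definition \ref{def:odd_rearrangement} on $[-x_0,x_0]$.

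Next I would note that \eqref{eq:definitino_wtilde_epsilon} then unambiguously defines $\tilde{w}_\varepsilon$, and that Lemma \ref{lem:odd_rearr}\ref{itm:odd_rearr_ii} provides the uniform Lipschitz bound $\|(\tilde{w}_\varepsilon)_x\|_{L^\infty} \leq 1+\varepsilon$. Together with the uniform pointwise bound coming from $|w_\varepsilon| \leq |w| + \varepsilon L/4$, the family $\{\tilde{w}_\varepsilon\}_{\varepsilon \in (0,1]}$ is equi-Lipschitz and uniformly bounded, so Arzelà--Ascoli furnishes a subsequence $\varepsilon_k \to 0$ along which $\tilde{w}_{\varepsilon_k} \to \tilde{w}$ uniformly on $[-L/4,L/4]$. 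Inspecting the proof of Lemma \ref{lem:application_of_odd_rearrangement} one sees that this is exactly the $\tilde{w} \in \cW_0$ constructed there.

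The main (and essentially only) subtlety is a bookkeeping one: making sure that no additional ambiguity is hidden in the choice of $x_0$ or in the extraction of the subsequence. Since the statement of Lemma \ref{lem:application_of_odd_rearr2} explicitly allows any such $x_0 \in (0,L/4]$ and refers to \emph{a} uniform limit of the associated $\tilde{w}_\varepsilon$, rather than claiming uniqueness, there is nothing further to verify and the proof is complete.
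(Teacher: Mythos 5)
Your proposal is correct and matches the paper's treatment: Lemma \ref{lem:application_of_odd_rearr2} is stated there without a separate proof precisely because $\tilde{w}$ is \emph{defined} in the proof of Lemma \ref{lem:application_of_odd_rearrangement} as a uniform (subsequential, via the equi-Lipschitz bound from Lemma \ref{lem:odd_rearr}\ref{itm:odd_rearr_ii} and Arzel\`a--Ascoli) limit of the functions $\tilde{w}_\varepsilon$ from \eqref{eq:definitino_wtilde_epsilon}, with $x_0\in(0,L/4]$ obtained exactly by the zero-mean/sign argument you reproduce. So you are simply re-verifying the same construction, which is all that is required.
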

\subsubsection{Problem 1}

Having introduced the two rearrangements we first deal with the well-definedness of the sets $X_h$. Let $w_0\in \Lip(\T)$ be defined as the $L$-periodic extension of 
\[
w_0(x)=L/4-\abs{x},\quad x\in[-L/2,L/2]
\]
and recall the definition of $X$ in \eqref{eq:definition_of_X_without_h} if needed.
\begin{lemma}\label{lem:estimate_L2_norm}
There holds 
\begin{align*}
\sup_{w\in X}\int_\T w^2\:dx=\frac{L^3}{48}.
\end{align*}
The supremum is attained precisely for translates of $w_0$.
\end{lemma}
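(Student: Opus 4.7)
The plan is to reduce to a one-parameter variational problem on $[0,L/2]$ via the symmetric-decreasing rearrangement of Lemma \ref{lem:existence_of_symmetric_decreasing_competitor}, and then estimate by an elementary Lipschitz argument on a double integral.

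First I would apply Lemma \ref{lem:existence_of_symmetric_decreasing_competitor}: given $w\in X$, the competitor $\tilde w\in X$ has the same $L^2$-norm and, restricted to $(-L/2,L/2)$, is even and non-increasing on $[0,L/2)$. Hence it suffices to prove the bound assuming that $w$ itself has these symmetries, in which case $\int_\T w^2\,dx = 2\int_0^{L/2} w^2\,dx$.

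Under these symmetries, set $\Phi(x) := w(0) - w(x)$ on $[0,L/2]$. Then $\Phi(0)=0$, and $\Phi$ is non-decreasing and $1$-Lipschitz. The mean-zero condition forces $w(0) = \bar\Phi := \frac{2}{L}\int_0^{L/2}\Phi\,dx$, and the substitution $w = \bar\Phi - \Phi$ together with the algebraic identity
\[
L\int_0^{L/2}\Phi^2\,dx - 2\Big(\int_0^{L/2}\Phi\,dx\Big)^2 = \int_0^{L/2}\int_0^{L/2}(\Phi(x_1)-\Phi(x_2))^2\,dx_1\,dx_2
\]
yield
\[
\int_0^{L/2} w^2\,dx = \frac{1}{L}\int_0^{L/2}\int_0^{L/2}(\Phi(x_1)-\Phi(x_2))^2\,dx_1\,dx_2.
\]
The pointwise Lipschitz bound $|\Phi(x_1)-\Phi(x_2)|\leq |x_1-x_2|$ then gives
\[
\int_0^{L/2} w^2\,dx \leq \frac{1}{L}\int_0^{L/2}\int_0^{L/2}(x_1-x_2)^2\,dx_1\,dx_2 = \frac{L^3}{96},
\]
and hence $\int_\T w^2\,dx \leq L^3/48$.

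For the identification of extremizers, equality in the Lipschitz estimate forces $|\Phi(x_1)-\Phi(x_2)| = |x_1-x_2|$ for a.e.\ $(x_1,x_2)$, which combined with $\Phi(0)=0$ and $\Phi$ non-decreasing forces $\Phi(x)=x$. Thus $\tilde w(x) = L/4 - x$ on $[0,L/2]$, i.e.\ $\tilde w$ is a translate of $w_0$. The remaining step is to trace the rearrangement back and conclude that $w$ itself must be a translate of $w_0$; this uses the strict monotonicity of the positive and negative parts of $w_0$ together with the rigidity of the symmetric-decreasing rearrangement. This final rigidity step is the main delicate point, as the rest of the argument is an immediate computation once Lemma \ref{lem:existence_of_symmetric_decreasing_competitor} has been applied.
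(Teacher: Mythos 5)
Your argument is correct, but it takes a genuinely different route from the paper after the first reduction. The paper, having symmetrized via Lemma \ref{lem:existence_of_symmetric_decreasing_competitor}, performs a \emph{second} rearrangement (the odd rearrangement of Lemma \ref{lem:application_of_odd_rearrangement}) to pass to the class $\cW_0$ and then identifies the maximizer by a strict convexity comparison with the identity; you instead bypass the odd rearrangement entirely with the substitution $\Phi=w(0)-w$, the elementary identity $L\int_0^{L/2}\Phi^2\,dx-2\bigl(\int_0^{L/2}\Phi\,dx\bigr)^2=\int_0^{L/2}\int_0^{L/2}(\Phi(x_1)-\Phi(x_2))^2\,dx_1\,dx_2$, and the pointwise bound $\abs{\Phi(x_1)-\Phi(x_2)}\leq\abs{x_1-x_2}$. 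This is more elementary and also makes the equality analysis for the symmetrized competitor immediate ($\Phi(x)=x$, hence $\tilde w=w_0$ up to translation); the paper's heavier route pays off elsewhere, since the odd rearrangement is needed again for the second variational problem (Lemmas \ref{lem:reduced_var_problem}, \ref{lem:reduction_of_actual_variational_problem}), but for the present lemma your computation is a genuine shortcut. Concerning the step you flag as the main delicate point: it is in fact easy, and no rigidity of the symmetric decreasing rearrangement is needed. Lemma \ref{lem:existence_of_symmetric_decreasing_competitor} records in \eqref{eq:conservation_of_inf_sup_symm_rearr} that the competitor preserves $\inf$ and $\sup$, so once $\tilde w$ is known to be a translate of $w_0$ you get $\sup w=L/4$ and $\inf w=-L/4$; a $1$-Lipschitz function on the circle of circumference $L$ attaining values $\pm L/4$ must have its maximum and minimum at antipodal points and be affine with slope $\pm1$ on the two connecting arcs (each arc has length at least $L/2$ because the oscillation $L/2$ is bounded by the arc length, forcing both lengths to equal $L/2$ with equality throughout), hence $w$ is a translate of $w_0$. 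This is exactly how the paper concludes its uniqueness step, so your proof closes with no additional machinery.
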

This in particular implies the following bound on the $H^{-1}$-norm:
\begin{corollary}\label{cor:estimate_Hminus1_norm} For any $\rho\in L^\infty(\T)$ with $\norm{\rho}_{L^\infty(\T)}\leq 1$ and $\int_\T\rho\:dx=0$ there holds $\norm{\rho}^2_{H^{-1}(\T)}\leq L^3/48$.
\end{corollary}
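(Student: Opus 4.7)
The plan is to deduce this bound directly from Lemma \ref{lem:estimate_L2_norm} by identifying $\rho$ with the second derivative of an element of $X$. Concretely, since $\int_\T \rho\,dx = 0$, there exists a (mean-zero) periodic potential $\varphi \in W^{2,\infty}(\T)$ solving $\varphi_{xx} = \rho$ in the sense of distributions. Setting $w := \varphi_x$, the function $w$ is Lipschitz continuous on $\T$ with weak derivative $w_x = \rho$, so $\norm{w_x}_{L^\infty(\T)} = \norm{\rho}_{L^\infty(\T)} \leq 1$. Moreover, by periodicity of $\varphi$ we have $\int_\T w\,dx = \int_\T \varphi_x\,dx = 0$. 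Hence $w \in X$.

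Next I would observe that the $H^{-1}$-norm and the $L^2$-norm of $w$ coincide in this setup: by the definition used throughout the paper,
\[
\norm{\rho}_{H^{-1}(\T)}^2 = \int_\T \varphi_x^2\,dx = \int_\T w^2\,dx.
\]
Applying Lemma \ref{lem:estimate_L2_norm} gives
\[
\norm{\rho}_{H^{-1}(\T)}^2 = \int_\T w^2\,dx \leq \sup_{\tilde w \in X}\int_\T \tilde w^2\,dx = \frac{L^3}{48},
\]
which is the desired inequality.

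There is no real obstacle here: the corollary is essentially a one-line translation of the preceding variational result, with the bound $\norm{\rho}_{L^\infty}\leq 1$ ensuring admissibility of $w=\varphi_x$ in $X$, and the zero-mean assumption on $\rho$ ensuring that the potential $\varphi$ exists as a periodic function. The equality case (i.e.\ when the bound is saturated) is inherited from Lemma \ref{lem:estimate_L2_norm}, which identifies the extremizers as translates of $w_0(x) = L/4 - |x|$; correspondingly, the extremizing $\rho = w_x$ is (up to translation) the sign function $\hat\rho_0$ from \eqref{eq:definition_hat_rho}, consistent with the remark immediately preceding \eqref{eq:definition_hat_rho}.
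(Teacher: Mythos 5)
Your proposal is correct and follows the same route as the paper: the corollary is stated there as an immediate consequence of Lemma \ref{lem:estimate_L2_norm}, obtained exactly by taking the periodic potential $\varphi$ with $\varphi_{xx}=\rho$, noting that $w=\varphi_x$ lies in $X$ and that $\norm{\rho}_{H^{-1}(\T)}^2=\int_\T w^2\,dx$. Your additional remark on the equality case (translates of $w_0$, hence $\rho=\hat\rho_0$ up to translation) is also consistent with the paper's discussion preceding \eqref{eq:definition_hat_rho}.
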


\begin{proof}[Proof of Lemma \ref{lem:estimate_L2_norm}] Let us abbreviate the stated supremum by $S$ and define $l:=L/2$.

\emph{First reduction:} By means of Lemma \ref{lem:existence_of_symmetric_decreasing_competitor}, in particular \eqref{eq:symm_rearr_conserves_L2}, we can without loss of generality restrict ourselves to functions $w\in X$ that are even when restricted to $(-l,l)$ and non-increasing on $[0,l)$.
After a shift by $l/2$ we therefore can characterize $S$ by
\begin{align}\label{eq:supremum1}
S=2\sup\set{\int_{-l/2}^{l/2}w^2\:dx:w\in \cW},
\end{align}
where we recall that the set $\cW$ defined in \eqref{eq:definition_W} consists of all Lipschitz functions $w:[-l/2,l/2]\rightarrow\R$ with $0\leq w_x\leq 1$ almost everywhere and $\int_{-l/2}^{l/2}w\:dx=0$.

\emph{Second reduction:} Next we apply Lemma \ref{lem:application_of_odd_rearrangement}, in particular \eqref{eq:odd_rearr_conserves_L2}, to deduce that
\begin{equation}\label{eq:supremum2}
\sup\set{\int_{-l/2}^{l/2}w^2\:dx:w\in \cW}\leq \sup\set{\int_{-l/2}^{l/2}w^2\:dx:w\in \cW_0},
\end{equation}
where $\cW_0$, defined in \eqref{eq:definition_W0}, is the set of Lipschitz functions $w\in\Lip([-l/2,l/2])$ with $0\leq w_x\leq 1$ almost everywhere and $w(0)=0$. Note that there holds only an inequality, since the set $\cW_0$ is not contained in $\cW$.

\emph{The reduced problem:}
We claim that the supremum over $\cW_0$ is achieved precisely for $w(x)=x$. For that let $w\in\cW_0$ and observe that then $0\leq w(x)\leq x$ for all $x\in [0,l/2]$ and $x\leq w(x)\leq 0$ for all $x\in [-l/2,0]$. Therefore, if $w$ is different from the identity strict convexity indeed implies
\begin{align*}
\int_{-l/2}^{l/2}x^2\:dx-\int_{-l/2}^{l/2}w^2\:dx>2\int_{-l/2}^{l/2}w(x)(x-w(x))\:dx\geq 0.
\end{align*}

\emph{Conclusion and uniqueness:} Observe that the identity $x\mapsto x$ belongs to both sets $\cW$ and $\cW_0$. Since the supremum over $\cW_0$ is attained if $w$ is the identity, we conclude equality in \eqref{eq:supremum2}, and in combination with \eqref{eq:supremum1} we deduce
\begin{align*}
S=2\int_{-l/2}^{l/2}x^2\:dx=\frac{l^3}{6}=\frac{L^3}{48}.
\end{align*}

Note that considering the identity in $\cW$ corresponds to considering a translate of $w_0$, defined in the beginning of this subsection, in the original variational problem. Thus the supremum $S$ is attained in $w_0$ and its translates.

Let us also show that the supremum $S$ is only achieved for such translates.
Indeed, Lemma \ref{lem:existence_of_symmetric_decreasing_competitor} applied to $u\in\Lip(\T)$ with $\int_\T u\:dx=0$, $\norm{u_x}_{L^\infty(\T)}\leq 1$, $\int_{\T}u^2\:dx=S=L^3/48$ gives an even competitor $\tilde{u}$ that is increasing on $(-l,0)$ and decreasing on $(0,l)$. Let us denote the shifted version which is increasing on $(-l/2,l/2)$ by $v$, i.e. $v_{|(-l/2,l/2)}\in\cW$ with 
\[
\int_{-l/2}^{l/2}v^2\:dx=\frac{S}{2}=\sup_{w\in\cW_0}\int_{-l/2}^{l/2}w^2\:dx.
\]
If $v(0)=0$, i.e. $v_{|(-l/2,l/2)}\in \cW_0$ we conclude $v(x)=x$, $x\in(-l/2,l/2)$ by the uniqueness of the reduced problem. If $v(0)\neq 0$, by Lemma \ref{lem:application_of_odd_rearr2} we find a sequence of functions $\tilde{v}_\varepsilon$ which uniformly converges to the identity by means of the same uniqueness result. In addition we can also assume that $\tilde{v}_{\varepsilon,x}$ converges weakly in $L^2(-l/2,l/2)$ to $1$. Now recall from Lemma \ref{lem:application_of_odd_rearr2} that outside a certain interval $(-x_0,x_0)$, which is independent of $\varepsilon>0$, there holds $\tilde{v}_\varepsilon(x)=v(x)+\varepsilon x$, and thus $v(x)=x$ for $x\notin(-x_0,x_0)$. Moreover, inside $(-x_0,x_0)$ where the odd rearrangement is applied we use Lemma \ref{lem:odd_rearr} \ref{itm:odd_rearr_iv} with $G(y)=1$ and the weak convergence of $\tilde{v}_{\varepsilon,x}$ to estimate
\begin{align*}
\int_{-x_0}^{x_0}1\:dx\leq \liminf_{\varepsilon\rightarrow 0}\int_{-x_0}^{x_0}(\tilde{v}_{\varepsilon,x})^2\:dx\leq \liminf_{\varepsilon\rightarrow 0}\int_{-x_0}^{x_0}(v_x+\varepsilon)^2\:dx=\int_{-x_0}^{x_0}(v_x)^2\:dx.
\end{align*}
On the otherhand we know $0\leq v_x\leq 1$ almost everywhere. Thus $v_x=1$ almost everywhere in $(-x_0,x_0)$. But then, since we assumed $v(0)\neq 0$, we get a contradiction to $\int_{-l/2}^{l/2}v\:dx=0$. Consequently $v(x)=x$, $x\in(-l/2,l/2)$.

It remains to show that this implies $u$ to coinced with $w_0$. The definition of $v$, the fact that $v(x)=x$ for $x\in(-l/2,l/2)$ and \eqref{eq:conservation_of_inf_sup_symm_rearr} of Lemma \ref{lem:existence_of_symmetric_decreasing_competitor} imply that the infimum of $u$  coincides with $\inf w_0=w_0(\pm l)=L/4-l=-L/4$, and that $\sup u=\sup w_0=L/4$. After a translation we can assume that also $u$ attains its infimum in $\pm l$. However, under the constraint $\norm{u_x}_{L^\infty(\T)}\leq 1$ this is only possible for $u=w_0$.
\end{proof}

\subsubsection{Problem 2}
In view of Lemma \ref{lem:estimate_L2_norm} we define 
\begin{align}\label{eq:defintino_of_hmax}
h_{max}:=\frac{L^\frac{3}{2}}{\sqrt{48}}
\end{align}
and fix $h\in [0,h_{max}]$. Recall that we are interested in the supremum of $F$ over the set $X_h$. The definitions of $F$ and $X_h$, cf. \eqref{eq:definition_of_X}, \eqref{eq:definition_of_F}, imply that 
\begin{align}\label{eq:relation_supF_and_infF1}
\sup_{w\in X_h}F(w)=h^2-\inf_{w\in X_h} F_1(w),
\end{align} 
where $F_1:H^1(\T)\rightarrow \R$ is defined as 
\begin{align*}
F_1(w):=\int_{\T}w_x^2w^2\:dx.
\end{align*}
In order to study $\inf_{X_h}F_1$ we proceed analogously to Lemma \ref{lem:estimate_L2_norm}. However, here we first of all deal with the corresponding reduced variational problem.

Again we abbreviate $l:=L/2$ and define the set $\cW_0^h$ by saying that $w\in\cW_0^h$ if and only if
\begin{gather*}
w\in \Lip([-l/2,l/2]),\quad 0\leq w_x\leq 1,\quad w(0)=0,\quad \int_{-l/2}^{l/2}w^2\:dx=\frac{h^2}{2}.
\end{gather*}
Moreover, for $\alpha\in[0,1]$ we define an odd function $w^\alpha:[-l/2,l/2]\rightarrow\R$ by setting 
\begin{align*}
w^\alpha(x):=\begin{cases}
x,&0\leq x\leq \alpha^2\frac{l}{2},\\
\alpha\frac{l}{2}\left(1-\frac{\left(1-\frac{x}{l/2}\right)^2}{1-\alpha^2}\right)^{\frac{1}{2}},&\alpha^2\frac{l}{2}<x\leq \frac{l}{2}.
\end{cases}
\end{align*}
We also set $F_2:H^1(-l/2,l/2)\rightarrow\R$, 
\[
F_2(w)=\int_{-l/2}^{l/2}w_x^2w^2\:dx.
\]
\begin{lemma}\label{lem:reduced_var_problem} The function $w^\alpha$ belongs to $\cW_0^h$ if and only if $\alpha=\alpha(h)$ is the unique number in $[0,1]$ which satisfies
\begin{equation}\label{eq:condition_on_alpha}
\frac{h^2}{h_{max}^2}=\alpha^2(2-\alpha^2).
\end{equation}
Moreover, for the infimum of $F_2$ over $\cW_0^h$ there holds
\begin{align*}
\inf_{w\in\cW_0^h}F_2(w)=F_2(w^{\alpha(h)})=\frac{h_{max}^2}{2}\left(1-\sqrt{1-\frac{h^2}{h_{max}^2}}\right)^2
\end{align*}
and $w^{\alpha(h)}$ is the only element of $\cW_0^h$ in which the infimum is attained.
\end{lemma}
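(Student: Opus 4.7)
The plan is to verify that $w^{\alpha(h)}\in\cW_0^h$ and to evaluate $F_2(w^{\alpha(h)})$ by direct computation, then to prove optimality by combining a single algebraic identity with one integration by parts. The guiding intuition is that $w^\alpha$ is precisely the Euler--Lagrange minimizer of $F_2$ under the $L^2$-constraint and the pointwise Lipschitz bound: on the central region $\{|x|\le\alpha^2 l/2\}$ (with $l:=L/2$) the constraint $w_x=1$ is active, while on the outer region the interior equation $(ww_x)_x=-\lambda$ with natural condition $w_x(\pm l/2)=0$ holds, where $\lambda:=\alpha^2/(1-\alpha^2)$. Splitting the integral at $\pm\alpha^2 l/2$ and performing elementary integration (with $(w^\alpha)^2=x^2$ inside and the explicit formula outside) gives
\begin{equation*}
\int_{-l/2}^{l/2}(w^\alpha)^2\,dx=\frac{l^3}{6}\alpha^2(2-\alpha^2)=\frac{h_{max}^2}{2}\alpha^2(2-\alpha^2).
\end{equation*}
Since $\alpha\mapsto\alpha^2(2-\alpha^2)=1-(1-\alpha^2)^2$ is a strictly increasing bijection of $[0,1]$ onto itself, the condition $\int(w^\alpha)^2=h^2/2$ determines $\alpha=\alpha(h)$ uniquely and is equivalent to \eqref{eq:condition_on_alpha}. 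A similar computation exploiting $w^\alpha w^\alpha_x=\lambda(l/2-|x|)$ on the outer region yields $F_2(w^\alpha)=\tfrac{h_{max}^2}{2}\alpha^4$, which agrees with the value claimed in the lemma because $1-\alpha^2=\sqrt{1-h^2/h_{max}^2}$.

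For the optimality bound, fix any competitor $w\in\cW_0^h$ and set $\Delta:=w^2-(w^\alpha)^2$. Writing $ww_x=\tfrac12(w^2)_x$ and factoring a difference of squares gives the pointwise identity
\begin{equation*}
w_x^2w^2-(w^\alpha_x)^2(w^\alpha)^2=\tfrac14\Delta_x^2+\tfrac12\Delta_x\,((w^\alpha)^2)_x,
\end{equation*}
whose first term contributes the nonnegative Dirichlet integral $\tfrac14\int\Delta_x^2\,dx$. For the cross term, integrate by parts on the three subintervals $(-l/2,-\alpha^2 l/2)$, $(-\alpha^2 l/2,\alpha^2 l/2)$, $(\alpha^2 l/2,l/2)$; the contributions at the interior junction points $\pm\alpha^2 l/2$ cancel because $((w^\alpha)^2)_x$ is continuous there, and the boundary terms at $\pm l/2$ vanish because $((w^\alpha)^2)_x(\pm l/2)=0$. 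Using $((w^\alpha)^2)_{xx}=2$ on the central interval and $((w^\alpha)^2)_{xx}=-2\lambda$ on its complement, together with $\int_{-l/2}^{l/2}\Delta\,dx=0$ (which comes from the common $L^2$-norm), the cross term collapses to $-(1+\lambda)\int_{-\alpha^2 l/2}^{\alpha^2 l/2}\Delta\,dx$, and hence
\begin{equation*}
F_2(w)-F_2(w^\alpha)=\tfrac14\int_{-l/2}^{l/2}\Delta_x^2\,dx-(1+\lambda)\int_{-\alpha^2 l/2}^{\alpha^2 l/2}(w^2-x^2)\,dx.
\end{equation*}
The decisive observation is $|w(x)|\le|x|$ on the central interval, forced by $w(0)=0$ and $0\le w_x\le 1$; consequently $w^2-x^2\le 0$, and since $1+\lambda=1/(1-\alpha^2)>0$ both terms on the right are nonnegative.

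Uniqueness of the minimizer follows by tracing the equality case: vanishing of the Dirichlet term gives $\Delta_x\equiv 0$, and $\Delta(0)=0$ then yields $w^2\equiv(w^\alpha)^2$; combined with the monotonicity $w_x\ge 0$ and $w(0)=0$ this fixes the sign of $w$ on each of $[-l/2,0]$ and $[0,l/2]$, forcing $w=w^{\alpha(h)}$. The main obstacle is guessing the precise form of $w^\alpha$ on the outer region and the matching constant $\lambda$; both are dictated by the free-boundary Euler--Lagrange problem above, and once this structure is identified the algebraic identity together with the elementary bound $|w|\le|x|$ on the active-constraint interval closes the argument.
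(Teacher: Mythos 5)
Your proof is correct and is essentially the paper's own argument in expanded form: writing $F_2(w)-F_2(w^{\alpha})=\tfrac14\int\Delta_x^2\,dx+\tfrac12\int\Delta_x\,((w^\alpha)^2)_x\,dx$ and integrating the cross term by parts against the piecewise constant $((w^\alpha)^2)_{xx}$ is exactly the paper's strict-convexity comparison of $u=\tfrac12w^2$ with $\bar u=\tfrac12(w^{\alpha})^2$, followed by the same use of $\int\Delta\,dx=0$ and of $\abs{w(x)}\leq\abs{x}$ on the central interval; your explicit quadratic remainder $\tfrac14\int\Delta_x^2$ just replaces the paper's strict convexity in the equality analysis. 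Two minor points: in your $L^2$ computation the intermediate constant should be $l^3/12$ rather than $l^3/6$ (your final expression $\tfrac{h_{max}^2}{2}\alpha^2(2-\alpha^2)$ is the correct one), and your optimality step tacitly assumes $\alpha(h)<1$, since both $\lambda=\alpha^2/(1-\alpha^2)$ and the vanishing of $((w^\alpha)^2)_x$ at $\pm l/2$ fail for $\alpha=1$; as in the paper, the case $h=h_{max}$ should be disposed of separately by the uniqueness part of Lemma \ref{lem:estimate_L2_norm}, which shows that $\cW_0^{h_{max}}$ contains only the identity, so nothing remains to prove there.
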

\begin{proof}
Note that $w^\alpha$ is even continuously differentiable with 
\begin{align}\label{eq:derivative_of_walpha}
w^\alpha_x(x)=\alpha\left(1-\frac{\left(1-\frac{x}{l/2}\right)^2}{1-\alpha^2}\right)^{-\frac{1}{2}}\frac{1-\frac{x}{l/2}}{1-\alpha^2}
\end{align}
for $\alpha^2l/2< x\leq l/2$. Clearly $w^\alpha(0)=0$, and \eqref{eq:derivative_of_walpha} implies $0\leq w_x^\alpha\leq 1$.   

Concerning the $L^2$-integral we compute
\begin{align*}
2\int_{-l/2}^{l/2}(w^\alpha)^2\:dx=\frac{l^3}{6}\alpha^2(2-\alpha^2).
\end{align*}
Now $w^\alpha\in\cW_0^h$ if and only if the latter quantity coincides with $h^2$. Recalling \eqref{eq:defintino_of_hmax} this is indeed equivalent to \eqref{eq:condition_on_alpha}.

Next we consider $w\in\cW_0^h$ different from $w^{\alpha(h)}$. In view of the uniqueness in Lemma \ref{lem:estimate_L2_norm} this implies $h<h_{max}$ and thus $\alpha<1$. We define $u:=\frac{1}{2}w^2$ and $\bar{u}:=\frac{1}{2}(w^{\alpha(h)})^2$. Note that this implies that also $u_x$ and $\bar{u}_x$ are different from another. By strict convexity and since $\bar{u}_x(l/2)=0$, $\bar{u}_x(-l/2)=0$ we obtain
\begin{align*}
F_2(w)-F_2(w^{\alpha(h)})&=\int_{-l/2}^{l/2}u_x^2\:dx-\int_{-l/2}^{l/2}\bar{u}_x^2\:dx>2\int_{-l/2}^{l/2}\bar{u}_x(u_x-\bar{u}_x)\:dx\\
&=-2\int_{-l/2}^{l/2}\bar{u}_{xx}(u-\bar{u})\:dx.
\end{align*}
Now $\bar{u}_{xx}(x)=1$ for $x\in (-\alpha^2l/2,\alpha^2l/2)$, $\bar{u}_{xx}(x)=-\frac{\alpha^2}{1-\alpha^2}$ for $\abs{x}\in (\alpha^2 l/2,l/2)$ and $\int_{-l/2}^{l/2}(u-\bar{u})\:dx=0$, which holds due to the fact that $w,w^{\alpha(h)}\in\cW_0^h$, imply
\begin{align*}
F_2(w)-F_2(w^{\alpha(h)})&>-2\int_{-l/2}^{l/2}\bar{u}_{xx}(u-\bar{u})\:dx\\
&=-2\left(1+\frac{\alpha^2}{1-\alpha^2}\right)\int_{-\alpha^2l/2}^{\alpha^2l/2}(u-\bar{u})\:dx\\
&=\frac{1}{1-\alpha^2}\int_{-\alpha^2l/2}^{\alpha^2l/2}x^2-w^2\:dx.
\end{align*}
Since $w$ is $1$-Lipschitz with $w(0)=0$ we conclude $F_2(w)-F_2(w^{\alpha(h)})>0$.

Finally one computes
\begin{align*}
F_2(w^{\alpha(h)})=\frac{l^3}{12}\alpha(h)^4,
\end{align*}
which in view of \eqref{eq:defintino_of_hmax}, \eqref{eq:condition_on_alpha}  becomes
\[
\frac{l^3}{12}\alpha(h)^4=\frac{1}{2}h_{max}^2\left(1-\sqrt{1-\frac{h^2}{h_{max}^2}}\right)^2.
\]
This finishes the proof of the lemma.
\end{proof}
It remains to reduce the actual variational problem to the one just solved. 
For that we define $\bar{w}^h\in \Lip(\T)$ to be the $L$-periodic extension of the even extension $[-l,l]\rightarrow\R$ of $[-l,0]\rightarrow \R$, $x\mapsto w^{\alpha(h)}(x+l/2)$.
\begin{lemma}\label{lem:reduction_of_actual_variational_problem}
There holds
\begin{align*}
\inf_{w\in X_h}F_1(w)=2\inf_{w\in\cW_0^h}F_2(w)
\end{align*}
and the infimum on the left-hand side is achieved in translations of $\bar{w}^h$. 
\end{lemma}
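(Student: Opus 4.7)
The proof runs in parallel to that of Lemma~\ref{lem:estimate_L2_norm}: both the symmetric decreasing rearrangement of Lemma~\ref{lem:existence_of_symmetric_decreasing_competitor} and the odd rearrangement of Lemma~\ref{lem:application_of_odd_rearrangement} preserve $\int w^{2}\,dx$ and do not increase $\int w^{2}w_{x}^{2}\,dx$. Thus the same two-step reduction applies, the only novelty being that we now keep the $L^{2}$-constraint in force while minimizing $F_{1}$. Because both rearrangements are compatible with that constraint, the problem reduces to the one already solved in Lemma~\ref{lem:reduced_var_problem}.

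\textbf{Step 1 (symmetric decreasing rearrangement).} Given $w\in X_{h}$, Lemma~\ref{lem:existence_of_symmetric_decreasing_competitor} provides a $\tilde w\in X_{h}$ (the $L^{2}$-constraint is preserved by \eqref{eq:symm_rearr_conserves_L2}) that is, after a translation, even on $(-l,l)$ with $l:=L/2$ and non-increasing on $[0,l]$, and satisfies $F_{1}(\tilde w)\leq F_{1}(w)$ by \eqref{eq:decreasing_through_symmetric_rearrangement}. The reflected restriction $u(x):=\tilde w(l/2-x)$ for $x\in[-l/2,l/2]$ is Lipschitz with $0\leq u_{x}\leq 1$ almost everywhere, has $\int_{-l/2}^{l/2}u\,dx=0$ (from evenness of $\tilde w$ together with $\int_{\T}\tilde w\,dx=0$) and $\int_{-l/2}^{l/2}u^{2}\,dx=h^{2}/2$. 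A change of variables gives $F_{1}(\tilde w)=2F_{2}(u)$, and $u$ belongs to the set $\cW^{h}:=\cW\cap\{v:\int_{-l/2}^{l/2}v^{2}\,dx=h^{2}/2\}$.

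\textbf{Step 2 (odd rearrangement).} If $u(0)=0$, then already $u\in\cW_{0}^{h}$; otherwise Lemma~\ref{lem:application_of_odd_rearrangement} produces $\tilde u\in\cW_{0}$ with $\int_{-l/2}^{l/2}\tilde u^{2}\,dx=h^{2}/2$ by \eqref{eq:odd_rearr_conserves_L2}, hence $\tilde u\in\cW_{0}^{h}$, and $F_{2}(\tilde u)\leq F_{2}(u)$ by \eqref{eq:odd_rearr_decreases_F}. In either case one extracts $v\in\cW_{0}^{h}$ with $F_{2}(v)\leq F_{2}(u)$, so that
\[
F_{1}(w)\;\geq\;2F_{2}(v)\;\geq\;2\inf_{\cW_{0}^{h}}F_{2},
\]
and therefore $\inf_{X_{h}}F_{1}\geq 2\inf_{\cW_{0}^{h}}F_{2}$.

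\textbf{Reverse inequality and attainment.} The minimizer $w^{\alpha(h)}$ provided by Lemma~\ref{lem:reduced_var_problem} is odd on $[-l/2,l/2]$. A direct check shows that $\bar w^{h}$ belongs to $\Lip(\T)$ with $\norm{\bar w^{h}_{x}}_{L^{\infty}(\T)}\leq 1$, has vanishing mean, and $\int_{\T}(\bar w^{h})^{2}\,dx=2\int_{-l/2}^{l/2}(w^{\alpha(h)})^{2}\,dx=h^{2}$, so $\bar w^{h}\in X_{h}$. Running the change of variables from Step~1 in reverse yields $F_{1}(\bar w^{h})=2F_{2}(w^{\alpha(h)})=2\inf_{\cW_{0}^{h}}F_{2}$, giving the matching upper bound $\inf_{X_{h}}F_{1}\leq 2\inf_{\cW_{0}^{h}}F_{2}$ and attainment at $\bar w^{h}$ (and, by translation invariance of $F_{1}$ and $X_{h}$, at every translate thereof).

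The delicate part, and the main obstacle, is to identify translates of $\bar w^{h}$ as the \emph{only} minimizers. For this one retraces the two rearrangements on an arbitrary minimizer, invokes the uniqueness in Lemma~\ref{lem:reduced_var_problem} to force the resulting element of $\cW_{0}^{h}$ to equal $w^{\alpha(h)}$, and then appeals to the equality case of Lemma~\ref{lem:odd_rearr}\ref{itm:odd_rearr_iv} with $G(y)=y^{2}$ to show the odd rearrangement could not have been nontrivial. The fact that $G(y)=y^{2}$ vanishes at $y=0$ needs to be handled carefully, but the $\varepsilon$-perturbation and support tracking described in Lemma~\ref{lem:application_of_odd_rearr2}, exactly as used in the uniqueness analysis at the end of the proof of Lemma~\ref{lem:estimate_L2_norm}, suffice to conclude.
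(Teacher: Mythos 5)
Your two-step reduction (symmetric decreasing rearrangement to pass to $\cW^h$, then the odd rearrangement to pass to $\cW_0^h$, plus the explicit check that $\bar w^h\in X_h$ with $F_1(\bar w^h)=2F_2(w^{\alpha(h)})$ for the reverse inequality and attainment) is correct and is essentially the paper's own proof. One caveat: your final paragraph addresses a claim the lemma does not make. ``The infimum is achieved in translations of $\bar w^h$'' asserts attainment only, not that these are the \emph{only} minimizers; the paper's remark immediately after the lemma explicitly states that tracing uniqueness back through the two rearrangements ``seems harder'' and is not needed. Your sketch of that uniqueness would not go through as written: Lemma \ref{lem:existence_of_symmetric_decreasing_competitor} provides no equality case for \eqref{eq:decreasing_through_symmetric_rearrangement} (the P\'olya--Szeg\H{o}-type step), so an arbitrary minimizer of $F_1$ cannot simply be ``retraced'' to an element of $\cW_0^h$ equal to $w^{\alpha(h)}$; the argument you borrow from the end of the proof of Lemma \ref{lem:estimate_L2_norm} relied on the extra rigidity \eqref{eq:conservation_of_inf_sup_symm_rearr} together with the saturated Lipschitz bound, which has no analogue here. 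Since the statement does not require uniqueness, this does not affect the validity of your proof of the lemma itself.
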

\begin{proof} Let $w\in X_h$ and $l:=L/2$. 

\emph{First reduction:} By Lemma \ref{lem:existence_of_symmetric_decreasing_competitor} there exists $\tilde{w}\in X_h$ even on $(-l,l)$ and non-increasing on $(0,l)$ with $F_1(\tilde{w})\leq F_1(w)$.
Hence, after a shift we have that the function $\tilde{w}(\cdot -l/2)_{|[-l/2,l/2]}$ belongs to $\cW^h:=\cW\cap\set{w:\norm{w}_{L^2(-l/2,l/2)}^2=h^2/2}$ and thus
\begin{align*}
2\inf_{w\in\cW^h}F_2(w)\leq \inf_{w\in X_h}F_1(w).
\end{align*}
Note that the inequality in the other direction easily holds true by constructing even elements in $X_h$ out of the elements from $\cW^h$. Therefore
\begin{align*}
2\inf_{w\in\cW^h}F_2(w)=\inf_{w\in X_h}F_1(w).
\end{align*}

\emph{Second reduction:} Now given $w\in \cW^h\setminus \cW_0^h$ we find via Lemma \ref{lem:application_of_odd_rearrangement} a function $\tilde{w}\in \cW_0^h$ with $F_2(\tilde{w})\leq F_2(w)$. This gives
\begin{align*}
\inf_{w\in\cW_0^h}F_2(w)\leq \inf_{w\in \cW_h}F_2(w). 
\end{align*}

\emph{Conclusion:} However, by Lemma \ref{lem:reduced_var_problem} we know that $\inf_{\cW_0^h}F_2$ is achieved in $w^{\alpha(h)}$ which belongs to $\cW^h$ as well. In combination with the first step we conclude
\[
\inf_{w\in X_h}F_1(w)=2\inf_{w\in\cW^h}F_2(w)=2\inf_{w\in\cW_0^h}F_2(w).
\]
Finally, by definition of $\bar{w}^h$ and Lemma \ref{lem:reduced_var_problem} it is easy to check that the infimum of $F_1$ is achieved at $\bar{w}^h$.
\end{proof}

\begin{remark} Contrary to the first variational problem solved in Lemma \ref{lem:estimate_L2_norm} it here seems harder to trace back the uniqueness (up to translations) of the reduced problem through the two rearrangements. However, it is not needed for the rest of the analysis.
\end{remark}

\begin{proof}[Proof of Lemma \ref{lem:supremum}]
Relation \eqref{eq:relation_supF_and_infF1} and Lemmas \ref{lem:reduced_var_problem}, \ref{lem:reduction_of_actual_variational_problem} imply
\begin{align*}
\sup_{w\in X_h}F(w)=h^2-2\inf_{w\in\cW_0^h}F_2(w)=h^2-h_{max}^2\left(1-\sqrt{1-\frac{h^2}{h_{max}^2}}\right)^2
\end{align*}
with $h_{max}^2=L^3/48$.
\end{proof}

\subsection{Analysis of the derived ODE}\label{sec:analysis_of_ode}
Setting again $h_{max}:=L^{\frac{3}{2}}/\sqrt{48}$ and summarizing Lemmas \ref{lem:abstract_infimum}, \ref{lem:supremum}, as well as Corollary \ref{cor:estimate_Hminus1_norm} we have shown the following estimate:

\begin{proposition}\label{prop:ode_estimate}
Let $(\rho,m)$ be a one-dimensional subsolution. Define $q(t)$ through $\norm{\rho(t,\cdot)}_{H^{-1}(\T)}=q(t)h_{max}$. Then $q(t)\in[0,1]$ and  for almost every $t\in(0,T)$ there holds
\begin{align}\label{eq:ode_inequaltiy}
\frac{d}{dt}q(t)^2\geq-2\left(\frac{E}{L^{d-1}}\right)^{\frac{1}{2}}h_{max}^{-1}\left(q(t)^2-\left(1-\sqrt{1-q(t)^2}\right)^2\right)^{\frac{1}{2}}.
\end{align}
\end{proposition}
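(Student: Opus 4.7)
The proof is essentially a substitution that combines the three preceding results, so I would proceed in a fully direct manner and flag the cosmetic rearrangement as the only step requiring care.

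First I would invoke Corollary \ref{cor:estimate_Hminus1_norm} applied to $\rho(t,\cdot)$, which by Definition \ref{def:one_dimensional_subsolution} \ref{itm:one_dim_subsol_i},\ref{itm:one_dim_subsol_ii} is essentially bounded by $1$ and has vanishing mean (the latter following from integrating the continuity equation against the constant test function and using the zero-mean condition on $\rho_0$ from \eqref{eq:rho_0_conditions}). This yields $\norm{\rho(t,\cdot)}_{H^{-1}(\T)}^2\leq L^3/48=h_{max}^2$, so $q(t)\in[0,1]$ for every $t\in[0,T)$.

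Next I would write $h(t):=\norm{\rho(t,\cdot)}_{H^{-1}(\T)}=q(t)h_{max}$ and apply Lemma \ref{lem:abstract_infimum}, which gives for almost every $t\in(0,T)$
\begin{align*}
h_{max}^2\,\frac{d}{dt}q(t)^2=\frac{d}{dt}h(t)^2\geq -2\left(\frac{E}{L^{d-1}}\right)^{\frac{1}{2}}\left(\sup_{w\in X_{h(t)}}F(w)\right)^{\frac{1}{2}}.
\end{align*}
Then I would substitute the explicit value of the supremum provided by Lemma \ref{lem:supremum}, rewritten in terms of $h_{max}$ as
\begin{align*}
\sup_{w\in X_{h(t)}}F(w)=h(t)^2-h_{max}^2\left(1-\sqrt{1-\frac{h(t)^2}{h_{max}^2}}\right)^2=h_{max}^2\left(q(t)^2-\left(1-\sqrt{1-q(t)^2}\right)^2\right).
\end{align*}

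Finally, dividing the resulting inequality by $h_{max}^2$ produces exactly \eqref{eq:ode_inequaltiy}. There is no genuine obstacle: the non-trivial analytic work has already been done in Lemmas \ref{lem:first_estimate_for_subsolutions}, \ref{lem:abstract_infimum}, \ref{lem:supremum} and Corollary \ref{cor:estimate_Hminus1_norm}. The only thing to double-check is the algebraic bookkeeping of the two factors of $h_{max}$ (one from the square on the left, one from the square root on the right), which accounts for the single negative power of $h_{max}$ appearing in the stated bound.
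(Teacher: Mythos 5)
Your proposal is correct and follows exactly the route the paper takes: the proposition is stated there as a summary of Lemma \ref{lem:abstract_infimum}, Lemma \ref{lem:supremum} and Corollary \ref{cor:estimate_Hminus1_norm}, and your substitution and bookkeeping of the $h_{max}$ factors is exactly what is needed. Your extra check that $\rho(t,\cdot)$ has zero mean (so that the corollary applies) is a harmless and correct addition.
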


\begin{proposition}\label{prop:bound_for_1D_subs}
Let $\rho_0$ be one-dimensional and set $q_0:=\norm{\rho_0}_{H^{-1}(\T)}/h_{max}$. Then
\begin{align}\label{eq:bound_for_Tmix1}
T_{mix}^{1sub}(\rho_0)\geq h_{max}\left(\frac{L^{d-1}}{E}\right)^{\frac{1}{2}}\sqrt{2}S\left(\alpha_0\right)
\end{align}
where
\begin{equation}\label{eq:definition_of_S}
S(\alpha):=\frac{1}{2}\left(\arcsin(\alpha)+\alpha\sqrt{1-\alpha^2}\right),\quad \alpha_0:=\left(1-\sqrt{1-q_0^2}\right)^{\frac{1}{2}}.
\end{equation}
\end{proposition}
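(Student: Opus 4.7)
The plan is to integrate the ODE inequality from Proposition~\ref{prop:ode_estimate}. Set $u(t) := q(t)^2\in[0,1]$, which is Lipschitz on $[0,T)$ by Lemma~\ref{lem:mixing_norm_Lipschitz}, continuous up to $t=T$, and satisfies $u(T)=0$ since $\norm{\rho(t,\cdot)}_{H^{-1}(\T)}\to 0$. Abbreviating $c := 2(E/L^{d-1})^{1/2}h_{max}^{-1}$ and $g(u) := u-(1-\sqrt{1-u})^2$, the inequality reads $\dot u(t)\geq -c\sqrt{g(u(t))}$ almost everywhere.

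The natural substitution is $u=\alpha^2(2-\alpha^2)$ with $\alpha=(1-\sqrt{1-u})^{1/2}\in[0,1]$, i.e.\ the map from \eqref{eq:definition_of_S}. A short computation gives $\sqrt{1-u}=1-\alpha^2$, $g(u)=2\alpha^2(1-\alpha^2)$, and $du=4\alpha(1-\alpha^2)\,d\alpha$. Defining the primitive $H(u):=\int_0^u ds/\sqrt{g(s)}$ and using $S'(\alpha)=\sqrt{1-\alpha^2}$, the change of variables produces
\begin{align*}
H(q_0^2)=\int_0^{\alpha_0}\frac{4\alpha(1-\alpha^2)}{\sqrt{2}\,\alpha\sqrt{1-\alpha^2}}\,d\alpha=2\sqrt{2}\int_0^{\alpha_0}\sqrt{1-\alpha^2}\,d\alpha=2\sqrt{2}\,S(\alpha_0).
\end{align*}

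Formally dividing the ODE inequality by $\sqrt{g(u(t))}$ and applying the chain rule gives $\frac{d}{dt}H(u(t))\geq -c$ a.e., so integrating over $[0,T]$ and using $H(u(T))=H(0)=0$ yields $H(q_0^2)\leq cT$. Combined with the identity above, this rearranges to exactly \eqref{eq:bound_for_Tmix1} for the subsolution at hand; taking the infimum over all perfectly mixing one-dimensional subsolutions then gives the stated bound on $T_{mix}^{1sub}(\rho_0)$.

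The one delicate point, and what I expect to be the main obstacle, is the rigorous justification of the ``energy inequality'' $\frac{d}{dt}H(u(t))\geq -c$. Since $H'(u)=1/\sqrt{g(u)}$ blows up as $u\to 0$ and $u\to 1$, the composition $H\circ u$ is not automatically absolutely continuous even though $u$ is Lipschitz. My plan would be to truncate: for small $\varepsilon>0$ introduce the first hitting time $\tau_\varepsilon:=\inf\{t\in[0,T):u(t)=\varepsilon\}\in(0,T]$ and decompose $[0,\tau_\varepsilon]$ into the (at most countably many) maximal subintervals on which $u(t)\in[\varepsilon,1-\varepsilon]$. On each such subinterval $H$ is Lipschitz and hence $H\circ u$ is Lipschitz, so the chain-rule inequality integrates classically; telescoping these contributions via monotonicity of $H$ yields the desired bound up to an error controlled by $H(\varepsilon)$ and the behaviour near $u=1$. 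Since both singularities of $H'$ are integrable (behaving like $s^{-1/2}$ near $0$ and $(1-s)^{-1/4}$ near $1$), passing $\varepsilon\to 0$ gives $H(q_0^2)\leq cT$. The case $q_0=1$, where an initial layer near $u=1$ requires more care, is precisely the sharp configuration analysed in Section~\ref{sec:sharpness}.
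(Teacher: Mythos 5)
Your argument is correct and essentially the paper's proof: your primitive $H(u)=\int_0^u ds/\sqrt{g(s)}$ is exactly $2\sqrt{2}\,S(\alpha)$ under the substitution $u=\alpha^2(2-\alpha^2)$, so integrating $\frac{d}{dt}H(u(t))\geq -c$ is the same as the paper's step of rewriting the inequality as $\sqrt{1-\alpha(t)^2}\,\dot\alpha(t)\geq -c$ and integrating $S(\alpha(t))$. Your truncation near $u=0$ and $u=1$ is the counterpart of the paper's reduction to $q(t)\in(0,1)$ together with local Lipschitz continuity of $\alpha$ on compact subintervals of $(0,T)$ and the limits $t_1\rightarrow 0$, $t_2\rightarrow T$.
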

\begin{proof} Let $(\rho,m)$ be a one-dimensional subsolution that perfectly mixes the initial distribution $\rho_0$ at time $T$. Define $q(t)$ as in Proposition \ref{prop:ode_estimate} and set 
\begin{align*}
c:=2^{-\frac{1}{2}}E^{\frac{1}{2}}L^{-\frac{d-1}{2}}h_{max}^{-1}.
\end{align*}
Without loss of generality we will assume that $q(t)\in (0,1)$ for $t\in(0,T)$. 

In order to bound $T$ from below it is more convenient to introduce
\begin{align*}
\alpha(t):=\left(1-\sqrt{1-q(t)^2}\right)^{\frac{1}{2}},
\end{align*}
such that
\begin{align*}
q(t)^2=\alpha(t)^2\left(2-\alpha(t)^2\right)
\end{align*}
Note that $\alpha:[0,T]\rightarrow [0,1]$ is continuous and by our assumption $\alpha_{|(0,T)}$ is locally Lipschitz continuous with values in $(0,1)$. 

We will now write inequality \eqref{eq:ode_inequaltiy} in terms of $\alpha(t)$. First of all one computes
\begin{align*}
\dot{\alpha}(t)=\frac{1}{4\alpha(t)(1-\alpha(t)^2)}\frac{d}{dt}q(t)^2
\end{align*}
and \eqref{eq:ode_inequaltiy} becomes
\begin{align*}
\frac{d}{dt}q(t)^2\geq -2\left(\frac{E}{L^{d-1}}\right)^\frac{1}{2}h_{max}^{-1}\alpha(t)\sqrt{2(1-\alpha(t)^2)}
\end{align*}
Hence \eqref{eq:ode_inequaltiy} is equivalent to
\begin{align*}
(1-\alpha(t)^2)^{\frac{1}{2}}\dot{\alpha}(t)\geq -c
\end{align*}
for almost every $t\in(0,T)$.

Integration gives
\[
S(\alpha(t_2))-S(\alpha(t_1))\geq -c(t_2-t_1),\quad 0<t_1<t_2<T,
\]
with $S:[0,1]\rightarrow[0,\pi/4]$ defined in \eqref{eq:definition_of_S}.
Letting $t_1\rightarrow 0$, $t_2\rightarrow T$ we deduce
\begin{align*}
T\geq \frac{S(\alpha_0)}{c}=h_{max}\left(\frac{L^{d-1}}{E}\right)^{\frac{1}{2}}\sqrt{2}S\left(\alpha_0\right).
\end{align*}
This shows the stated lower bound for $T_{mix}^{1sub}$.
\end{proof}

\begin{proof}[Proof of Theorem  \ref{thm:lower_bound_1D} and Theorem \ref{thm:weak_setting}.]
Recall that the $H^{-1}$-norms on $\T$ and $\T^d$ for any one-dimensional data are related by $\norm{\rho_0}_{H^{-1}(\T^d)}=L^{\frac{d-1}{2}}\norm{\rho_0}_{H^{-1}(\T)}$, cf. Lemma \ref{lem:properties_of_averaged_solutions} \ref{itm:averaged_iv}. Hence Lemma \ref{lem:estimate_L2_norm} implies 
\[
\norm{\rho_0}_{H^{-1}(\T^d)}\leq h_{max}L^{\frac{d-1}{2}}=\frac{L^{\frac{d+2}{2}}}{\sqrt{48}}=H_{max}
\] for any one-dimensional $\rho_0$. This shows the first part of Theorem \ref{thm:lower_bound_1D}. The other part will be a direct consequence of Theorem \ref{thm:weak_setting}.

Let $(\rho,v)$ be a distributional solution as in Theorem \ref{thm:weak_setting} emanating from $\rho_0$. By Lemma \ref{lem:existence_of_general_1D_subsolution} there exists a one-dimensional subsolution $(\tilde{\rho},m_1)$ with
\begin{align*}
\norm{\rho(t,\cdot)}_{H^{-1}(\T^d)}\geq L^{\frac{d-1}{2}}\norm{\tilde{\rho}(t,\cdot)}_{H^{-1}(\T)}
\end{align*}
for all $t\in[0,T)$. This inequality and Proposition \ref{prop:bound_for_1D_subs} imply that $\norm{\rho(t,\cdot)}_{H^{-1}(\T^d)}$ is positive for times
\begin{align*}
t<h_{max}\left(\frac{L^{d-1}}{E}\right)^{\frac{1}{2}}\sqrt{2}S\left(\alpha_0\right)=H_{max}E^{-\frac{1}{2}}\sqrt{2}S(\alpha_0).
\end{align*}
This finishes the proof of Theorem \ref{thm:weak_setting}.
\end{proof}

\section{Sharpness}\label{sec:sharpness}
We will now prove Theorem \ref{thm:sharpness_weak_setting}. In other words we give an example for which Theorem \ref{thm:weak_setting} is sharp. Let $\hat{\rho}_0\in L^\infty(\T)$ be the unique one-dimensional ``maximally unmixed'' state
\begin{align}\label{eq:maximally_unmixed_data}
\hat{\rho}_0(x)=\begin{cases}
1,&x\in(-L/2,0),\\
-1,&x\in (0,L/2).
\end{cases}
\end{align}
By ``maximally unmixed'' we mean that for this configuration $\norm{\hat{\rho}_0}_{H^{-1}(\T)}=h_{max}=L^{\frac{3}{2}}/\sqrt{48}$, cf. Lemma \ref{lem:estimate_L2_norm}. Proposition \ref{prop:bound_for_1D_subs} therefore states that 
\begin{equation}\label{eq:lower_bound_for_rhohat}
T_{mix}^{1sub}(\hat{\rho}_0)\geq h_{max}\left(\frac{L^{d-1}}{E}\right)^{\frac{1}{2}}\frac{\sqrt{2}\pi}{4}.
\end{equation}

The analysis is split into two parts. We construct an explicit one-dimensional subsolution emanating from $\hat{\rho}_0$ which shows equality in \eqref{eq:lower_bound_for_rhohat}. This way we gain a clear picture of the vertical averages of density, velocity, momentum and energy of any potentially optimal mixing solution emanating from $\hat{\rho}_0$.

In order to obtain actual solutions realizing the potentially optimal mixing time \eqref{eq:lower_bound_smooth_for_rho0_hat} one then has to design the velocity field $v$ such that the above averages are ``undone''. This is a canonical and not too difficult task for convex integration which we utilize in a second step to finish the proof of Theorem \ref{thm:sharpness_weak_setting}. Of course it would be also very  interesting to see if other iteration techniques allow to weakly approach the subsolution by solutions with more regular velocity fields.

\subsection{Step 1: Sharpness on averaged level}\label{sec:sharpness_on_averaged_level}

By our investigation any one-dimensional subsolution $(\rho,m_1)$ that would show sharpness in \eqref{eq:bound_for_Tmix1} necessarily has to satisfy a list of properties:
\begin{itemize}
\item Recalling Remark \ref{rem:sharpness_first_bound} about steepest descent subsolutions the averaged momentum $m_1$ has to be given by \eqref{eq:sharp_choice_of_m_1D}, \eqref{eq:lambda_of_t}. In other words $\rho$ has to solve the nonlocal conservation law \eqref{eq:1D_nonlocal_conservation_law}.
\item If $\varphi$ denotes as usual the potential of $\rho$, then $w(t,\cdot):=\varphi_x(t,\cdot)$ has to satisfy 
\[
F(w(t,\cdot))=\sup_{w\in X_{h(t)}}F(w),\quad h(t):=\norm{w(t,\cdot)}_{L^2(\T)},\quad t\in(0,T),
\]
cf. Section \ref{sec:variational_problem}.
\item By Section \ref{sec:analysis_of_ode} the function $q(t):=h(t)/h_{max}$ has to satisfy \eqref{eq:ode_inequaltiy} with equality.
\end{itemize}
In the case of $\hat{\rho}_0$ as our initial data it is possible to explicitly verify these conditions.

In order to state the one-dimensional subsolution let $l:=L/2$ and define for $\alpha\in[0,1]$ the function $W^\alpha\in \Lip(\T)$  by setting first
\begin{align*}
W^\alpha(x):=\begin{cases}
x,&0\leq x\leq \alpha^2\frac{l}{2},\\
\alpha\frac{l}{2}\left(1-\frac{\left(1-\frac{x}{l/2}\right)^2}{1-\alpha^2}\right)^{\frac{1}{2}},&\alpha^2\frac{l}{2}<x<l-\alpha^2l/2,\\
l-x,&l-\alpha^2l/2\leq x\leq l, 
\end{cases}
\end{align*}
for $x\in [0,l]$, then extending $W^\alpha$ in an odd way onto $[-l,l]$, and then $L$-periodically. The parameter $\alpha$ will depend on time according to 
\[
\alpha(t):=S^{-1}\left(\frac{\pi}{4}-ct\right),\quad c:=2^{-\frac{1}{2}}E^{\frac{1}{2}}L^{-\frac{d-1}{2}}h_{max}^{-1},
\]
where $S^{-1}:[0,\pi/4]\rightarrow [0,1]$ is the inverse of $S$ defined in \eqref{eq:definition_of_S}.

We then define
\begin{align*}
w(t,x):=W^{\alpha(t)}(x),\quad \rho(t,x):=w_x(t,x),\quad m_1(t,x):=\lambda(t)(1-\rho(t,x)^2)w(t,x),
\end{align*}
where the factor $\lambda(t)$ is defined according to \eqref{eq:lambda_of_t}, i.e.
\begin{align}\label{eq:definition_of_lambda_sharp_subsolution}
\lambda(t):=\left(\frac{E}{L^{d-1}}\right)^{\frac{1}{2}}\left(\int_{\T}\big(1-w_x(t,x)^2\big)w(t,x)^2\:dx\right)^{-\frac{1}{2}}.
\end{align}

\begin{proposition}\label{prop:example_with_sharpness}
The just stated pair $(\rho,m_1)$ is a well-defined one-dimensional subsolution for $\hat{\rho}_0(\cdot -L/4)$ with energy density
\begin{equation}\label{eq:definition_of_e_in_sharp_subsolution}
e(t,x):=\lambda(t)^2(1-\rho(t,x)^2)w(t,x)^2.
\end{equation}
Moreover, $\norm{\rho(t,\cdot)}_{H^{-1}(\T)}\rightarrow 0$ as $t\rightarrow h_{max}E^{-\frac{1}{2}}L^{\frac{d-1}{2}}\sqrt{2}\pi/4$. In particular there holds equality in \eqref{eq:lower_bound_for_rhohat}.
\end{proposition}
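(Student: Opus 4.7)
The plan is to verify in turn each of the four conditions of Definition \ref{def:one_dimensional_subsolution} for the pair $(\rho,m_1)$ with respect to the initial data $\hat{\rho}_0(\cdot-L/4)$, together with the claim that $\|\rho(t,\cdot)\|_{H^{-1}(\T)}\to 0$ at the claimed terminal time. I would organize the argument around a single algebraic identity for the profile $W^\alpha$ that makes the construction work.

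The first step is implicit differentiation of
\[
(1-\alpha^2)(W^\alpha)^2+\alpha^2(l/2-x)^2=\alpha^2(l/2)^2(1-\alpha^2)
\]
on the middle arc $(\alpha^2 l/2,\,l-\alpha^2 l/2)$, which produces $W^\alpha\,\partial_x W^\alpha=\alpha^2(l/2-x)/(1-\alpha^2)$, the bound $|\partial_x W^\alpha|\le 1$, and, most importantly, the identity
\[
\alpha\,\partial_\alpha W^\alpha=\bigl(1-(\partial_x W^\alpha)^2\bigr)\,W^\alpha.
\]
In the two outer regions where $W^\alpha(x)\in\{x,\,l-x\}$ both sides of the last identity vanish, so it extends to all of $[0,l]$ and, by odd-plus-$L$-periodic extension, to $\T$. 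From here, conditions \ref{itm:one_dim_subsol_i} and \ref{itm:one_dim_subsol_iv} of Definition \ref{def:one_dimensional_subsolution} are immediate: $|\rho|=|w_x|\le 1$, while $m_1^2=\lambda^2(1-\rho^2)^2 w^2=e(1-\rho^2)$ holds pointwise by the very definitions of $m_1$ and $e$. The matching of $\rho(0,\cdot)$ with $\hat{\rho}_0(\cdot-L/4)$ follows because $\alpha(0)=S^{-1}(\pi/4)=1$ collapses the middle arc, leaving a piecewise linear $W^1$ whose periodic-odd-extended derivative is exactly $\hat{\rho}_0(\cdot-L/4)$.

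Next I would verify the conservation law in \ref{itm:one_dim_subsol_ii}. Integrating $\partial_t\rho+\partial_x m_1=0$ in $x$ and using the common oddness of $\partial_t w$ and $m_1$ to fix the constant of integration reduces it to $\partial_t w=-m_1$, i.e.\ $\dot\alpha(t)\,\partial_\alpha W^{\alpha(t)}=-\lambda(t)(1-(\partial_x W^{\alpha(t)})^2)W^{\alpha(t)}$. By the displayed identity this is equivalent to the scalar relation $\dot\alpha(t)/\alpha(t)=-\lambda(t)$. A direct computation gives $S'(\alpha)=\sqrt{1-\alpha^2}$, so $\alpha(t)=S^{-1}(\pi/4-ct)$ satisfies $\dot\alpha(t)=-c/\sqrt{1-\alpha(t)^2}$, and the needed relation becomes $\lambda(t)=c/(\alpha(t)\sqrt{1-\alpha(t)^2})$. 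To check consistency with \eqref{eq:definition_of_lambda_sharp_subsolution} I would evaluate the integral explicitly by substituting $y=1-2x/l$ in the middle region, obtaining
\[
\int_\T(1-w_x^2)\,w^2\,dx=2h_{max}^2\,\alpha(t)^2\bigl(1-\alpha(t)^2\bigr);
\]
combined with $c^2=(E/L^{d-1})/(2h_{max}^2)$ this turns \eqref{eq:definition_of_lambda_sharp_subsolution} into precisely the formula for $\lambda(t)$ just derived. Condition \ref{itm:one_dim_subsol_iii} then holds with equality, since $\int_\T e\,dx=\lambda^2\int_\T(1-w_x^2)w^2\,dx=E/L^{d-1}$.

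For the mixing, the same substitution yields $\|w(t,\cdot)\|_{L^2(\T)}^2=h_{max}^2\,\alpha(t)^2(2-\alpha(t)^2)$, in agreement with the characterization of $\cW_0^{h}$-optimizers in Lemma \ref{lem:reduced_var_problem}. Hence $\|\rho(t,\cdot)\|_{H^{-1}(\T)}=\|w(t,\cdot)\|_{L^2(\T)}\to 0$ as $\alpha(t)\to 0$, which occurs precisely at $t=\pi/(4c)=h_{max}E^{-1/2}L^{(d-1)/2}\sqrt{2}\pi/4$; equality in \eqref{eq:lower_bound_for_rhohat} is then immediate. The main obstacle is the identity $\alpha\,\partial_\alpha W^\alpha=(1-(\partial_x W^\alpha)^2)W^\alpha$: everything else is elementary bookkeeping or an explicit integral, but without this identity the prescribed time-dependence of $\alpha$ could not simultaneously realize steepest descent, saturate the energy bound, and produce an admissible transport pair.
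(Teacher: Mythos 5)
Your proposal is correct and follows essentially the same route as the paper: verify the four conditions of Definition \ref{def:one_dimensional_subsolution} directly, reduce the conservation law to the scalar relation $\dot\alpha/\alpha=-\lambda(t)$, and identify $\lambda(t)=c/\bigl(\alpha(t)\sqrt{1-\alpha(t)^2}\bigr)$ with the terminal time $\pi/(4c)$. The only (harmless) deviations are that your identity $\alpha\,\partial_\alpha W^\alpha=\bigl(1-(\partial_xW^\alpha)^2\bigr)W^\alpha$ is precisely the paper's observation that $(t,x)\mapsto W^{e^{-t}}(x)$ solves the autonomous Hamilton--Jacobi equation, here proved directly from the implicit equation for the profile, and that you evaluate $\int_{\T}(1-w_x^2)w^2\,dx=2h_{max}^2\alpha^2(1-\alpha^2)$ by explicit integration instead of invoking the variational Lemmas \ref{lem:supremum}, \ref{lem:reduced_var_problem}, \ref{lem:reduction_of_actual_variational_problem}, which makes this step slightly more self-contained.
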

\begin{proof} First of all we observe that as long as $\alpha<1$ not only $W^\alpha$ is Lipschitz, but also the derivative $W^\alpha_x$, which on $(\alpha^2l/2,l-\alpha^2l/2)$ is given by
\[
W^\alpha_x(x)=\alpha\left(1-\frac{\left(1-\frac{x}{l/2}\right)^2}{1-\alpha^2}\right)^{-\frac{1}{2}}\frac{1-\frac{x}{l/2}}{1-\alpha^2},
\]
cf. \eqref{eq:derivative_of_walpha}. It is easy to check that $\norm{W^\alpha_x}_{L^\infty(\T)}\leq 1$ for all $\alpha\in[0,1]$.

Next note that $S$ is indeed invertible on $[0,1]$ due to the fact that $S'(\alpha)=\sqrt{1-\alpha^2}$. As a consequence $\alpha$ is well-defined as a function in $\cC^0([0,T])\cap \cC^1(0,T)$ for $T:=\pi/(4c)$ and there holds  $\alpha(0)=1$ and $\alpha(T)=0$.
It follows that $\rho\in L^\infty((0,T)\times\T)$ with $\norm{\rho}_{L^\infty((0,T)\times\T)}\leq 1$, i.e. property \ref{itm:one_dim_subsol_i} of Definition \ref{def:one_dimensional_subsolution} holds true.

The definitions of $e$ in \eqref{eq:definition_of_e_in_sharp_subsolution} and $\lambda$ in \eqref{eq:definition_of_lambda_sharp_subsolution} directly imply property \ref{itm:one_dim_subsol_iii}. Also property \ref{itm:one_dim_subsol_iv} is immediate by the definition of $m_1$ and $e$. As a consequence we also have $m_1\in L^\infty(0,T;L^2(\T))$.

In order that $(\rho,m_1)$ is a one-dimensional subsolution emanating from $\hat{\rho}_0(\cdot-L/4)$ it thus only remains to verify property \ref{itm:one_dim_subsol_ii}, i.e. 
\begin{align}\label{eq:equation_for_rho_in_sharpness_proof}
\partial_t\rho +\lambda(t)\partial_x\big((1-\rho^2)w\big)=0,\quad \rho(0,\cdot)=\hat{\rho}_0(\cdot-L/4)
\end{align}
has to hold on $(0,T)\times\T$ in the sense of distributions. Since $\rho$ is continuous and piecewise smooth on $(0,T)\times\T$ and since $\rho(0,x)=W^{\alpha(0)}(x)=W^1(x)=\hat{\rho}_0(x-L/4)$ for almost every $x$, it is enough to verify \eqref{eq:equation_for_rho_in_sharpness_proof} pointwise on each region where $\rho$ is smooth. The equation trivially holds true on the space time regions where $W^{\alpha(t)}(x)$ is linear. By the odd symmetry it is thus enough to verify \eqref{eq:equation_for_rho_in_sharpness_proof} pointwise on the set $\Omega_\alpha:=\set{(t,x):\alpha(t)^2l/2<x<l-\alpha(t)^2l/2}$. Moreover, since by definition $\rho=w_x$, it is sufficient to show that $w$ solves the Hamilton-Jacobi equation
\begin{align}\label{eq:ham_jac_for_w}
\partial_tw+\lambda(t)(1-w_x^2)w=0
\end{align} 
on $\Omega_\alpha$.

In a first step towards \eqref{eq:ham_jac_for_w} straightforward computations show that $v(t,x):=W^{\beta(t)}(x)$ with $\beta(t):=e^{-t}$ is a solution of the rescaled Hamilton-Jacobi equation
\begin{align}\label{eq:ham_jac_for_v}
\partial_t v+\big(1-(v_x)^2\big)v=0
\end{align}
on the corresponding set $\Omega_\beta$. 

We then rewrite $w(t,x)=v(\tau(t),x)$ with $\tau(t):=-\log\alpha(t)$ such that \eqref{eq:ham_jac_for_v} on $\Omega_\beta$ implies
\begin{align}\label{eq:ham_jacobi_with_tau}
\partial_t w+\dot{\tau}(t)\big(1-(w_x)^2\big)w=0
\end{align}
on $\Omega_\alpha$.

Comparing with \eqref{eq:ham_jac_for_w} it therefore only remains to show that 
\begin{equation}\label{eq:tau_dot_equals_lambda}
\dot{\tau}(t)=\lambda(t).
\end{equation}
On one hand the definition of $\alpha(t)$ implies
\begin{align*}
\dot{\tau}(t)=-\frac{1}{\alpha(t)}\dot{\alpha}(t)=\frac{c}{\alpha(t)\sqrt{1-\alpha(t)^2}}.
\end{align*}
On the other hand we rewrite 
\[
\lambda(t)=\left(\frac{E}{L^{d-1}}\right)^{\frac{1}{2}}\left(\int_{\T}\big(1-w_x(t,x)^2\big)w(t,x)^2\:dx\right)^{-\frac{1}{2}}=\left(\frac{E}{L^{d-1}}\right)^{\frac{1}{2}}F(w(t,\cdot))^{-\frac{1}{2}}
\]
with $F$ defined in \eqref{eq:definition_of_F}. By Lemmas \ref{lem:supremum}, \ref{lem:reduced_var_problem}, \ref{lem:reduction_of_actual_variational_problem} we know that $w(t,\cdot)=W^{\alpha(t)}\in X_{h(t)}$ if and only if $h(t)=h_{max}\alpha(t)\sqrt{2-\alpha(t)^2}$ and that it maximizes $F$ in its class $X_{h(t)}$ with extremal value given by
\begin{align*}
\sup_{X_{h(t)}}F=h(t)^2-h_{max}^2\left(1-\sqrt{1-\frac{h(t)^2}{h_{max}^2}}\right)^2=2h_{max}^2\alpha(t)^2(1-\alpha(t)^2).
\end{align*}
We therefore conclude
\begin{align}\begin{split}\label{eq:reformulation_of_lambda}
\lambda(t)&=\left(\frac{E}{L^{d-1}}\right)^{\frac{1}{2}}F(w(t,\cdot))^{-\frac{1}{2}}=\frac{E^{\frac{1}{2}}}{\sqrt{2}L^{\frac{d-1}{2}}h_{max}}\frac{1}{\alpha(t)\sqrt{1-\alpha(t)^2}}\\
&=\frac{c}{\alpha(t)\sqrt{1-\alpha(t)^2}}.
\end{split}
\end{align}
This shows \eqref{eq:tau_dot_equals_lambda}. Hence $(\rho,m_1)$ is a one-dimensional subsolution for $\hat{\rho}_0(\cdot-L/4)$.

Finally, one observes that $\norm{\rho(t,\cdot)}_{H^{-1}(\T)}\rightarrow 0$ as $t\rightarrow T$ because $W^{\alpha(T)}=W^{0}=0$, and that
\[
T=\frac{\pi}{4c}=h_{max}\left(\frac{L^{d-1}}{E}\right)^{\frac{1}{2}}\frac{\sqrt{2}\pi}{4}.
\]
Thus $(\rho,m_1)$ is a perfectly mixing subsolution for $\hat{\rho}_0(\cdot-L/4)$ that realizes the lower bound \eqref{eq:lower_bound_for_rhohat} as the time of perfect mixing.
\end{proof}

\subsection{Step 2: Weak solutions close to the subsolution}\label{sec:convex_integration}
Before starting we remark that we can not directly apply the convex integration theorem stated in Appendix \ref{sec:convex_integration_appendix} to the sharp subsolution introduced in the previous subsection. One reason is that the energy $e$ becomes unbounded as $t\rightarrow 0$. Indeed, by \eqref{eq:reformulation_of_lambda} and $\alpha(0)=1$ there holds
\begin{align*}
\sup_x e(t,x)&=\lambda(t)^2\sup_x(1-W^{\alpha(t)}_x(x)^2)W^{\alpha(t)}(x)^2=\lambda(t)^2W^{\alpha(t)}(l/2)^2\\
&=\frac{c^2}{\alpha(t)^2(1-\alpha(t)^2)}\alpha(t)^2l^2/4\sim \frac{1}{1-\alpha(t)^2}\rightarrow\infty.
\end{align*}
Thus the subsolution leaves any of the compact sets $K_{e\leq \gamma}:=K\cap\set{e\leq \gamma}$, $\gamma>0$ for which convex integration is carried out in Theorem \ref{thm:convex_integration}. This can be circumpassed by successively convex integrating on sets approaching $t=0$.

Another reason is that the subsolution from Section \ref{sec:sharpness_on_averaged_level} is not a strict subsolution in the sense that it takes values in the boundary of the convex hull instead of its interior. For that we will introduce a small gap leading to a mixing time exceeding the optimal one by an arbitrarily small error.

\begin{proof}[Proof of Theorem \ref{thm:sharpness_weak_setting}.]
In order to introduce the small gap we recall that $W^\alpha$, $\alpha\in[0,1]$ is independent of $E$, whereas the time dependent function $\alpha(t)=S^{-1}(\pi/4-ct)$ depends on $E$ through $c=2^{-\frac{1}{2}}E^{\frac{1}{2}}L^{-\frac{d-1}{2}}h_{max}^{-1}$. We indicate this dependence by writing $\alpha_E(t)$. Similarly we write $\lambda_{E,w}(t)$ to indicate the dependencies of
\[
\lambda(t)=\left(\frac{E}{L^{d-1}}\right)^{\frac{1}{2}}\left(\int_{\T}\big(1-w_{x_1}(t,x_1)^2\big)w(t,x_1)^2\:dx_1\right)^{-\frac{1}{2}}.
\] 

Let now $\varepsilon\in(0,E/2)$ and set
\[
T_\varepsilon:=h_{max}\left(\frac{L^{d-1}}{E-2\varepsilon}\right)^{\frac{1}{2}}\frac{\sqrt{2}\pi}{4}.
\]
We define the tuple $(0,T_\varepsilon)\times\T^d\ni (t,x)\mapsto z(t,x)=(\rho,v,m,e)(t,x)\in\R^{1+d+d+1}$ by
\begin{align*}
w(t,x)&:=W^{\alpha_{E-2\varepsilon}(t)}(x_1),\\
\rho(t,x)&:=w_{x_1}(t,x),\\
v(t,x)&:=0,\\
m(t,x)&:=\lambda_{E-2\varepsilon,w}(t)(1-w_{x_1}(t,x)^2)w(t,x)e_1,\\
e(t,x)&:=\lambda_{E-\varepsilon,w}(t)^2(1-w_{x_1}(t,x)^2)w(t,x)^2.
\end{align*}

Then $\rho\in L^\infty((0,T_\varepsilon)\times \T^d)\cap \cC^0([0,T_\varepsilon];L^2_{weak}(\T^d))$, $m\in L^\infty(0,T_\varepsilon;L^2(\T^d;\R^d))$, $e\in L^\infty(0,T_\varepsilon;L^1(\T^d))$ with
\begin{align}\label{eq:integral_e_in_convex_integration_application}
\int_{\T^d}e(t,x)\:dx=L^{d-1}\int_{\T}e(t,x_1)\:dx_1=E-\varepsilon.
\end{align}
for all $t\in(0,T_\varepsilon)$. By Proposition \ref{prop:example_with_sharpness} we also know that $z$ satisfies \eqref{eq:linear_system_ci} on $(0,T_\varepsilon)\times\T^d$ with $\rho_0(x)=\hat{\rho}_0(x_1-L/4)$ in the sense of distributions.

The associated mixing zone $\mathscr{U}$ is given by
\[
\mathscr{U}:=\set{(t,x)\in(0,T)\times\T^d:\alpha_{E-2\varepsilon}(t)^2l/2<\abs{x_1}<l-\alpha_{E-2\varepsilon}(t)^2l/2}.
\]
Indeed outside $\mathscr{U}$ we have $\abs{\rho}=1$, $m=\rho v=0$ and $e=\abs{v}^2=0$, i.e. $z(t,x)\in K$, whereas inside $\mathscr{U}$ there holds $\abs{\rho}<1$ and 
\begin{align*}
\abs{m(t,x)}^2&=\lambda_{E-2\varepsilon,w}(t)^2(1-\rho(t,x)^2)^2w(t,x)^2\\
&<\lambda_{E-\varepsilon,w}(t)^2(1-\rho(t,x)^2)^2w(t,x)^2=e(t,x)(1-\rho(t,x)^2).
\end{align*} 
Thus $z(t,x)\in \text{int}(K^{co})$ for all $(t,x)\in\mathscr{U}$. Moreover, inside $\mathscr{U}$ by \eqref{eq:reformulation_of_lambda} there also holds
\begin{align*}
\frac{4\abs{m(t,x)}^2}{(1-\rho(t,x)^2)^2}&+e(t,x)<5\lambda_{E,w}(t)^2w(t,x)^2\leq 5\lambda_{E,w}(t)^2\alpha_{E-2\varepsilon}(t)^2l^2/4\\
&= \frac{5E}{4(E-2\varepsilon)}\lambda_{E-2\varepsilon,w}(t)^2\alpha_{E-2\varepsilon}(t)^2l^2=\frac{C}{1-\alpha_{E-2\varepsilon}(t)^2}=:\gamma(t)
\end{align*}
for a suitable constant $C=C(E,\varepsilon,L)>0$. Hence Lemma \ref{lem:convex_hull_refinement} implies $z(t,x)\in \text{int}(K^{co}_{e\leq \gamma(t)})$ for all $(t,x)\in \mathscr{U}$.

Since outside $\mathscr{U}$ there also holds $e(t,x)\leq \gamma(t)$, we have $z(t,x)\in K_{e\leq \gamma(t)}$, $(t,x)\notin\mathscr{U}$ as well.

We now apply Theorem \ref{thm:convex_integration} on increasing subsets of $\mathscr{U}$, cf. Remark \ref{rem:remarks_to_convex_integration_theorem} d). This has been done in a similar way for instance in \cite[Appendix B]{GHK_LAP}.
Let $t_j\searrow 0$ and define $\mathscr{U}_j:=\mathscr{U}\cap (t_j,T_\varepsilon)\times\T^d$. In the first step for instance we convex integrate on $\mathscr{U}_0$ and with the compact set $K_{e\leq \gamma(t_0)}$. Note here that $\gamma(t)$ is monotone and also that $e\in\cC^0([t_0/2,T_\varepsilon];L^2_{weak}(\T^d))$. Theorem \ref{thm:convex_integration}, see also Remarks \ref{rem:remarks_to_convex_integration_theorem} b), c), d), gives us infinitely many new strict subsolutions $z^{(1)}$ which coincide with $z$ outside $\mathscr{U}_0$, which now satisfy $\abs{\rho^{(1)}}=1$, $m^{(1)}=\rho^{(1)} v^{(1)}$, $\abs{v^{(1)}}^2=e^{(1)}$ almost everywhere on $\mathscr{U}_0$ and which satisfy $\norm{\rho^{(1)}(t,\cdot)}_{H^{-1}(\T^d)}\rightarrow 0$ as $t\rightarrow T_\varepsilon$. In view of \eqref{eq:integral_e_in_convex_integration_application} we can also pick the new subsolutions such that
\begin{align*}
\int_{\T^d}e^{(1)}(t,x)\:dx\leq \begin{cases}E-\varepsilon,& t\in(0,t_0],\\
E,&t\in(t_0,T_\varepsilon].
\end{cases}
\end{align*}

Inductively we continue to convex integrate on $\mathscr{U}_j\setminus \mathscr{U}_{j-1}$ to obtain (infinitely many) sequences $(z^{(j)})_j$ which take values in $K_{e\leq \gamma(t_{j-1})}$ almost everywhere on $(t_{j-1},T_\varepsilon)\times\T^d$ and that coincide with the original subsolution on $(0,t_{j-1})\times\T^d$. Their total energy satisfies 
\begin{align*}
\int_{\T^d}e^{(j)}(t,x)\:dx\leq E\text{ for all }t\in(0,T). 
\end{align*}
This uniform bound together with $\norm{\rho^{(j)}}_{L^\infty}\leq 1$ induces the uniform bounds $\norm{v^{(j)}}_{L^\infty_tL^2_x}\leq \sqrt{E}$, $\norm{m^{(j)}}_{L^\infty_tL^2_x}\leq \sqrt{E}$ which allow to find subsequences such that $\rho^{(j)}\rightharpoonup \tilde{\rho}$, $v^{(j)}\rightharpoonup \tilde{v}$, $m^{(j)}\rightharpoonup \tilde{m}$ weakly in $L^2((0,T_\varepsilon)\times\T^d)$. Since the convergence also holds true pointwise by the iterative nature of the construction, it is easy to see that the limiting pair $(\tilde{\rho},\tilde{v})$ is a distributional solution of \eqref{eq:equations_distributional} on $(0,T_\varepsilon)\times\T^d$ with initial data $\hat{\rho}_0(\cdot-L/4)$ and with $\norm{\tilde{v}}_{L^\infty(0,T_\varepsilon;L^2(\T^d))}\leq \sqrt{E}$, $\abs{\tilde{\rho}}=1$ almost everywhere on $(0,T_\varepsilon)\times\T^d$. By construction $\tilde{\rho}$ which coincides with $\rho^{(1)}$ near $\{T_\varepsilon\}\times\T^d$ is perfectly mixed at $T_\varepsilon$. Observing that $T_\varepsilon\searrow h_{max}L^{\frac{d-1}{2}}E^{-\frac{1}{2}}\sqrt{2}\pi/4=\hat{T}_0$ finishes the proof of Theorem \ref{thm:sharpness_weak_setting}.
\end{proof}

\appendix

\section{\texorpdfstring{Lipschitz continuity of the $H^{-1}$-norm}{Lipschitz continuity of the mixing norm}}\label{sec:appendix_lipschitz}

Let $\rho\in L^\infty((0,T)\times\T^d)$ and $m\in L^\infty(0,T; L^2(\T^d;\R^d))$ be a solution in the sense of distributions of
\begin{align}\label{eq:linear_equation_appendix}
    \partial_t\rho+\divv m=0,\quad \rho(0,\cdot)=\rho_0
\end{align}
for some initial data $\rho_0$ satisfying \eqref{eq:rho_0_conditions}. In view of \cite[Lemma 8]{DeLellis_Sz_2010} we will assume that $\rho\in\cC^0([0,T);L^2_w(\T^d))$, meaning that \[
[0,T)\ni t\mapsto \int_{\T^d}\rho(t,x)\psi(x)\:dx\in \R
\] 
is continuous for any fixed $\psi\in L^2(\T^d)$. In particular we have that
\begin{align*}
    \norm{\rho(t,\cdot)}_{H^{-1}(\T^d)}=\norm{\nabla \varphi(t,\cdot)}_{L^2(\T^d)},
\end{align*}
where $\varphi(t,\cdot)$ is defined through $\Delta \varphi(t,\cdot)=\rho(t,\cdot)$, is well-defined for all $t\in[0,T)$. For the definition of $\varphi$ note that $\fint \rho(t,\cdot)=0$ for all $t\in (0,T)$ due to $\fint\rho_0=0$ and  \eqref{eq:linear_equation_appendix}.
Also note that $\varphi\in L^\infty(0,T;W^{2,p}(\T^d))$ for any $p\in [1,\infty)$.

\begin{lemma}\label{lem:mixing_norm_Lipschitz}
    In the just described situation the $H^{-1}$-norm squared $\norm{\rho(t,\cdot)}_{H^{-1}(\T^d)}^2$ is Lipschitz continuous on $[0,T)$ and for almost every $t\in(0,T)$ there holds
    \begin{align}\label{eq:derivative_mixing_norm}
        \frac{d}{dt}\norm{\rho(t,\cdot)}_{H^{-1}(\T^d)}^2=-2\int_{\T^d}m(t,x)\cdot\nabla\varphi(t,x)\:dx.
    \end{align}
\end{lemma}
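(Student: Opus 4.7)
My strategy is to interpret $\rho$ as an absolutely continuous curve with values in $H^{-1}(\T^d)$ and invoke the chain rule for the squared Hilbert-space norm. Since $m\in L^\infty(0,T;L^2(\T^d;\R^d))$ one has $-\divv m\in L^\infty(0,T;H^{-1}(\T^d))$, and the distributional identity $\partial_t\rho=-\divv m$ then says that $\rho$, viewed as a map $[0,T)\to H^{-1}(\T^d)$, has a bounded distributional derivative; writing
\begin{equation*}
\rho(t_2)-\rho(t_1)=-\int_{t_1}^{t_2}\divv m(s)\,ds
\end{equation*}
as a Bochner integral in $H^{-1}(\T^d)$ shows it is Lipschitz. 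I would then identify this Lipschitz representative with the given one in $\cC^0([0,T);L^2_w(\T^d))$ at every $t$: both representatives produce the same continuous function $t\mapsto\int_{\T^d}\rho(t,x)\psi(x)\,dx$ for every smooth $\psi$, they coincide almost everywhere, and continuity-plus-a.e.-equality forces pointwise equality.

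With this upgrade in hand, the Lipschitz continuity of $t\mapsto\|\rho(t,\cdot)\|_{H^{-1}(\T^d)}$ is immediate because $\|\cdot\|_{H^{-1}}$ is $1$-Lipschitz on $H^{-1}$, and the squared norm is then also Lipschitz because $\|\rho(t,\cdot)\|_{H^{-1}}$ is uniformly bounded (from $\rho\in L^\infty_{t,x}$). For the derivative formula, the Hilbert-space chain rule yields
\begin{equation*}
\frac{d}{dt}\|\rho(t,\cdot)\|_{H^{-1}(\T^d)}^2=2\,\langle\rho(t,\cdot),\partial_t\rho(t,\cdot)\rangle_{H^{-1}(\T^d)}
\end{equation*}
for almost every $t\in(0,T)$. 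Substituting $\partial_t\rho=-\divv m$ and using the representation $\langle f,g\rangle_{H^{-1}}=\int_{\T^d}\nabla(\Delta^{-1}f)\cdot\nabla(\Delta^{-1}g)\,dx$ on mean-zero distributions, two integrations by parts reduce the inner product to $-\int_{\T^d}\nabla\varphi(t,x)\cdot m(t,x)\,dx$, which together with the factor $2$ gives exactly \eqref{eq:derivative_mixing_norm}.

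The main obstacle I expect is nothing deep, just careful bookkeeping to identify the two representatives of $\rho$ pointwise up to and including $t=0$: one needs that the Bochner representation is valid on all of $[0,T)$ and not just on an interior subinterval, for which the initial-value form of the distributional equation (as in Definition \ref{def:perfectly_mixing_velocity_fields}) is exactly the right input. As an entirely self-contained alternative one could instead extend $(\rho,m)$ to slightly negative times by $(\rho_0,0)$, verify via a short test-function computation that the extension still solves $\partial_t\rho+\divv m=0$ distributionally on $(-\delta,T)\times\T^d$, mollify in time to obtain smooth approximants with $\Delta\varphi^\varepsilon=\rho^\varepsilon$, check \eqref{eq:derivative_mixing_norm} pointwise for these approximants, and finally pass to the limit using strong $L^2_{\loc}$ convergence of mollifications of $L^\infty$ functions.
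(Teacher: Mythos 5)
Your argument is correct, but it follows a genuinely different route from the paper. You read the equation abstractly: since $\divv m\in L^\infty(0,T;H^{-1}(\T^d))$, the distributional identity $\partial_t\rho=-\divv m$ upgrades $\rho$ to a Lipschitz curve in $H^{-1}(\T^d)$ via the Bochner-integral representation $\rho(t_2)-\rho(t_1)=-\int_{t_1}^{t_2}\divv m(s)\,ds$, after which the Hilbert-space chain rule $\frac{d}{dt}\norm{\rho(t,\cdot)}_{H^{-1}}^2=2\ska{\rho(t,\cdot),\partial_t\rho(t,\cdot)}_{H^{-1}}$ at a.e.\ $t$ plus two integrations by parts gives \eqref{eq:derivative_mixing_norm}; the sign bookkeeping checks out, since $\ska{\rho,-\divv m}_{H^{-1}}=\int_{\T^d}\varphi\,\divv m\:dx=-\int_{\T^d}\nabla\varphi\cdot m\:dx$. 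The paper instead stays entirely at the level of the space-time weak formulation (following Fjordholm--Lanthaler--Mishra): it tests with two time-mollified functions built from the potential $\varphi$, kills the $\eta_\varepsilon'$-term by antisymmetry after integrating by parts in $x$ using $\rho=\Delta\varphi$, and passes to the limit using $\rho\in\cC^0([0,T);L^2_w)$ together with a Leray-projection rewriting of $\int m\cdot\nabla\varphi$, obtaining the weak derivative directly. Your route is shorter and more conceptual but leans on standard vector-valued machinery (Pettis measurability of $s\mapsto\divv m(s)$, the Bochner fundamental theorem of calculus and a.e.\ differentiability, identification of the Lipschitz $H^{-1}$-representative with the given weakly continuous one up to $t=0$ via the initial-term formulation) — all points you correctly flag and which are routine; the paper's route reproves exactly this content in a self-contained distributional way without invoking that machinery. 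Your alternative extension-by-$(\rho_0,0)$-to-negative-times plus time-mollification argument is also viable and is closer in spirit to the paper's mollification strategy, with the limit passage justified by weak-$L^2$ continuity in time and compactness of $L^2(\T^d)\hookrightarrow H^{-1}(\T^d)$.
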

\begin{proof} We follow \cite[Lemma 3.1]{Fjordholm_Lanthaler_Mishra_2017}.
    Let $(\eta_\varepsilon)_{\varepsilon>0}$ be a family of symmetric one-dimensional mollifiers and $\chi\in\cC^\infty_c([0,T))$. Consider $\psi_\varepsilon,\tilde\psi_\varepsilon:[0,T)\rightarrow \R$,
    \begin{align*}
        \psi_\varepsilon(t,x)&=\int_0^T\eta(t-s)\chi(s)\varphi(s,x)\:ds,\\
        \tilde{\psi}_\varepsilon(t,x)&=\chi(t)\int_0^T\eta_\varepsilon(t-s)\varphi(s,x)\:ds.
    \end{align*}
    For $\varepsilon>0$ small enough these functions are, after approximation with smooth functions, valid testfunctions for \eqref{eq:linear_equation_appendix}. Thus
    \begin{align*}
        \int_0^T&\int_{\T^d}\rho(\partial_t\psi_\varepsilon+\partial_t\tilde\psi_\varepsilon)\:dx\:dt=-\int_0^T\int_{\T^d}m\cdot(\nabla\psi_\varepsilon+\nabla\tilde{\psi}_\varepsilon)\:dx\:dt\\
        &\hspace{195pt}-\int_{\T^d}\rho_0(\psi_\varepsilon(0,\cdot)+\tilde\psi_\varepsilon(0,\cdot))\:dx\\
        &=-\int_0^T\int_0^T\int_{\T^d}m(t,x)\cdot\nabla\varphi(s,x)\eta_\varepsilon(t-s)(\chi(s)+\chi(t))\:dx\:ds\:dt\\
        &\hspace{45pt}-\int_0^T\int_{\T^d}\rho_0(x)\eta_\varepsilon(-s)\varphi(s,x)(\chi(s)+\chi(0))\:dx\:ds.
    \end{align*}
    On the other hand, computing the time derivatives one sees that 
    \begin{align*}
        \int_0^T\int_{\T^d}&\rho(\partial_t\psi_\varepsilon+\partial_t\tilde\psi_\varepsilon)\:dx\:dt=\int_0^T\int_0^T\int_{\T^d}\rho(t,x)\chi'(t)\eta_\varepsilon(t-s)\varphi(s,x)\:dx\:ds\:dt\\
        &+\int_0^T\int_0^T\int_{\T^d}\rho(t,x)\eta_\varepsilon'(t-s)(\chi(s)+\chi(t))\varphi(s,x)\:dx\:ds\:dt.
    \end{align*}
    Now $\rho(t,x)=\Delta\varphi(t,x)$ and partial integration in $x$ show that the latter term in fact vanishes due to the antisymmetry of $\eta_\varepsilon'$. Combininge the last two identities we therefore obtain
    \begin{align*}
        \int_0^T\int_0^T&\int_{\T^d}\rho(t,x)\varphi(s,x)\eta_\varepsilon(t-s)\chi'(t)\:dx\:ds\:dt\\
        &=-\int_0^T\int_0^T\int_{\T^d}m(t,x)\cdot\nabla\varphi(s,x)\eta_\varepsilon(t-s)(\chi(s)+\chi(t))\:dx\:ds\:dt\\
        &\hspace{45pt}-\int_0^T\int_{\T^d}\rho(s,x)\eta_\varepsilon(-s)\varphi(0,x)(\chi(s)+\chi(0))\:dx\:ds.
    \end{align*}
    
    Since $\rho\in\cC^0([0,T);L^2_w(\T^d))$ we can pass to the limit $\varepsilon\rightarrow 0$. This is clear for the left-hand side and the last term. For the first term on the right-hand side we use the Leray projection onto divergence-free vector fields $\P=\id-\nabla\Delta^{-1}\divv$ in order to write 
    \begin{align*}
    \int_{\T^d}m(t,\cdot)\cdot\nabla\varphi(s,\cdot)\:dx&=\int_{\T^d}(\id-\P)[m(t,\cdot)]\cdot\nabla\varphi(s,\cdot)\:dx\\
    &=-\int_{\T^d}\Delta^{-1}\divv m(t,\cdot)\rho(s,\cdot)\:dx
    \end{align*}
    which is continuous in $s$. Undoing this reformulation after taking the limit $\varepsilon\rightarrow 0$ and using again $\Delta\varphi=\rho$ together with partial integration we deduce
    \begin{align*}
        -\int_0^T\int_{\T^d}\abs{\nabla\varphi(t,x)}^2\chi'(t)\:dx\:dt=-2\int_0^T\int_{\T^d}m(t,x)&\cdot\nabla\varphi(t,x)\chi(t)\:dx\:dt\\
        &+\int_{\T^d}\abs{\nabla\varphi(0,x)}^2\chi(0)\:dx.
    \end{align*}
    Thus $\norm{\rho(t,\cdot)}_{H^{-1}(\T^d)}^2$ has the weak derivative stated in \eqref{eq:derivative_mixing_norm}. Finally recall that $m\in L^\infty(0,T;L^2(\T^d;\R^d))$ and $\varphi\in L^{\infty}(0,T;W^{2,2}(\T^d))$, which is (more than) enough to conclude the statement of the lemma.
\end{proof}

\section{A convex hull}\label{sec:appendix_convex_hull}

Recall that $K\subset\R\times\R^d\times\R^d\times\R$ appearing in \eqref{eq:definition_of_K} is defined by $(\rho,v,m,e)\in K$ if and only if
\begin{align*}
\abs{\rho}\leq 1,\quad m=\rho v,\quad \abs{v}^2=e.
\end{align*}
If $e$ would be considered as a fixed quantity instead of a variable, then $K$ and its convex hull computed in Lemma \ref{lem:convex_hull_appendix} essentially could be seen as a simplified case of the convex hulls computed in \cite{GK_Boussinesq,GKSz_RT} with Kolumb\'an, Kolumb\'an and Sz\'ekelyhidi resp., in the context of the inhomogeneous Euler equations.
\begin{lemma}\label{lem:convex_hull_appendix}
A point $(\rho,v,m,e)\in\R\times\R^d\times\R^d\times\R$ belongs to the closed convex hull of $K$ if and only if one of the following two conditions is satisfied
\begin{enumerate}[(i)]
\item\label{itm:convex_hull1} $\abs{\rho}=1$, $m=\rho v$ and $\abs{v}^2\leq e$,
\item\label{itm:convex_hull2} $\abs{\rho}<1$ and $\abs{m-\rho v}^2\leq (e-\abs{v}^2)(1-\rho^2)$.
\end{enumerate}
\end{lemma}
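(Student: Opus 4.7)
The plan is to prove the two inclusions separately. Denote by $C$ the set of tuples $(\rho,v,m,e)$ satisfying (i) or (ii).

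First I show $K\subseteq C$ and that $C$ is closed and convex, which together give $K^{co}\subseteq C$. The inclusion $K\subseteq C$ is immediate: if $\abs{\rho}=1$ the defining relations of $K$ give (i), and if $\abs{\rho}<1$ both sides of the inequality in (ii) vanish. For convexity I observe that $C$ admits the uniform description
\[
C=\set{(\rho,v,m,e):\abs{\rho}\le 1,\ e\ge h(\rho,v,m)},
\]
where $h(\rho,v,m):=\abs{v}^2+\abs{m-\rho v}^2/(1-\rho^2)$ on $\set{\abs{\rho}<1}$, extended to $\abs{\rho}=1$ as $\abs{v}^2$ if $m=\rho v$ and $+\infty$ otherwise. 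The linear substitution
\[
s:=1+\rho,\quad t:=1-\rho,\quad a:=v+m,\quad b:=v-m,\qquad s+t=2,
\]
transforms $h$ on the open slab $\abs{\rho}<1$ into the explicit form
\[
h=\frac{\abs{a}^2}{2s}+\frac{\abs{b}^2}{2t}.
\]
Each summand is the perspective of the convex function $\xi\mapsto\abs{\xi}^2/2$ and is therefore jointly convex in $(s,a)$ resp.\ $(t,b)$. Hence $h$ is convex in $(\rho,v,m)$ on $\set{\abs{\rho}<1}$, its epigraph is convex, and the closure of this epigraph is precisely $C$: along any sequence $(\rho_n,v_n,m_n,e_n)$ in the epigraph with $\rho_n\to\pm 1$ and bounded $e_n$, the boundedness of $h$ forces $m_n-\rho_n v_n\to 0$, yielding a limit point in case (i); conversely every point of $C$ with $\abs{\rho}=1$ is approached by $(\rho_n,v,\rho_n v,e)$. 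The closure of a convex set being convex, $C$ is closed and convex.

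For the opposite inclusion $C\subseteq K^{co}$ I argue in two sub-steps. Sub-step A: for a point $(\pm 1,v,\pm v,e)$ with $e\ge\abs{v}^2$, picking $w\in\R^d$ with $\abs{w}^2=e-\abs{v}^2$ yields
\[
(\pm 1,v,\pm v,e)=\tfrac{1}{2}\bigl(\pm 1,v+w,\pm(v+w),\abs{v+w}^2\bigr)+\tfrac{1}{2}\bigl(\pm 1,v-w,\pm(v-w),\abs{v-w}^2\bigr),
\]
both summands lying in $K$. Sub-step B: for $(\rho,v,m,e)$ satisfying (ii) with $\abs{\rho}<1$, set $\lambda:=(1+\rho)/2$ and $v_\pm:=(v\pm m)/(1\pm\rho)$; a direct computation gives
\[
\lambda\abs{v_+}^2+(1-\lambda)\abs{v_-}^2=\frac{\abs{v}^2+\abs{m}^2-2\rho\,v\cdot m}{1-\rho^2},
\]
and a short rearrangement shows that condition (ii) is precisely the bound of this expression by $e$. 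Choosing $e_\pm\ge\abs{v_\pm}^2$ with $\lambda e_++(1-\lambda)e_-=e$, the identity
\[
(\rho,v,m,e)=\lambda(1,v_+,v_+,e_+)+(1-\lambda)(-1,v_-,-v_-,e_-),
\]
combined with Sub-step A, exhibits $(\rho,v,m,e)$ as a convex combination of elements of $K^{co}$ and concludes the proof.

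The main obstacle is the convexity of $C$: without the perspective change of variables the quadratic inequality defining $C$ looks opaque, but once $h$ is recognized as a sum of two classical perspective functions the convexity becomes transparent. The decomposition in Sub-step B is then a direct exercise, and Lemma \ref{lem:convex_hull_with_zero_velocity} follows at once by specializing (ii) to $v_1=0$, which gives $m_1^2\le\abs{m-\rho v}^2\le(e-\abs{v}^2)(1-\rho^2)\le e(1-\rho^2)$.
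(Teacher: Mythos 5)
Your proof is correct, but it takes a genuinely different route from the paper's in both halves. For the inclusion $K^{co}\subseteq C$ the paper establishes convexity of the set defined by \ref{itm:convex_hull2} by writing $\abs{v}^2+\abs{m-\rho v}^2/(1-\rho^2)$ as the trace of the matrix $M_{\rho,v,m}$ and citing the convexity of $(\rho,v,m)\mapsto\xi^TM_{\rho,v,m}\xi$ from \cite{GK_Boussinesq}; your identity
\[
\abs{v}^2+\frac{\abs{m-\rho v}^2}{1-\rho^2}=\frac{\abs{v+m}^2}{2(1+\rho)}+\frac{\abs{v-m}^2}{2(1-\rho)}
\]
replaces that citation by a self-contained perspective-function argument, which is a nice simplification (the passage to the closure is handled the same way in both proofs, and you actually verify the closure identification more carefully than the paper does). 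For the reverse inclusion the paper argues by iteration of $\Lambda$-segments: it first shows that the equality set $K_1$ lies in $K^{co}$ via a segment whose endpoints at $s=1-\rho$ and $s=-1-\rho$ land in $K$, then reaches the interior of condition \ref{itm:convex_hull2} by a second segment in the $m$-direction, treating case \ref{itm:convex_hull1} separately. You instead exhibit every point of $C$ directly as a convex combination of at most four points of $K$ (splitting into the states $\rho=\pm1$ with weights $(1\pm\rho)/2$, then averaging $v\pm w$), which is more explicit and in fact shows $C=\conv(K)$, i.e.\ that the convex hull is already closed — slightly more than the statement asks. The trade-off is that the paper's segment construction is not wasted effort: exactly those segments, with the quantitative control of their endpoint energies, are reused in Lemma \ref{lem:convex_hull_refinement} to place states in $\text{int}(K^{co}_{e\leq\gamma})$, which is what the convex integration in Theorem \ref{thm:convex_integration} needs; your decomposition would have to be re-examined for that purpose. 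Your closing remark that Lemma \ref{lem:convex_hull_with_zero_velocity} follows by specializing \ref{itm:convex_hull2} should also mention the trivial case \ref{itm:convex_hull1}, where $v_1=0$ forces $m_1=\rho v_1=0$, as the paper does.
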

\begin{proof}
Let us denote the set defined through \ref{itm:convex_hull1}, \ref{itm:convex_hull2} by $C_1$, $C_2$ and set $C:=C_1\cup C_2$. Clearly $K\subset C$. In order to see that $C$ is a convex set we define for $v,m\in\R^d$, $\rho\in(-1,1)$ the symmetric matrix
\begin{align*}
M_{\rho,v,m}:=\frac{m\otimes m-\rho (v\otimes m+ m\otimes v)+v\otimes v}{1-\rho^2}\in\R^{d\times d}
\end{align*}
and observe that for an arbitrary orthonormal base $\xi_1,\ldots,\xi_d$ of $\R^d$ there holds
\begin{align*}
\frac{\abs{m-\rho v}^2}{1-\rho^2}+\abs{v}^2=\tr(M_{\rho,v,m})=\sum_{i=1}^d\xi_i^T M_{\rho,v,m}\xi_i.
\end{align*}
In \cite[Proof of Lemma 3.7]{GK_Boussinesq} it has been shown that the function
\begin{align*}
(-1,1)\times\R^d\times\R^d\ni(\rho,v,m)\mapsto \xi^TM_{\rho,v,m}\xi\in\R
\end{align*}
considered with an arbitrary fixed $\xi\in\R^d$, $\abs{\xi}=1$ is convex. Thus the set $C_2$ is convex, as is its closure $\overline{C}_2$. Since $\overline{C}_2=C_2\cup C_1=C$. We find that $K^{co}\subset C$.

It remains to show that $C\subset K^{co}$. Note that we can not argue by means of the Krein-Milman Theorem as $K$ is not a compact set. Instead we show the inclusion via iteration of segments. In a first step we claim that 
\[
K_1:=\set{(\rho,v,m,e)\in\R\times\R^d\times\R^d\times\R:\abs{\rho}<1,~\abs{m-\rho v}^2=(e-\abs{v}^2)(1-\rho^2)}
\]
is a subset of $K^{co}$. Indeed let $z:=(\rho,v,m,e)\in K_1$ and define $\hat{z}=(\hat{\rho},\hat{v},\hat{m},\hat{e})$ by setting
\begin{gather*}
\hat{\rho}:=1,\quad \hat{v}:=\frac{m-\rho v}{1-\rho^2},\quad
\hat{m}:=\frac{v-\rho m}{1-\rho^2},\quad \hat{e}:=\frac{\abs{m+v}^2}{(1-\rho^2)(1+\rho)}-\frac{e}{1-\rho}.
\end{gather*}
Straightforward computations show that $z+s\hat{z}\in K$ for $s=1-\rho>0$ and $s=-1-\rho<0$. Consequently $z\in K^{co}$.

Next, using a second order segment, only changing the $m$-component for instance, it follows that $C_2\subset K_1^{co}\subset K^{co}$. On the other hand $C_1\subset K^{co}$ is clear. Alltogether we have shown $C=K^{co}$.
\end{proof}

\begin{proof}[Proof of Lemma \ref{lem:convex_hull_with_zero_velocity}] Let $z\in K^{co}$ with $v_1=0$. If $\abs{\rho}=1$, then $m_1=\rho v_1=0$ and $m_1^2\leq e(1-\rho^2)$ trivially holds true. If $\abs{\rho}<1$, then
\begin{align*}
m_1^2&=(m_1-\rho v_1)^2\leq \abs{m-\rho v}^2+\abs{v}^2(1-\rho^2)\leq e(1-\rho^2).
\end{align*}
This proves \eqref{eq:special_hull_inequality}.
\end{proof}

\section{\texorpdfstring{$h$-principle for energy constrained mixing}{h-principle for energy constrained mixing}}\label{sec:convex_integration_appendix}
The following two statements are used in Section \ref{sec:convex_integration} for the construction of actual mixing solutions via convex integration. The first is a refinement of Lemma \ref{lem:convex_hull_appendix} giving a sufficient criterion under which certain states belong to the hull of a compact subset $K_{e\leq \gamma}$ of the unbounded set $K$. Thereafter we state the convex integration theorem. Note that its proof is just relying on the form of $K$ and the compactness of $K_{e\leq \gamma}$, in particular it does not require a full, in fact not even a partial, description of $K_{e\leq \gamma}^{co}$. 
\begin{lemma}\label{lem:convex_hull_refinement}
Let $\gamma>0$ and suppose that $(\rho,0,m,e)\in \text{int}(K^{co})$ with
\begin{align}\label{eq:extra_condition_bounded_hull}
\frac{4\abs{m}^2}{(1-\rho^2)^2}+e< \gamma,
\end{align}
then $z\in\text{int}(K^{co}_{e\leq\gamma})$ where $K_{e\leq\gamma}$ is defined as $K\cap \set{(\rho,v,m,e):e\leq \gamma}$.
\end{lemma}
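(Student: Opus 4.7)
\emph{Reformulating the hypothesis.} First, by Lemma~\ref{lem:convex_hull_appendix} the assumption $(\rho,0,m,e)\in\mathrm{int}(K^{co})$ is equivalent to $|\rho|<1$ and $|m|^2<e(1-\rho^2)$. The energy condition additionally gives $e<\gamma$ and $|m|^2<(1-\rho^2)^2(\gamma-e)/4$, and since $(1\pm\rho)^2\leq 4$ for $|\rho|<1$ this last bound implies the individual inequalities $|m|^2/(1\pm\rho)^2<\gamma$. These open, strict inequalities will drive the rest of the argument, and it is precisely the $(1\pm\rho)^2\leq 4$ translation that explains the factor $4$ in the hypothesis.

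\emph{Constructing a symmetric four-point decomposition.} Next, using $d\geq 2$, I pick vectors $U,U'$ in the hyperplane $m^\perp\subset\R^d$ and set $v_{1,2}:=m/(1+\rho)\pm U$, $w_{1,2}:=-m/(1-\rho)\pm U'$. With weights $\lambda_{1,2}:=(1+\rho)/4$, $\lambda_{3,4}:=(1-\rho)/4$, consider the four elements of $K$
\[
p_{1,2}:=(1,v_{1,2},v_{1,2},|v_{1,2}|^2),\qquad p_{3,4}:=(-1,w_{1,2},-w_{1,2},|w_{1,2}|^2).
\]
A direct computation (using $U,U'\perp m$) yields
\[
\sum_{i=1}^4\lambda_ip_i=\Bigl(\rho,\,0,\,m,\,\tfrac{|m|^2}{1-\rho^2}+\tfrac{1+\rho}{2}|U|^2+\tfrac{1-\rho}{2}|U'|^2\Bigr),
\]
so the decomposition reproduces $z$ exactly when $(1+\rho)|U|^2+(1-\rho)|U'|^2=2(e-|m|^2/(1-\rho^2))=:R$. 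The extreme-point energies are $|m|^2/(1+\rho)^2+|U|^2$ and $|m|^2/(1-\rho)^2+|U'|^2$, so all four $p_i$ lie in $K_{e\leq\gamma}$ precisely when $(|U|^2,|U'|^2)$ belongs to the open rectangle
\[
\mathscr R:=\bigl(0,\gamma-|m|^2/(1+\rho)^2\bigr)\times\bigl(0,\gamma-|m|^2/(1-\rho)^2\bigr),
\]
which is non-empty by the individual bounds above. A short check shows the affine constraint cuts an open segment out of $\mathscr R$ iff $|m|^2<e(1-\rho^2)$ (strict interior of $K^{co}$) and $e<\gamma$, both strict in our setting. Picking any $(|U|^2,|U'|^2)$ on this open segment writes $z$ as a convex combination of four points of $K_{e\leq\gamma}$.

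\emph{Passing to the interior.} Finally, to upgrade the conclusion to $z\in\mathrm{int}(K^{co}_{e\leq\gamma})$, I observe that every inequality used in the feasibility step above is open in $z$ and strict at the given $z$: namely $|\rho|<1$, $e<\gamma$, the interior condition $(e-|v|^2)(1-\rho^2)>|m-\rho v|^2$ for $K^{co}$, and the individual bounds $|\bar V|^2,|\bar W|^2<\gamma$ with $\bar V:=(v+m)/(1+\rho)$, $\bar W:=(v-m)/(1-\rho)$. These conditions therefore persist on a whole open neighborhood $\cU$ of $z$; repeating the construction at any $z'\in\cU$ (now allowing $v'\neq 0$, with $U\perp\bar V'$ and $U'\perp\bar W'$) realizes $z'$ as a convex combination of four points of $K_{e\leq\gamma}$. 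Hence $\cU\subset K^{co}_{e\leq\gamma}$, which is the claim of the lemma. The only delicate quantitative point in the whole proof is the $(1\pm\rho)^2\leq 4$ estimate from the first paragraph—everything else is elementary linear algebra—and this estimate is exactly what prevents any of the four extreme points from being pushed past energy $\gamma$.
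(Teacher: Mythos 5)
Your proof is correct, and at heart it is the paper's construction in unrolled form: the paper first moves from $z$ along the direction $(0,\hat v,\rho\hat v,0)$ with $\hat v\perp m$ to two points of the set $K_1=\{\abs{\rho}<1,\ \abs{m-\rho v}^2=(e-\abs{v}^2)(1-\rho^2)\}$, and then splits each of those along the segment from the proof of Lemma \ref{lem:convex_hull_appendix} into states with $\rho=\pm1$; composing these two segments produces exactly your four points $p_1,\dots,p_4$ with $U=U'=s\hat v$, $s^2=e-\abs{m}^2/(1-\rho^2)$, and the same weights $(1\pm\rho)/4$. What your version buys is a little extra freedom (the rectangle of admissible $(\abs{U}^2,\abs{U'}^2)$ instead of the single choice $U=U'$), at the price of redoing the feasibility bookkeeping that the paper gets from the identity $\abs{\tfrac{m\pm v}{1\pm\rho}}^2=e\mp\tfrac{2\rho(1\mp\rho)}{(1-\rho^2)^2}\abs{m}^2\leq e+\tfrac{4\abs{m}^2}{(1-\rho^2)^2}<\gamma$; both routes use the hypothesis \eqref{eq:extra_condition_bounded_hull} in the same way, via $(1\mp\rho)^2\leq 4$. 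The genuinely valuable difference is your last paragraph: the paper concludes interiority tersely, while you spell out that all constraints are strict and open and that the decomposition persists for nearby points with $v'\neq 0$ (replacing $m/(1\pm\rho)$ by $(v'\pm m')/(1\pm\rho')$ and choosing $U\perp\bar V'$, $U'\perp\bar W'$), which is exactly the argument needed to pass from $z\in K^{co}_{e\leq\gamma}$ to $z\in\mathrm{int}(K^{co}_{e\leq\gamma})$ in the full space $\R^{1+d+d+1}$.
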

\begin{proof}
We repeat the first segment of the proof of Lemma \ref{lem:convex_hull_appendix} and claim that all tuple $(\rho,v,m,e)\in K_1$ with \begin{align}\label{eq:gamma_convex_hull_condition1}
\abs{\frac{m+v}{1+\rho}}^2\leq \gamma,\quad \abs{\frac{m-v}{1-\rho}}^2\leq \gamma
\end{align}
belong to the closed convex hull of $K_{e\leq\gamma}$. Indeed, we had seen that the endpoints of the segment $z+s\hat{z}$, $s\in[-1-\rho,1-\rho]$ are in $K$. Thus they are in $K_{e\leq\gamma}$ provided $e+(1-\rho)\hat{e}\leq \gamma$ and $e-(1+\rho)\hat{e}\leq \gamma$. But quick computations show
\begin{align*}
e+(1-\rho)\hat{e}=\abs{\frac{m+v}{1+\rho}}^2,\quad e-(1+\rho)\hat{e}=\abs{\frac{m-v}{1-\rho}}^2.
\end{align*}

Let now $z_0=(\rho,0,m,e)$ be in the interior of $K^{co}$ which by Lemma \ref{lem:convex_hull_appendix} means $\abs{\rho}<1$ and $\abs{m}^2<e(1-\rho^2)$. We will construct a segment containing $z_0$ and with endpoints belonging to $K_1$ and in addition satisfying \eqref{eq:gamma_convex_hull_condition1} provided condition \eqref{eq:extra_condition_bounded_hull} holds true. Let $\hat{z}:=(0,\hat{v},\rho\hat{v},0)$ for some $\hat{v}\in \R^d$ with $\abs{\hat{v}}=1$, $\hat{v}\cdot m=0$. It is easy to check that $z_0+s\hat{z}\in K_1$ if and only if
\[
s^2=e-\frac{\abs{m}^2}{1-\rho^2}.
\]
Moreover, for those two $s$ there holds
\begin{align*}
\abs{\frac{m+s\hat{m}+s\hat{v}}{1+\rho}}^2&=\abs{\frac{m}{1+\rho}+s\hat{v}}^2=\abs{\frac{m}{1+\rho}}^2+s^2=e-\frac{2\rho(1-\rho)}{(1-\rho^2)^2}\abs{m}^2\\
&\leq e+\frac{4\abs{m}^2}{(1-\rho^2)^2}< \gamma
\end{align*}
by \eqref{eq:extra_condition_bounded_hull}. In a similar way
\begin{align*}
\abs{\frac{m+s\hat{m}-s\hat{v}}{1-\rho}}^2=e+\frac{2\rho(1+\rho)}{(1-\rho^2)^2}\abs{m}^2\leq e+\frac{4\abs{m}^2}{(1-\rho^2)^2}< \gamma.
\end{align*}
It follows that $z_0\in \text{int}(K^{co}_{e\leq\gamma})$.
\end{proof}

\begin{theorem}\label{thm:convex_integration}
Let $\gamma>0$, $T>0$ and $\rho_0\in L^\infty(\T^d)$ satisfying \eqref{eq:rho_0_conditions}. Suppose that $z:=(\rho,v,m,e)\in L^\infty((0,T)\times\T^d;\R^{1+d+d+1})$ with $\rho,e\in \cC^0([0,T];L^2_{weak}(\T^d))$ solves
\begin{align}\label{eq:linear_system_ci}
\begin{cases}
\partial_t\rho+\divv m=0,&\text{on }(0,T)\times\T^d,\\
\divv v=0,&\text{on }(0,T)\times\T^d,\\
\rho(0,\cdot)=\rho_0,&\text{on }\T^d,\\
\rho(T,\cdot)=0,&\text{on }\T^d
\end{cases}
\end{align}
in the sense of distributions. Suppose further that there exists $\mathscr{U}\subset (0,T)\times\T^d$ open with $z_{|\mathscr{U}}$ continuous, $z(t,x)\in \text{int}(K^{co}_{e\leq\gamma})$ for all $(t,x)\in\mathscr{U}$ and with $z(t,x)\in K_{e\leq \gamma}$ for almost every $(t,x)\in(0,T)\times\T^d\setminus \mathscr{U}$. Then there exist infinitely many tuple $z_{sol}=(\rho_{sol},v_{sol},m_{sol},e_{sol})\in L^\infty((0,T)\times\T^d;\R^{1+d+d+1})$ with $\rho_{sol},e_{sol}\in \cC^0([0,T];L^2_{weak}(\T^d))$ solving \eqref{eq:linear_system_ci} distributionally and satisfying $z_{sol}=z$ almost everywhere on $(0,T)\times\T^d\setminus\mathscr{U}$, $z_{sol}\in K_{e\leq \gamma}$ almost everywhere on $\mathscr{U}$.
Moreover, infinitely many of the above solutions $z_{sol}$ can be found arbitrarily close to $z$ with respect to the (metrizable) topology of
$\cC^0([0,T];L^2_{weak}(\T^d))\times L^2_{weak}((0,T)\times\T^d;\R^{d+d})\times \cC^0([0,T];L^2_{weak}(\T^d))$.
\end{theorem}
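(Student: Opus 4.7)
The plan is to apply the Baire category / Tartar framework for convex integration, as developed in \cite{DeLellis_Sz_2010} and used in similar contexts in \cite{GHK_LAP,GKSz_RT}. First I would introduce the space of \emph{strict subsolutions} $X_0$, consisting of all $\tilde z\in L^\infty((0,T)\times\T^d;\R^{1+d+d+1})$ that solve \eqref{eq:linear_system_ci} distributionally, coincide with $z$ outside $\mathscr{U}$, are continuous on $\mathscr{U}$, and satisfy $\tilde z(t,x)\in\mathrm{int}(K^{co}_{e\leq\gamma})$ for every $(t,x)\in\mathscr{U}$. Note $z\in X_0\neq\emptyset$. Since $K^{co}_{e\leq\gamma}$ is bounded, $X_0$ lies in a closed ball $Y$ of $L^\infty((0,T)\times\T^d)$ on which the weak topology from the statement is metrizable by some $d_Y$; let $X$ denote the closure of $X_0$ in $(Y,d_Y)$. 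Then $(X,d_Y)$ is a complete metric space, and every element of $X$ still solves \eqref{eq:linear_system_ci} and takes values in $K^{co}_{e\leq\gamma}$ almost everywhere, since these constraints are linear and convex.

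The natural excitation functional is the \emph{mixing deficit}
\[
J(\tilde z):=\iint_{\mathscr{U}}\bigl(1-\tilde\rho^2\bigr)+\bigl(\tilde e-|\tilde v|^2\bigr)\,dx\,dt,
\]
whose integrands are concave in $\tilde z$, non-negative on $K^{co}_{e\leq\gamma}$ by Lemma \ref{lem:convex_hull_appendix}, and simultaneously zero precisely on $K_{e\leq\gamma}$ (when $|\tilde\rho|=1$ the hull characterization forces $\tilde m=\tilde\rho\tilde v$, so adding $|\tilde v|^2=\tilde e$ puts $\tilde z$ in $K_{e\leq\gamma}$). Weak lower semicontinuity of $\int\tilde\rho^2$ and $\int|\tilde v|^2$ together with weak continuity of $\int\tilde e$ make $J$ upper semicontinuous and bounded on $(X,d_Y)$, so by the classical Baire argument for u.s.c.\ bounded functions on a complete metric space the set of continuity points of $J$ is a dense $G_\delta$ in $X$. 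Granted the \emph{perturbation property}---that every $\tilde z\in X_0$ admits approximants $\tilde z_k\in X_0$ with $d_Y(\tilde z_k,\tilde z)\to 0$ but $\limsup_k J(\tilde z_k)\leq J(\tilde z)-\beta(J(\tilde z))$ for some non-decreasing $\beta$ with $\beta(s)>0$ for $s>0$---every continuity point $\tilde z_\infty\in X$ is forced to satisfy $J(\tilde z_\infty)=0$ and hence delivers the desired solution $z_{sol}=\tilde z_\infty$; density of continuity points in $(X,d_Y)$ then furnishes infinitely many such $z_{sol}$ arbitrarily close to $z$ in the prescribed weak topology.

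The perturbation property rests on two standard ingredients. The first is the identification of the \emph{wave cone} of the linear system,
\[
\Lambda=\set{(\bar\rho,\bar v,\bar m,\bar e)\in\R\times\R^d\times\R^d\times\R:\exists\,(\tau,\xi)\neq 0,\ \tau\bar\rho+\xi\cdot\bar m=0,\ \xi\cdot\bar v=0},
\]
along which $\Lambda$-valued plane waves solve \eqref{eq:linear_system_ci} and can be localized into small balls inside $\mathscr{U}$ through compactly supported potentials built from $\Delta^{-1}$ and cutoffs. The second, and main, ingredient is a \emph{quantitative $\Lambda$-segment lemma}: for every $z_0\in\mathrm{int}(K^{co}_{e\leq\gamma})$ there exists $\bar z\in\Lambda$ with $z_0\pm\bar z\in K^{co}_{e\leq\gamma}$ and $|\bar z|^2$ bounded below by a positive continuous function of the pointwise deficit $(1-\rho_0^2)+(e_0-|v_0|^2)$. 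Strict concavity of the integrand of $J$ turns oscillation of amplitude $|\bar z|$ into a definite drop of $J$ of order $|\bar z|^2$, while high frequency renders the same oscillation $d_Y$-small; this produces the approximants $\tilde z_k$ and yields the required $\beta$.

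The expected main obstacle is the quantitative $\Lambda$-segment lemma: one needs the chosen perturbation to simultaneously lie in $\Lambda$ \emph{and} to have length controlled below by the defect. A key observation facilitating it is that the perturbation directions constructed in the proofs of Lemmas \ref{lem:convex_hull_appendix}, \ref{lem:convex_hull_refinement}, of the form $\hat z=(0,\hat v,\rho_0\hat v,0)$ together with its analogue in the $\rho$-component, already lie in $\Lambda$: picking $\tau=0$ and any $\xi\in\R^d\setminus\{0\}$ with $\xi\perp\hat v$ (which is possible since $d\geq 2$) satisfies both wave-cone relations simultaneously. A careful bookkeeping of the segment lengths through those arguments, combined with a two-step reduction that separately addresses the $1-\rho^2$ and $e-|v|^2$ parts of the deficit, then delivers the quantitative lower bound on $|\bar z|$ and completes the proof.
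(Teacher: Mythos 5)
Your proposal follows the same overall route as the paper: the Baire category argument in the Tartar framework, with the same space $X_0$ of strict subsolutions agreeing with $z$ outside $\mathscr{U}$, its closure $X$ in the metrizable weak topology, the same wave cone $\Lambda=\set{(\bar\rho,\bar v,\bar m,\bar e):\exists(\tau,\xi)\neq 0,\ \tau\bar\rho+\xi\cdot\bar m=0,\ \xi\cdot\bar v=0}$, localized plane waves, and residuality of the set of points of $X$ taking values in $K_{e\leq\gamma}$. The paper's own proof is likewise only a sketch that verifies the standard properties (H1)--(H3) and cites \cite{DeLellis_Sz_2010,GK_Boussinesq,GKSz_RT} for the machinery.

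The one substantive difference is how the perturbation property is obtained, and this is also where your plan has a gap. The paper does not prove any quantitative $\Lambda$-segment lemma: it invokes the known fact that (H2) holds automatically once $z_2-z_1\in\Lambda$ for all distinct $z_1,z_2\in K_{e\leq\gamma}$, which is immediate for $d\geq 3$ (where $\Lambda$ is everything) and an easy check for $d=2$ (if $\hat\rho=0$ then $\hat m=\rho\hat v$ is parallel to $\hat v$). Your plan instead defers a quantitative segment lemma, and as stated it is not quite the right one: since the integrand of $J$ is affine in $(m,e)$, a high-frequency oscillation along $\bar z$ lowers $J$ only through the $(\bar\rho,\bar v)$-components, so a lower bound on $\abs{\bar z}^2$ in terms of the pointwise deficit is insufficient --- you need a lower bound on $\abs{\bar\rho}^2+\abs{\bar v}^2$. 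Moreover the specific direction you highlight, $(0,\hat v,\rho_0\hat v,0)$, cannot do this alone: membership of both endpoints in $K^{co}_{e\leq\gamma}$ forces $\abs{\hat v}^2\leq e_0-\abs{v_0}^2$, which can be arbitrarily small even when the total deficit $(1-\rho_0^2)+(e_0-\abs{v_0}^2)$ is of order one, so the $\rho$-type directions must genuinely enter and the two contributions must be balanced (e.g.\ near points with $\abs{m_0-\rho_0 v_0}^2$ close to $(e_0-\abs{v_0}^2)(1-\rho_0^2)$ one has to move $\rho$ and $v$ simultaneously so that $\rho v$ tracks $m$). This is doable, but it is exactly the work the paper's shortcut avoids; either carry it out carefully or replace it by the paper's argument that the largeness of $\Lambda$ relative to $K_{e\leq\gamma}$ yields (H2) directly from the cited literature.
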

\begin{remark}\label{rem:remarks_to_convex_integration_theorem}
a) We call any tuple $z$ as above a strict subsolution for $\rho_0$ and the set $\mathscr{U}$ the mixing zone of $z$.

b) The induced solutions in particular satisfy
\begin{align*}
\norm{\rho_{sol}(t,\cdot)}_{H^{-1}(\T^d)}\rightarrow 0\text{ as }t\rightarrow T.
\end{align*}

c) By the closeness of the $e$-component in $\cC^0([0,T];L^2_{weak}(\T^d))$ one in particular finds for any $\varepsilon>0$ infinitely many solutions $z_{sol}$ satisfying
\begin{align*}
\int_{\T^d}\abs{v_{sol}(t,x)}^2\:dx=\int_{\T^d}e_{sol}(t,x)\:dx\leq \int_{\T^d}e(t,x)\:dx+\varepsilon
\end{align*}
for all $t\in[0,T]$.

d) By modifying the definition of $X_0$ in the proof below one can also convex integrate not on all of $\mathscr{U}$, but instead on an open subset $\mathscr{V}$ of $\mathscr{U}$. The result are infinitely many strict subsolutions with mixing zone $\mathscr{U}\setminus\mathscr{V}$ where they still coincide with the original subsolution. 
\end{remark}

\begin{proof}[Proof of Theorem \ref{thm:convex_integration}.] The proof is relying on convex integration via a Baire category argument in the Tartar framework which is by now rather standard and therefore only sketched on a rough level, see \cite{DeLellis_Sz_2009,DeLellis_Sz_2010} for the origins in fluid dynamics and 
\cite{Castro_Faraco_Mengual_degraded,Choffrut_Sz_stationary,Cordoba_Faraco_Gancedo_lack_of_uniqueness,
Crippa_Gusev_Spirito_Wiedemann,Faraco_Lindberg_Sz_bounded_mhd,Faraco_Lindberg_Sz_second_paper,GK_EE,GK_Boussinesq,GKSz_RT,
Markfelder_EE,Markfelder_thesis,Mengual_different_mobilities,Sz_ipm} for extensions and applications in various fluid equations, see also \cite{Sattig_Sz_baire_category} for a Baire category argument outside the Tartar framework.  Especially also since the here considered differential inclusion is simpler than the ones for the inhomogeneous Euler equations \cite{GK_Boussinesq,GKSz_RT}.

The Baire category argument is usually carried out in the Tartar framework by verifying three properties (H1)-(H3) for the differential inclusion, see \cite[Appendix]{Sz_ipm}.

Property (H1) addresses the existence of localized plane waves as perturbation elements. In our case this is simpler as in the Euler case \cite{GK_Boussinesq,GKSz_RT} and therefore omitted. We only point out that one can upgrade the convergence for the $\rho$ and $e$ components from weak-$L^2_{t,x}$ convergence to uniform in time weak-$L^2_x$ convergence, see for example \cite[Remark 3.3]{GK_Boussinesq}, but of course also \cite{DeLellis_Sz_2010}.

Property (H2) is a geometric perturbation lemma allowing to perturb along sufficiently long segments. It is known \cite{DeLellis_Sz_2010,GK_Boussinesq,GKSz_RT} that this is automatically satisfied provided the wave cone $\Lambda$ is large enough with respect to the set of constraints, meaning that (H2) holds true under the condition that for all $z_1,z_2\in K_{e\leq \gamma}$, $z_1\neq z_2$ there holds $z_2-z_1\in \Lambda$. In our case we have
\begin{align*}
\Lambda:=\set{\hat{z}=(\hat{\rho},\hat{v},\hat{m},\hat{e}):\exists (\xi,c)\in\R^d\times\R\setminus\{0\},~c\hat{\rho}+\xi\cdot\hat{m}=0,~\xi\cdot \hat{v}=0}.
\end{align*}
Thus in dimension $d\geq 3$ there holds $\Lambda=\R^{1+d+d+1}$ and for $d=2$ there holds 
\[
\Lambda=\R^{1+2+2+1}\setminus\set{\hat{z}:\hat{\rho}=0,~\hat{v}\text{ not parallel to }\hat{m}}.
\]
But also in the 2D-case it is easy to verify that $z_1,z_2\in K_{e\leq \gamma}$, $z_1\neq z_2$ implies $z_2-z_1\in \Lambda$. Thus (H2) holds true.

Finally (H3) addresses the metrizability of the weak $L^2$-topology. More precisely, let $z$ denote the strict subsolution of Theorem \ref{thm:convex_integration} and define the set $X_0$ as the set of all strict subsolutions that have the same mixing zone $\mathscr{U}$ as $z$ and that coincide with $z$ outside $\mathscr{U}$. Since by definition all these subsolutions take values in the closed convex hull of $K_{e\leq \gamma}$ which is a compact set in $\R^{1+d+d+1}$, the set $X_0$ is bounded in $\cC^0_tL^2_x\times L^2_{t,x}\times L^2_{t,x}\times \cC^0_tL^2_x$. Thus the corresponding weak topology $\cC^0_tL^2_{x,weak}\times L^2_{t,x,weak}\times L^2_{t,x,weak}\times \cC^0_tL^2_{x,weak}$ is metrizable by a metric $d$. This is (H3).

Let $X$ denote the closure of $X_0$ with respect to $d$. The three properties allow to conclude that the subset $\set{\tilde{z}\in X:\tilde{z}(t,x)\in K_{\gamma}\text{ for almost every }(t,x)\in \mathscr{U}}$ is residual in the metric space $(X,d)$.
\end{proof}

\bibliographystyle{abbrv}
\bibliography{optimal_mixing}

\vspace{5pt}
\noindent\textbf{Acknowledgements.} Funded by the Deutsche Forschungsgemeinschaft (DFG, German Research Foundation) under Germany's Excellence Strategy EXC 2044-390685587, Mathematics Münster: Dynamics-Geometry-Structure.
In parts the research has also been carried out at Universidad Aut\'onoma de Madrid and Max Planck Institute for Mathematics in the Sciences, where the author acknowledges the support through Mar\'ia Zambrano Grant CA6/RSUE/2022-00097 and the great hospitality of MPI Leipzig.
The author also would like to thank Christian Seis and \'Angel Castro for fruitful discussions on the paper.

\vspace{10pt}

\noindent Bj\"orn Gebhard\\
Mathematics M\"unster, University of M\"unster, 48149 Münster, Germany\\
bjoern.gebhard@uni-muenster.de

\end{document}